\renewcommand{\subset}{\subseteq}
\newtheorem{theorem}            {Theorem}[section]
\newtheorem{corollary}          [theorem]{Corollary}
\newtheorem{proposition}        [theorem]{Proposition}
\newtheorem{lemma}              [theorem]{Lemma}
\newtheorem{remark}             [theorem]{Remark}
\newcommand{\OS}[2]{\mathcal M_{{#2},{#1}}}
\newcommand{\OB}[1]{{\mathcal R_{#1}}}
\def\beq{ \begin{equation}}
\def\eeq{  \end{equation} }
\def\bes{\begin{equation*}}
\def\ees{\end{equation*}}
\def\ben{\begin{enumerate} }
\def\een{\end{enumerate} }
\def\benum{\begin{enumerate} }
\def\eenum{\end{enumerate} }
\def\bmat{\begin{bmatrix}}
\def\emat{\end{bmatrix}}
\def\barr{\begin{array}}
\def\earr{\end{array}}
\def\ss{\le} 
\def\cA{ {\mathcal A} }
\def\cB{ {\mathcal B} }
\def\cE{ {\mathcal E} }
\def\cF{ {\mathscr F} }
\def\cH{ {\mathcal H} }
\def\cI{ {\mathcal I} }
\def\cK{ {\mathcal K} }
\def\cL{ {\mathcal L} }
\def\cM{{\mathcal M}}
\def\cQ{ {\mathcal Q} }
\def\cR{{ \mathcal R }}
\def\cS{{\mathcal S} }
\def\cZ{ {\mathcal Z} }
\def\Mnns{\CC^{n\times n}}
\def\Mpps{\CC^{p\times p}}
\def\BL{\mathcal B_L}
\def\bBL{\partial \BL}
\def\cB{\mathcal B}
\def\Bll{\cB_{\ell' , \ell} }
\def\Mdd{\CC^{d' \times d} }
\def\Mll{\CC^{\ell' \times \ell} }
\def\ss{\le} 
\def\T{T}
\def\la{\lambda}
\def\T{\ast}
\def\NN{\mathbb N}
\def\nonsingular{nondegenerate }
\def\tg{g}
\newcommand{\CC}{{\mathbb C}}
\newcommand{\RR}{{\mathbb R}}
\newcommand{\DD}{{\mathbb D}}
\numberwithin{equation}{section}
\title[Noncommutative pencil ball maps]
{Analytic mappings between noncommutative
  pencil balls}
\author[Helton]{J. William Helton${}^1$}
\address{J. William Helton, Department of Mathematics\\
  University of California \\
  San Diego}
\email{helton@math.ucsd.edu}
\thanks{${}^1$Research supported by NSF grants
DMS-0700758, DMS-0757212, and the Ford Motor Co.}
\author[Klep]{Igor Klep${}^2$}
\address{Igor Klep, Univerza v Ljubljani, Fakulteta za matematiko in fiziko \\
and
Univerza v Mariboru, Fakulteta za naravoslovje in matematiko 
}
\email{igor.klep@fmf.uni-lj.si}
\thanks{${}^2$Research supported by the Slovenian Research Agency grants 
J1-3608 and P1-0222.}
\author[McCullough]{Scott McCullough${}^3$}
\address{Scott McCullough, Department of Mathematics\\
  University of Florida 
   }
   \email{sam@math.ufl.edu}
\thanks{${}^3$Research supported by the NSF grant DMS-0758306.}
\subjclass[2000]{Primary 47A56, 46L07; Secondary 32H99, 32A99, 46L89}
\date{\today}
\keywords{noncommutative analytic function, complete isometry, ball map,
linear matrix inequality}
\begin{document}

\begin{abstract}
In this paper,
we analyze problems involving matrix variables
for which we use a noncommutative algebra setting.
 To be more specific, we use a class of functions (called NC analytic
 functions)  defined by
power series in noncommuting variables and
evaluate these functions on sets of matrices of all dimensions;
 we call such situations  dimension-free.
These types of functions have recently been used
 in the study of dimension-free linear
system engineering problems \cite{HMPVieee,deOHMPima}.

In the earlier paper \cite{HKMS} we characterized NC analytic  maps that send
dimension-free matrix balls to di\-men\-sion-free matrix
balls  and carry the boundary to the boundary;
such maps we call ``NC ball maps''.
In this paper  we turn to a more general  dimension-free ball
$\cB_L$, called a ``pencil ball",
associated with a homogeneous linear pencil
$$L(x):= A_1 x_1 + \cdots + A_\tg x_\tg, \quad A_j\in\CC^{d'\times d}.$$
For
  $X=\mbox{col}(X_1,\dots,X_{\tg}) \in (\Mnns)^{\tg},$
  define
  $
  L(X):=\sum A_j \otimes X_j
$
and let
$$\cB_L:=\big(\{ X\in (\Mnns)^{\tg}: \|L(X)\| < 1\}\big)_{n\in\NN}.$$
We study the generalization of NC ball maps to
 these pencil balls $\cB_L$, and call them ``pencil ball maps''.
We show that every $\cB_L$ has a minimal dimensional (in a
certain sense) defining pencil $\tilde L$.
Up to normalization, a pencil ball map
is the direct sum of $\tilde L$
with an
NC analytic map of the pencil ball into the ball.
That is,  pencil ball maps are simple, in contrast
to the classical result of D'Angelo \cite[Chapter 5]{dAn}
showing there is a great variety of such analytic maps
from $\CC^g$ to $\CC^m$ when $g \ll m$.
To prove our main theorem,
this paper uses the results
of our previous paper \cite{HKMS} plus entirely different techniques,
namely, those of completely contractive maps.  

What we do here is a small piece of the bigger puzzle
of understanding how Linear Matrix Inequalities (LMIs) behave with respect
to noncommutative change of variables.
\end{abstract}
\maketitle

\section{Introduction}
  Given positive integers $n,d,d^\prime$ and $\tg$,
  let $\Mdd$ denote the $d^\prime \times d$ matrices
  with complex coefficients and $(\Mnns)^{\tg}$
  the set of $\tg$-tuples of $n\times n$ matrices.
  For $A_1,\dots,A_{\tg} \in \Mdd$,  the expression
\beq\label{eq:trulyPencil}
  L(x)=\sum_{j=1}^{\tg} A_j x_j,
\eeq
  is a {\bf homogeneous linear pencil}. (Often the term linear
  pencil refers to an \emph{affine} linear function; i.e.,
  a sum of a constant term plus a homogeneous linear pencil.)
  Given
  $X=\mbox{col}(X_1,\dots,X_{\tg}) \in (\Mnns)^{\tg},$
  define
\beq\label{eq:trulyPencilTensor}
  L(X)=\sum_{j=1}^\tg A_j \otimes X_j.
\eeq

  Let $\BL(n)=\{ X\in (\Mnns)^{\tg}: \|L(X)\| < 1\}$
  and let $\BL$ denote the sequence $(\BL(n))_{n\in\NN}$.  Similarly, let
  $\OS{\ell}{\ell^\prime} = ((\Mnns)^{\ell^\prime\times \ell})_{n\in\NN}$.
  The main result of this  paper describes analytic
  mappings $f$ from the {\bf pencil ball} $\BL$ to $\OS{\ell}{\ell^\prime}$
  that  preserve the boundary
  in the sense described
  at the end of Section \ref{sec:formal} below.

  In the remainder of this introduction,
  we give the definitions and background
  necessary for a precise statement of the result,
  and provide  a guide
  to the body of the paper.

\subsection{Formal Power Series}
 \label{sec:formal}
  Let $x=(x_1,\dots,x_{\tg})$  be a $\tg$-tuple
  of noncommuting indeterminates and let
  $\langle x\rangle$ denote the set of all {\bf words} in $x$. This includes
  the empty word denoted by $1$.
  The length of a word $w\in\langle x\rangle$ will be denoted $|w|$.
  For an abelian group $R$ we use $R\langle x\rangle$ to denote the abelian
  group of all (finite)
  sums of {\bf monomials} (these are
  elements of the form $r w$ for $r\in R$ and $w\in \langle x\rangle$).

  Given positive integers $\ell,\ell^\prime$, a {\bf formal
  power series} $f$ in $x$ with
  $\Mll$ coefficients is an expression
  of the form
 \begin{equation}
  \label{eq:def-f}
   f = \sum_{m=0}^\infty \sum_{\substack{\; w\in\langle x\rangle\\ |w|=m}} f_w w= \sum_{m=0}^\infty f^{(m)},
 \end{equation}
  where $f_w \in \Mll$ and $f^{(m)}\in \Mll\langle x\rangle$ is the {\bf homogeneous component} of degree $m$ of $f$, that is, the sum of all monomials in $f$ of degree $m$.
  For $X\in (\Mnns)^\tg$, $X=\mbox{col}(X_1,\dots,X_\tg)$
  and a word
 \bes
  w=x_{j_1}x_{j_2}\cdots x_{j_m}\in\langle x\rangle,
 \ees
  let
 \bes
   w(X) = X_{j_1}X_{j_2} \cdots X_{j_m}\in\Mnns.
 \ees
  Define
 \bes
    f(X)=\sum_{m=0}^\infty \sum_{\substack{\; w\in\langle x\rangle\\ |w|=m}}  f_w \otimes w(X),
 \ees
  provided the series converges (summed in the indicated order).
  The function $f$ is {\bf analytic} \index{analytic}
on $\BL$
  if for each  $n$ and $X\in \BL(n)$ the series $f(X)$
  converges.
  Thus, in this case,  the formal power series $f$
  determines a mapping from $\BL(n)$ to
  $(\Mnns)^{\ell^\prime \times \ell}$ for each $n$
  which is expressed by writing $f:\BL \to \OS{\ell}{\ell^\prime}$.

  The analytic function $f:\BL \to \OS{\ell}{\ell^\prime}$
   is {\bf contraction-valued} \index{contractive}\index{contraction-valued}
   if    $\|f(X)\|\le 1$
   for each $X\in \BL$;  i.e., if the
   values of $f$ are contractions.
   Let $\bBL(n)$ denote the set of all
   $X\in (\Mnns)^\tg$ with $\|L(X)\|=1$.
   If $f:\BL \to \OS{\ell}{\ell^\prime}$ is contraction-valued
   and $X\in\bBL(n)$,
   then, by Fatou's Theorem,  the analytic function
   $f_X:\mathbb D\to (\Mnns)^{\ell^\prime \times \ell}$
   defined by $f_X(z)=f(zX)$ has boundary
   values almost everywhere; i.e.,
   $f(\exp(it)X)$ is defined for almost
   every $t$.
   (We use $\mathbb D$ to denote the unit disc
   $\{z\in\CC : |z|<1\}$.)
   The contraction-valued function
   $f$ is a {\bf pencil ball map}
   \index{binding} \index{ball map}\index{pencil ball map} if
   $\|f((\exp(it)X)\|=1$  a.e.~for every $X\in \bBL$.
   Here the boundary $\bBL$ of the pencil ball $\cB_L$ is
   the sequence $(\bBL(n))_{n\in\NN}$.

\subsection{The Main Result}
 \label{sec:main}
  The homogeneous linear pencil
  $L$ is {\bf nondegenerate}, if
  it is one-one in the sense that 
\beq\label{eq:weLoveTheReferee}
\forall X\in(\Mnns)^{g}: \; \big( L(X)=0 \; \Rightarrow\;
X=0\big)
\eeq
for all $n\in\NN$.

\begin{lemma}\label{lem:nonsingular}
  For a homogeneous linear pencil $L(x)=\sum_{j=1}^{g} A_j x_j$ the following
 are equivalent:
\ben[\rm (i)]
 \item $L$ is nondegenerate;
 \item $L(X)=0$ implies $X=0$ for all $X\in\CC^{g}$, i.e., condition
{\rm\eqref{eq:weLoveTheReferee}} holds for $n=1$;
 \item the set $\{A_j : j=1,\ldots,g\}$ is linearly independent.
\een
\end{lemma}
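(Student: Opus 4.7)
The plan is to prove the implications (i) $\Rightarrow$ (ii) $\Rightarrow$ (iii) $\Rightarrow$ (i), where the first is trivial by specialization, the second is a direct reformulation of linear (in)dependence, and the third is the only substantive step, requiring one to lift the scalar condition to all matrix sizes using the tensor product structure.

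First, (i) $\Rightarrow$ (ii) is immediate since \eqref{eq:weLoveTheReferee} applied in the case $n=1$ (identifying $1\times 1$ matrices with complex scalars) is exactly (ii). Next, for (ii) $\Leftrightarrow$ (iii): if $\{A_1,\dots,A_g\}$ were linearly dependent, pick scalars $c_1,\dots,c_g\in\CC$, not all zero, with $\sum c_j A_j = 0$; then $X=(c_1,\dots,c_g)\in\CC^g$ is a nonzero scalar tuple with $L(X)=0$, contradicting (ii). Conversely, if $L(X)=0$ for some nonzero $X\in\CC^g$, the entries of $X$ furnish a nontrivial linear relation among the $A_j$, so (iii) fails.

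The main work is (iii) $\Rightarrow$ (i). Suppose $\{A_1,\dots,A_g\}\subset\Mdd$ is linearly independent. On the linear span $V:=\mathrm{span}\{A_1,\dots,A_g\}\subset\Mdd$, pick a dual basis: linear functionals $\varphi_1,\dots,\varphi_g\colon V\to\CC$ with $\varphi_k(A_j)=\delta_{jk}$, and extend each $\varphi_k$ arbitrarily (e.g., by Hahn--Banach, or simply by completing a basis) to a linear functional $\tilde\varphi_k\colon \Mdd\to\CC$. Now fix $n\in\NN$ and suppose $X=\mathrm{col}(X_1,\dots,X_g)\in(\Mnns)^g$ satisfies
\bes
L(X)=\sum_{j=1}^g A_j\otimes X_j = 0 \quad\text{in } \Mdd\otimes\Mnns.
\ees
Applying the linear map $\tilde\varphi_k\otimes\mathrm{id}_{\Mnns}\colon \Mdd\otimes\Mnns\to\Mnns$ to both sides yields
\bes
\sum_{j=1}^g \tilde\varphi_k(A_j)\, X_j \;=\; \sum_{j=1}^g \delta_{jk}\, X_j \;=\; X_k \;=\; 0
\ees
for every $k=1,\dots,g$. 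Hence $X=0$, establishing \eqref{eq:weLoveTheReferee}.

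The only place that demands care is constructing the dual functionals $\tilde\varphi_k$ and verifying that applying $\tilde\varphi_k\otimes\mathrm{id}$ to an elementary tensor $A_j\otimes X_j$ really produces $\tilde\varphi_k(A_j) X_j$; this is standard, but it is the conceptual heart of why linear independence of scalar coefficients forces injectivity at every matrix level $n$. No analytic or matrix-size-dependent machinery is needed: the tensor product decouples the coefficient side from the variable side, so a single scalar dual basis handles all $n$ simultaneously.
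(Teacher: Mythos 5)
Your proof is correct and takes essentially the same route as the paper's: the paper treats (i) $\Rightarrow$ (ii) $\Rightarrow$ (iii) as obvious and proves (iii) $\Rightarrow$ (i) by noting that, after the canonical shuffle, vanishing of $\sum_j A_j\otimes X_j$ forces every entry of every $X_j$ to vanish by linear independence of the $A_j$. Your slicing by $\tilde\varphi_k\otimes\mathrm{id}_{\Mnns}$ with a dual basis is just a cleaner, coordinate-free packaging of that same entrywise extraction.
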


  \begin{proof}
(i) $\Rightarrow$ (ii) and (ii) $\Rightarrow$ (iii) are obvious.
For the remaining implication (iii) $\Rightarrow$ (i),
note that $L(X)$ equals $\sum X_j\otimes A_j$ modulo
the canonical shuffle. If this expression equals $0$, then the linear
dependence of the $A_j$ (applied entrywise) implies $X=0$.
  \end{proof}

\def\tL{\tilde{L}}
\def\tA{\tilde{A}}
   The homogeneous linear pencil
 \bes
   \tL(x) =\sum_1^{\tg} \tA_j x_j,
 \ees
   is {\bf equivalent} to $L$ if
   $\|\tL(X)\|=\|L(X)\|$
   for every $n$ and $X\in (\Mnns)^\tg$, i.e., $\cB_{L}=\cB_{\tL}$.

A $\ell'\times \ell$ \nonsingular homogeneous linear pencil $\tL$ is called {\bf a
minimal dimensional defining pencil for a ball} if
$\cB_{L}=\cB_{\tL}$ for some $d'\times d$ homogeneous linear
\nonsingular pencil $L$ implies $\ell'\ell\leq d'd$.
Equivalently: if $L$ is equivalent to $\tL$, then $d'd\geq \ell'\ell$.
Two equivalent minimal dimensional defining pencils for a ball
are
the same up to normalization, cf.~Corollary \ref{cor:minDim}
and Theorem \ref{thm:bigDeal}.  Thus, while there are potentially many
ways of defining minimal, all will agree
with the current usage.  Indeed, heuristically any condition which
eliminates redundant or simply irrelevant summands from the pencil $L$
should do. 

\begin{lemma}\label{lem:minDim}
Suppose $L$ is a \nonsingular homogeneous linear pencil and
$\tL$ is a minimal dimensional defining pencil for $\cB_L$.
Then there is a homogeneous linear pencil $J$ and unitaries $Q,G$
satisfying
$$L=Q(\tL \oplus J)G^*.$$
\end{lemma}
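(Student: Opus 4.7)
The plan is to translate the hypothesis $\cB_L=\cB_{\tL}$ into a complete isometry between operator spaces and then apply the structure theorem for completely isometric embeddings of a finite-dimensional operator space into a rectangular matrix algebra. First, by Lemma \ref{lem:nonsingular}, nondegeneracy of $L$ and $\tL$ means $\{A_j\}$ and $\{\tA_j\}$ are each linearly independent, so $\Phi(A_j):=\tA_j$ extends to a well-defined linear isomorphism between $V:=\mathrm{span}\{A_j\}\subseteq\CC^{d'\times d}$ and $\tV:=\mathrm{span}\{\tA_j\}\subseteq\CC^{\ell'\times\ell}$. The equality $\|L(X)\|=\|\tL(X)\|$ for every $n$ and every $X\in(\Mnns)^g$ is exactly the statement that $\Phi$ is completely isometric with respect to the natural operator-space structures inherited from the ambient matrix algebras.

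Next, invoke the machinery of the noncommutative Shilov boundary / triple envelope (Hamana) for a finite-dimensional operator space $W$: any completely isometric embedding $W\hookrightarrow\CC^{p\times q}$ decomposes, up to a pair of unitaries acting by $A\mapsto UAV^*$, as the direct sum of the triple envelope of $W$ with a complementary TRO meeting $W$ in zero. Minimality of the dimension $\ell'\ell$ for $\tL$ forces the complementary TRO on the $\tL$-side to be zero; equivalently, $\CC^{\ell'\times\ell}$ realizes the triple envelope of $\tV$. Any other completely isometric embedding of the same operator space, in particular $V\hookrightarrow\CC^{d'\times d}$ via $\Phi^{-1}$, must therefore contain this envelope as a direct summand.

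Extracting that summand produces unitaries $Q\in U(d')$ and $G\in U(d)$ together with matrices $B_j$ for which $Q^*A_jG=\tA_j\oplus B_j$; defining $J(x):=\sum_j B_j x_j$ then yields $L=Q(\tL\oplus J)G^*$, the desired conclusion. The main obstacle is the clean application of the triple envelope / TRO theory: one must verify that the minimality notion in the lemma (minimality of the product $\ell'\ell$ among equivalent pencils) matches the envelope-theoretic notion of minimality, and that the splitting is realized by a single compatible pair $(Q,G)$ simultaneously conjugating every $A_j$, not by separate unitaries for each coefficient. If one prefers to avoid TRO language, the same conclusion can be reached by induction on $d'd-\ell'\ell$, at each stage exploiting the norm equality $\|L(X)\|=\|\tL(X)\|$ to locate and peel off a unitarily invariant orthogonal summand of $L$ that contributes nothing to $\cB_L$.
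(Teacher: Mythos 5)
Your proposal follows essentially the same route as the paper: Sections 2--5 of the paper are precisely a self-contained, concrete redevelopment of the Hamana/Blecher--Hay envelope machinery you invoke (done there via injective envelopes of the off-diagonal operator system rather than TRO language), culminating in Theorem \ref{thm:structureI}, Theorem \ref{thm:bigDeal} and Corollary \ref{cor:minDim}, from which the lemma follows exactly as you outline. The one step you flag but leave unverified --- that minimality of $\ell'\ell$ matches envelope-theoretic minimality --- is the content of Corollary \ref{cor:minDim} and follows from the dimension count $\ell'\ell\le(\sum_j d_j')(\sum_j d_j)$ combined with $\sum_j d_j\le\ell$ and $\sum_j d_j'\le\ell'$; note only that the triple envelope of $\tV$ is then the block-diagonal sub-TRO $\oplus_j\CC^{d_j'\times d_j}$ of $\CC^{\ell'\times\ell}$ (unitarily identified with it), not literally all of $\CC^{\ell'\times\ell}$.
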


The proof of this lemma can be found in Section 5.1; it is an
immediate consequence of Theorem \ref{thm:bigDeal} and Corollary
\ref{cor:minDim}.

 The following theorem is the main result of this paper.

 \begin{theorem}
  \label{thm:main}
     Suppose $L$ is a \nonsingular homogeneous linear pencil and
     $\tL$ is a minimal dimensional defining pencil for $\cB_L$.
If $f:\BL \to \OS{\ell}{\ell^\prime}$ is a pencil ball map
     with $f(0)=0$,
     then    there is a contraction-valued analytic
     $\tilde{f}:\BL \to \OS{m}{m^\prime}$ such that
 \begin{equation*}
    f(x) =U \begin{pmatrix} \tL(x) & 0\\ 0&\tilde{f}(x)  \end{pmatrix} V^*
 \end{equation*}
 for some $m',m\in\NN_0$ and
 unitaries $U\in \CC^{\ell'\times \ell'}$ and $V\in\CC^{\ell\times \ell}$.
 \end{theorem}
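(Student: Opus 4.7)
The plan is to analyze the linear part $f^{(1)}$ using the theory of completely contractive maps on operator spaces, then invoke the minimality of $\tL$ to force $f^{(1)}$ into the form $\tL\oplus 0$ up to unitaries, and finally apply Schwarz-type rigidity to kill higher-order terms in the $\tL$-block and to kill the off-diagonal blocks, leaving only $\tilde f$ in the $(2,2)$ corner.

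By Lemma \ref{lem:minDim} I can write $L=Q(\tL\oplus J)G^*$; since $\cB_L=\cB_{\tL}$, the summand $J$ does not affect the domain, so after absorbing $Q,G$ into the unitaries to be constructed it suffices to treat the case $L=\tL$. Expand $f=\sum_{m\ge 1}f^{(m)}$, writing $f^{(1)}(X)=\sum_j B_j\otimes X_j$ with $B_j\in\Mll$, and set $\cA:=\mathrm{span}\{A_1,\dots,A_\tg\}\subset\Mdd$; by Lemma \ref{lem:nonsingular} the $A_j$ are linearly independent, so $\Phi(A_j):=B_j$ extends to a well-defined linear map $\cA\to\Mll$. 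The Cauchy integral
\[
f^{(1)}(X)=\frac{1}{2\pi}\int_0^{2\pi}e^{-i\theta}f(e^{i\theta}X)\,d\theta,
\]
together with the contraction-valuedness of $f$, yields $\|f^{(1)}(X)\|\le\|L(X)\|$ at every matrix size $n$. Equivalently, $\Phi$ is completely contractive when $\cA$ carries the operator-space structure inherited from $\Mdd$.

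The main step is to upgrade $\Phi$ to a complete isometry and identify its range. For $X\in\bBL(n)$ the slice $f_X(z)=f(zX)$ is a disk-to-ball analytic function with $f_X(0)=0$ and $\|f_X(e^{it})\|=1$ a.e.; these Schur-inner constraints, applied across all boundary directions and all matrix sizes and combined with an Arveson-type boundary-representation argument for $\cA$, should force $\Phi$ to preserve the complete norm on a generating family for the C*-envelope of $\cA$, hence to be completely isometric. The minimality of $\tL$ prevents any further compression of this isometric copy, so a Stinespring-type dilation of the CP extension of $\Phi$ produces unitaries $U_0\in\CC^{\ell'\times\ell'}$ and $V_0\in\CC^{\ell\times\ell}$ together with a splitting $\ell'=\tilde d'+m'$, $\ell=\tilde d+m$ under which
\[
U_0^*f^{(1)}(X)V_0=\begin{pmatrix}\tL(X)&0\\ 0&0\end{pmatrix}.
\]

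With this block structure in hand, write $U_0^*f(x)V_0=\bigl(\begin{smallmatrix}p(x)&q(x)\\ r(x)&s(x)\end{smallmatrix}\bigr)$, so $p^{(1)}=\tL$ and $q^{(1)}=r^{(1)}=s^{(1)}=0$. Picking $X\in\bBL$ for which $\tL(X)$ is isometric (such $X$ exist abundantly at large matrix sizes by standard pencil-ball boundary constructions), the matrix Schwarz lemma applied to $p_X(z):=p(zX)$ forces $p(zX)=z\tL(X)$; noncommutative analyticity then propagates this to $p(x)=\tL(x)$ identically. Contractivity of the full block matrix at such $X$ uses the standard fact that a $2\times 2$ contraction with an isometric corner has zero entries in the row and column through that corner, killing $q$ and $r$ on enough boundary points to force $q\equiv 0\equiv r$ by analytic continuation; setting $\tilde f:=s$ (automatically contraction-valued and analytic) and $U:=U_0$, $V:=V_0$ yields the asserted decomposition. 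The principal obstacle lies in the middle step: passing from complete contractivity of $\Phi$ to a complete isometry and then to the rigid direct-sum form $\tL\oplus 0$. This is precisely where the completely-contractive-maps machinery advertised in the abstract, together with the minimality of $\tL$ and results of \cite{HKMS} applied block-wise, must do the heavy lifting; the subsequent Schwarz-type rigidity steps are comparatively standard once the correct block decomposition is in place.
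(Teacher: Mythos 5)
There are genuine gaps at both of the proposal's load-bearing steps. First, your passage from complete contractivity of $f^{(1)}$ to complete isometry does not work as sketched: the slice $f_X(z)=f(zX)$ being inner with $f_X(0)=0$ does \emph{not} force $\|f^{(1)}(X)\|=\|L(X)\|$, since an inner function can have vanishing derivative at the origin (e.g.\ $z\mapsto z^2$), and the appeal to ``Arveson-type boundary representations'' is not an argument. The device the paper uses (Lemma \ref{lem:isoDerivative}) is to evaluate $f$ at the nilpotent tuple $\bigl(\begin{smallmatrix}0&X\\0&0\end{smallmatrix}\bigr)$, on which every word of length $\ge 2$ vanishes, so that $f$ of that tuple \emph{equals} its linear part and the boundary condition applies to $f^{(1)}$ directly; combined with homogeneity this gives $\|f^{(1)}(X)\|=\|L(X)\|$ exactly. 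Second, your claimed normal form $U_0^*f^{(1)}(X)V_0=\bigl(\begin{smallmatrix}\tL(X)&0\\0&0\end{smallmatrix}\bigr)$ is false. The correct conclusion (Theorem \ref{thm:bigDeal}/Theorem \ref{thm:linear-main}) is $U\bigl(\begin{smallmatrix}\tL(x)&0\\0&\phi(\tL(x))\end{smallmatrix}\bigr)V^*$ with $\phi$ completely contractive but in general nonzero: for $L(x)=\tL(x)=x$ the map $f(X)=X\oplus\tfrac12X$ is a pencil ball map fixing $0$ whose linear part admits no unitary normalization with zero $(2,2)$ block. Since your construction sets $\tilde f:=s$ with $s^{(1)}=0$, your route would ``prove'' a statement contradicted by this example.

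The downstream steps inherit these problems and add new ones. Boundary points $X$ with $\tL(X)$ isometric need not exist (e.g.\ when $d'<d$), the isometric-corner argument kills only the column (not the row) unless the corner is unitary, and the claim that ``noncommutative analyticity propagates'' an identity from evaluations at special tuples to the identity of formal power series is exactly the hard point, not a standard fact. The paper handles the higher homogeneous components quite differently: for each word $w=x_{i_1}\cdots x_{i_{m-1}}x_j$ it builds nilpotent block tuples $T$ (with $\lambda I$ placed on prescribed superdiagonals, or compressed creation operators on Fock space) so that the last column of $f(T)$ consists of $f^{(1)}(X)$ together with $\sum_j f_{vx_j}\otimes\lambda^{m-1}X_j$; the resulting column map $\Delta$ is again a complete isometry on $\OB{L}$, and the rigidity of completely isometric extensions from the injective envelope (Lemma \ref{lem:F-map-auto-identity} combined with Theorems \ref{thm:structureCI} and \ref{thm:bigDeal}) forces the unwanted blocks of each coefficient to vanish. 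You would need to replace your Schwarz/continuation step with a coefficient-isolating construction of this kind, and restate the linear normal form with the $\phi(\tL(x))$ block present.
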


\begin{corollary}\label{cor:mainn}
     Suppose $L$ is a \nonsingular homogeneous linear pencil and
     $\tL$ is a $\ell'\times \ell$
     minimal dimensional defining pencil for $\cB_L$.
If $f:\BL \to \OS{\ell}{\ell^\prime}$ is a pencil ball map
     with $f(0)=0$, then
     \begin{equation*}
    f(x) =U\tL(x)  V^*
 \end{equation*}
 for some
 unitaries $U\in \CC^{\ell'\times \ell'}$ and $V\in\CC^{\ell\times \ell}$.
\end{corollary}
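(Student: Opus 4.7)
The plan is to derive Corollary~\ref{cor:mainn} as an immediate dimension-count consequence of Theorem~\ref{thm:main}. Assuming the theorem, I would apply it verbatim to $f$: since $f\colon\BL\to\OS{\ell}{\ell^\prime}$ is a pencil ball map with $f(0)=0$ and $\tL$ is a minimal dimensional defining pencil for $\cB_L$, there are $m,m'\in\NN_0$, a contraction-valued analytic map $\tilde f\colon\BL\to\OS{m}{m'}$, and unitaries $U\in\CC^{\ell'\times\ell'}$, $V\in\CC^{\ell\times\ell}$ with
\begin{equation*}
  f(x) \;=\; U\begin{pmatrix} \tL(x) & 0\\ 0 & \tilde f(x)\end{pmatrix}V^*.
\end{equation*}

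Next I would read off the block sizes. For the product $U(\,\cdot\,)V^*$ to be defined and to yield an element of $\OS{\ell}{\ell^\prime}$ (i.e.\ an $\ell'\times\ell$ matrix of operators, up to the canonical shuffle), the middle block matrix must have size exactly $\ell'\times\ell$. Writing $\tL$ as a $d'\times d$ pencil and $\tilde f$ as an $m'\times m$ map, this forces $d'+m'=\ell'$ and $d+m=\ell$. The hypothesis of the corollary is precisely that $\tL$ already has size $\ell'\times\ell$, i.e.\ $d'=\ell'$ and $d=\ell$, so $m'=0$ and $m=0$. The $\tilde f$-block is therefore $0\times 0$, hence absent, and the decomposition collapses to $f(x)=U\tL(x)V^*$, which is the desired conclusion.

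There is essentially no obstacle here: all of the substance lives in Theorem~\ref{thm:main}. The only point that warrants a sentence of care is the identification of the $\tL$ in the theorem's conclusion with the $\tL$ of the corollary's hypothesis; this is legitimate because, by Lemma~\ref{lem:minDim} together with the remark in the excerpt that minimal dimensional defining pencils agree up to normalization, the dimensions of any minimal dimensional defining pencil for $\cB_L$ are intrinsic to $\cB_L$, so stipulating these dimensions to be $\ell'\times\ell$ is unambiguous.
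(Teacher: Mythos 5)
Your proposal is correct and matches the paper's intended argument: the corollary is an immediate dimension count from Theorem \ref{thm:main}, and indeed the identical "comparing dimensions we see $m'=m=0$, i.e., there is no $\tilde f$" step appears verbatim in the paper's proof of Corollary \ref{cor:autoPencil}. Your extra remark about the well-definedness of the size $\ell'\times\ell$ is fine but not strictly needed, since Theorem \ref{thm:main} is stated for the same given $\tL$ as in the corollary's hypothesis.
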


\subsection{Related work}
 An elegant theory of noncommutative analytic functions is
   developed in the articles \cite{KVV} and \cite{Voi1,Voi2};
   see also \cite{Pop1, Pop6, Pop3, Pop2, Pop4, Pop5}.
   What we use in this article are specializations of definitions
     of these papers.
Also there are results on various classes of noncommutative
functions on several NC domains as we now describe.

\ben[\rm (a)]
\item
For the special case of the NC ball of row
contractions, Corollary \ref{cor:mainn} is the same as Corollary 1.3
in \cite{HKMS} and appears in a  weaker form
in Popescu's paper \cite{Pop5}. Namely, the assumption that the map
takes the boundary of the NC ball into the boundary of the NC ball is
replaced in \cite{Pop5} by the stronger assumption that the NC
ball map is biholomorphic.

\item
Also related to the current paper is  \cite{AK}
which studies the special cases of noncommutative polydisks (and
noncommutative poly-halfplanes)
rather than general pencil balls. With this caveat in mind, the
part of \cite{AK} most closely related to the author's paper is the 
 study noncommutative analytic functions
  given by a formal power series with
$\CC^{q \times q}$ coefficients mapping an $N$-tuple
of $n \times n$  unitary matrices to a $qn \times  qn$ unitary matrix
for
each $n\in\NN$.
This class generalizes the notion of inner function
(e.g., for the case $g = d = d' = 1$ and $L(x) = x$
any scalar inner function is in the \cite{AK} class).
Our pencil ball maps have the additional property
that $f(B)$ is an $n \times n$ matrix of
norm $1$ for every $n \times n$ matrix $B$  of norm $1$
and as we shall see this forces $f$ to be only a single Blaschke
factor.
\item
The paper \cite{BGM} obtains realization results for contraction-valued
noncommutative analytic functions defined on pencil balls of a special
form. In the multiplicity-one case, the coefficients of the pencil are all
partial isometries whose set of initial spaces (overlaps of initial spaces
for different coefficients allowed) form a direct sum decomposition
for the whole domain space, and similarly for the final spaces.
\end{enumerate}

\subsection{The Linear Term and Complete Contractivity}
\label{sec:linterm}
  The first step in the proof of Theorem \ref{thm:main}
  is an analysis of the linear term of $f$ which
  is $f^{(1)}=\sum_{j=1}^\tg  f_{x_j}x_j$ with $f$ expressed
  as in equation \eqref{eq:def-f}.
  It involves heavily the theory of completely contractive
  and completely positive linear maps, the nature
  of which   we illustrate later in this introduction by discussing
  Theorem \ref{thm:main} in the case of linear maps.
 The books \cite{Pau,Pi,BlM}
 provide comprehensive introductions
  to the theory of operator systems, spaces, and algebras,
  and completely contractive and completely isometric
  mappings. The
  papers \cite{BH1} and \cite{BH2} treat very generally
  complete isometries into a $C^*$-algebra.
  Indeed, the results here in the linear case are
  very similar to those in \cite{BL,BH1,BH2} when
  specialized to our setting.    Namely,   the injective envelope of a subspace of
  $m\times n$ matrices is, up to equivalence, a direct sum of full
  matrix spaces.  Rather than only quoting existing results from the
  literature, we have chosen to make the presentation self contained
  with an exposition of the operator space background at the
  level of generality needed for the present purposes.  We believe
  this makes the article more accessible and may expose a wider
  audience to the utility of the extensive literature in operator
  spaces and systems.

  Let  $F$ be a subspace of $\Mdd$.
  For each $n$, there is the subspace
  $F_n = F\otimes \Mnns$ of $\Mdd \otimes \Mnns$.
  A linear mapping $\varphi:F\to \Mll$
  induces linear mappings
  $\varphi_n:F\otimes \Mnns\to \Mll\otimes \Mnns$
  by $\varphi_n(e\otimes Y)=\varphi(e)\otimes Y.$
  Write $F\ss \OS{d}{d^\prime}$ to indicate
  that we  are identifying $F$ with the sequence $(F_n)_{n\in\NN}$.
  A {\bf completely contractive} mapping $\varphi:F\to \OS{\ell}{\ell^\prime}$
  is a linear mapping $\varphi:F\to \Mll$
  such that  $\|\varphi_n(Z)\|\le \|Z\|$ for every
  $n$ and $Z\in F\otimes \Mnns$.
  \index{completely contractive}
  If instead, $\|\varphi_n(Z)\|= \|Z\|$ for every
    $n$ and $Z\in F\otimes \Mnns$, then $\varphi$ is
{\bf completely isometric}.
We shall be interested in completely isometric maps
 acting on the {\bf range} $\OB{L}$ of a homogeneous linear pencil $L$.
 This range for a $d' \times d$ pencil in $\tg$ variables
 is defined as follows.
  For each $n$, let $\OB{L}(n)$ denote the
range of $L$ applied to $X$ in $(\Mnns)^\tg$:
$$\OB{L}(n)
= \{ L(X): \ X \in (\Mnns)^\tg \} .$$
  Let $\OB{L}=(\OB{L}(n))_{n\in\NN}.$
In particular, 
we have $\OB{L}=F \ss \OS{d}{d^\prime}$.

\begin{theorem}
 \label{thm:linear-main}
 Let $L$ be a \nonsingular homogeneous linear pencil and
     $\tL$ a minimal dimensional defining pencil for $\cB_L$.
  If $\psi:\OB{L}\to \OS{\ell}{\ell^\prime}$
  is completely isometric, then there exist unitaries $U\in\CC^{\ell'\times \ell'}$
  and $V\in\CC^{\ell\times \ell}$, positive integers $m,m^\prime$,
  and a completely contractive
  mapping $\phi:\OB{\tL}\to \OS{m}{m^\prime}$
  such that
 \begin{equation*}
   \psi(L(x))= U \begin{pmatrix} \tL(x) & 0 \\ 0 & \phi(\tL(x))\end{pmatrix}V^*.
 \end{equation*}
\end{theorem}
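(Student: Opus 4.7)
The plan is to translate the complete isometry hypothesis into an equivalence of pencils, then invoke Lemma~\ref{lem:minDim}. First, note that $\OB{L}$ at size one is just $\mathrm{span}\{A_1,\dots,A_\tg\}$, with the $A_j$ linearly independent by Lemma~\ref{lem:nonsingular}. A linear map $\psi$ on $\OB{L}$ is therefore determined by the matrices $B_j:=\psi(A_j)\in\CC^{\ell'\times\ell}$, and its induced map at size $n$ satisfies $\psi_n(L(X))=M(X)$, where $M(x):=\sum_{j} B_j x_j$ is a new homogeneous linear pencil. The complete isometry hypothesis says exactly that $\|L(X)\|=\|M(X)\|$ for every $n$ and every $X\in(\Mnns)^\tg$; in the terminology of Section~\ref{sec:main} this means $M$ is equivalent to $L$, and hence $\cB_M=\cB_L$. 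Isometry at $n=1$ also forces the $B_j$ to be linearly independent, so Lemma~\ref{lem:nonsingular} gives that $M$ is nondegenerate.

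Next, apply Lemma~\ref{lem:minDim} to the nondegenerate pencil $M$, using $\tL$ as the minimal dimensional defining pencil for $\cB_M=\cB_L$. This produces unitaries $U\in\CC^{\ell'\times\ell'}$, $V\in\CC^{\ell\times\ell}$ and a homogeneous linear pencil $J$ of some size $m'\times m$ with
\[
M(x) = U\bigl(\tL(x)\oplus J(x)\bigr)V^*.
\]
Define $\phi:\OB{\tL}\to\OS{m}{m'}$ by $\phi(\tL(X))=J(X)$. Well-definedness is precisely where the nondegeneracy of $\tL$ enters: if $\tL(X)=\tL(X')$ then $\tL(X-X')=0$, forcing $X=X'$ and hence $J(X)=J(X')$.

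Complete contractivity of $\phi$ is then immediate from the block-diagonal norm identity, valid at each $n$ and each $X$:
\[
\|J(X)\|\le\|\tL(X)\oplus J(X)\|=\|M(X)\|=\|L(X)\|=\|\tL(X)\|.
\]
Substituting $\phi(\tL(x))$ for $J(x)$ in the factorization of $M$ then yields the desired formula, since $\psi(L(X))=M(X)$. All the substantive work is concentrated in Lemma~\ref{lem:minDim}; the main obstacle, invisible at this level, is the pencil decomposition behind that lemma, which the paper handles via Theorem~\ref{thm:bigDeal} and Corollary~\ref{cor:minDim} in Section~5.
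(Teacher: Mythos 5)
Your reduction is correct, but it takes a genuinely different route from the paper, and it is worth being precise about how much it actually establishes. The paper's own argument for Theorem~\ref{thm:linear-main} (in the guise of Theorem~\ref{thm:bigDeal}) first computes a concrete injective envelope of $\OB{L}$ (Theorem~\ref{thm:structureI}), extends $\psi$ completely isometrically to that envelope $\oplus\OS{d_j}{d_j^\prime}$ via Corollary~\ref{cor:essential}, and then applies the structure theorem for complete isometries out of direct sums of full matrix spaces (Theorem~\ref{thm:structureCI}); Corollary~\ref{cor:minDim} is then needed to pass from the specific minimal $\tL$ produced by the envelope to an arbitrary minimal dimensional defining pencil. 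You instead observe that $M:=\psi\circ L$ is itself a nondegenerate pencil equivalent to $L$ and apply Lemma~\ref{lem:minDim} directly to $M$; your verifications (that $\psi_n(L(X))=M(X)$, that $M$ is nondegenerate, that $\phi$ is well defined by nondegeneracy of $\tL$, and that complete contractivity follows from the block-diagonal norm identity) are all sound, modulo the cosmetic point that $m,m^\prime$ may be $0$. What this buys is a clean demonstration that Theorem~\ref{thm:linear-main} is formally equivalent to Lemma~\ref{lem:minDim} (the converse implication is just the case $\psi=\mathrm{id}$). What it does not buy is any independent progress: the paper proves Lemma~\ref{lem:minDim} only as a consequence of Theorem~\ref{thm:bigDeal} and Corollary~\ref{cor:minDim}, i.e., of exactly the injective-envelope and complete-isometry machinery that the paper's proof of Theorem~\ref{thm:linear-main} invokes directly. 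So your argument is a valid and rather elegant reorganization of the deduction --- not a circular one, since Lemma~\ref{lem:minDim} is proved without reference to Theorem~\ref{thm:linear-main} --- but the entire analytic content remains hidden inside the lemma you quote, as you acknowledge in your closing sentence.
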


 If $f:\BL \to \OS{\ell}{\ell^\prime}$ is a pencil ball map with
  $f(0)=0$ and if $L$ is a \nonsingular homogeneous linear pencil,
  then the linear part $f^{(1)}$ of $f$
  induces  a completely isometric mapping
  $$\psi:\OB{L}\to\OS{\ell}{\ell^\prime},\quad L(x)\mapsto f^{(1)}(x)$$
  (see Lemma \ref{lem:isoDerivative})
  to which Theorem \ref{thm:linear-main} applies.

\subsection{Readers Guide}
  The remainder of the paper is organized as follows.
  Section \ref{sec:prelims} provides background
  on completely contractive maps, operator spaces,
  and injective envelopes needed for the remainder
  of the paper. The point of departure is
  Arveson's extension theorem.
   In Section \ref{sec:ci-corners} a result
   from our previous paper \cite{HKMS} is used to characterize
   completely isometric mappings from
   $\oplus \OS{d_j}{d_j^\prime}$ to $\OS{n}{n^\prime}$
   and in Section \ref{sec:envelope-RL} the
   injective envelope of  $\OB{L}$ is determined.
   The main results, Theorems \ref{thm:linear-main}
   and \ref{thm:main} are proved in Section \ref{sec:cimaps}.

   The main results naturally generalize to
   mappings $f:\mathcal B_{L}\to \cB_{L'}$;
   i.e., analytic mappings between pencil balls
   determined by homogeneous linear (nondegenerate)
   pencils. The details are in Section \ref{sec:more}.

\section{The Complete Preliminaries}
 \label{sec:prelims}

  For the reader's convenience, in this section we
  have gathered the background material on
  completely positive, contractive, and isometric mappings
  and injective envelopes needed in the sequel.
  Since our attention is restricted to the case of
  finite dimensional subspaces of matrix algebras,
  the exposition here is considerably more concrete than that found in
  the literature, where the canonical level of generality
  involves arbitrary subspaces of  $C^*$-algebras. 
 We have followed the general outline, based upon
  the off-diagonal techniques of Paulsen, found
  in his excellent book,  cf.~\cite[p.~98]{Pau}.
  The reader with
  expertise in operator spaces and systems
  could likely skip or skim this section and proceed to Section
  \ref{sec:ci-corners}.  The reader  familiar with the
  work of Blecher and Hay  in \cite{BH1,BH2} or of Blecher and
  Labuschagne in \cite{BL} might safely skip to Section
  \ref{sec:cimaps}.    Indeed, this section and the next two
  establish the fact that the injective envelope of a subspace of
  $m\times n$ matrices is, up to equivalence, a direct sum of full
  matrix spaces - see Theorem \ref{thm:structureCI}.

\subsection{Completely Contractive and Completely Positive Maps}
 \label{sec:prelimsO}
Recall the notion of completely contractive defined in
Section \ref{sec:linterm}. We shall also need a  related notion
of positivity.
  A subspace  $S\ss \OS{m}{m}$
  is an {\bf operator system} if it is self-adjoint (that is,
  closed under $X\mapsto X^*$)
  and contains the identity.
  A {\bf completely positive}
  mapping $\psi:S\to \OS{p}{p}$
  is a linear map $\psi:S\to \Mpps$ such that
  $\psi_n(Z)$ is a positive semi-definite
  for every $n$ and every positive semi-definite
  matrix $Z\in S\otimes \Mnns.$ \index{completely positive}
  \index{operator system}

  The significance of completely positive maps is that, while
  positive maps $S\to \Mpps$ do not necessarily extend
  to positive maps on all of  $\CC^{m\times m}$, \emph{completely} positive
  maps on $S$ \emph{do} extend to completely positive maps on all of $\OS{m}{m}$.

\begin{theorem}[Arveson's extension theorem, cf.~{\protect\cite[Theorem 7.5]{Pau}}]
 \label{thm:arveson}
  If $S\ss \OS{m}{m}$ is an operator system and
  $\psi:S\to \OS{p}{p}$ is completely positive, then
  there is a completely positive mapping
  $\Psi:\OS{m}{m} \to \OS{p}{p}$ such that
  $\psi=\Psi|_S$.
\end{theorem}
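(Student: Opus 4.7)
The plan is to reduce the statement to a Hahn--Banach/Krein-type extension of positive linear functionals on operator systems, following the scheme in \cite[Ch.~7]{Pau}. The reduction runs through a duality between completely positive maps $\psi:S\to\Mpps$ and positive linear functionals on the amplification $\Mpps(S):=\Mpps\otimes S\subseteq\CC^{pm\times pm}$.

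For the duality, to each linear $\psi:S\to\Mpps$ I associate the functional
$$\hat\psi\big([s_{ij}]_{i,j=1}^p\big):=\sum_{i,j=1}^p[\psi(s_{ij})]_{ij}$$
on $\Mpps(S)$. Testing positivity on rank-one positive elements of the form $\alpha^*\alpha$, with row vectors $\alpha=[s_1\;\cdots\;s_p]\in S^{1\times p}$, one verifies that $\psi$ is completely positive if and only if $\hat\psi$ is a positive linear functional, and that the correspondence $\psi\leftrightarrow\hat\psi$ is linear, bijective, and respects restrictions. This reduces the task to extending the positive functional $\hat\psi$ on $\Mpps(S)$ to a positive linear functional $\hat\Psi$ on $\CC^{pm\times pm}$.

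Extending a positive functional $\omega$ from an identity-containing operator system $T\subseteq\CC^{N\times N}$ to all of $\CC^{N\times N}$ is classical order-theoretic Hahn--Banach. Since $T$ contains the identity, $\omega$ is automatically bounded with $\|\omega\|=\omega(I)$; for any self-adjoint $y\in\CC^{N\times N}$ the inequality $-\|y\|I\le y\le\|y\|I$ ensures that the sets $\{t=t^*\in T:t\le y\}$ and $\{t=t^*\in T:t\ge y\}$ are both nonempty, and positivity of $\omega$ forces
$$\sup\{\omega(t):t=t^*\in T,\,t\le y\}\;\le\;\inf\{\omega(t):t=t^*\in T,\,t\ge y\}.$$
Choosing any value in this nonempty closed interval for $\omega(y)$ and extending linearly preserves positivity, and induction on real codimension (or Zorn's lemma) delivers the positive extension $\hat\Psi$. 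Translating $\hat\Psi$ back through the duality by $[\Psi(x)]_{ij}:=\hat\Psi(E_{ji}\otimes x)$, where $E_{ji}\in\Mpps$ are the standard matrix units, produces the desired completely positive $\Psi:\Mmms\to\Mpps$ with $\Psi|_S=\psi$.

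The main obstacle is the duality in the first step: one must verify that complete positivity of $\psi$ is equivalent to positivity of the \emph{single} scalar functional $\hat\psi$, not merely some weaker form of positivity. This ultimately rests on the fact that, for maps into $\Mpps$, $p$-positivity already forces complete positivity, so detecting positivity of $\hat\psi$ on all of $\Mpps(S)$ captures every level of amplification at once. Getting the signs, indices, and the precise relationship between positivity of $\hat\psi$ and $p$-positivity of $\psi$ right is where care is needed; the subsequent Hahn--Banach extension and the translation back are routine.
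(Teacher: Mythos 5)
The paper offers no proof of this statement: it is quoted as a known result with a citation to Paulsen's book, and your argument is exactly the standard proof found there (the duality $\psi\leftrightarrow\hat\psi$ between completely positive maps into $\Mpps$ and positive functionals on $\Mpps\otimes S$, the order-theoretic Hahn--Banach extension of positive functionals off an operator system containing $I$, and the translation back via matrix units), which is correct in this finite-dimensional setting. The only quibbles are cosmetic: the reverse implication ``$\hat\psi$ positive $\Rightarrow$ $\psi$ completely positive'' is checked by applying $\hat\psi$ to elements $A^*PA$ with $P\in\Mpps\otimes S$ positive and $A$ scalar, not only to rank-one elements $\alpha^*\alpha$ over $S$, and whether the recovery formula uses $E_{ij}$ or $E_{ji}$ is a matter of the chosen identification of $\Mpps\otimes S$ with $p\times p$ matrices over $S$.
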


  There are numerous connections between completely
  contractive and completely positive maps of which the off-diagonal
  construction will be used repeatedly.
  Given $F\ss \OS{d}{d'}$, let
$$
  \cS_F = 
  \begin{pmatrix} \CC I_d & F^* \\ F & \CC I_{d'} \end{pmatrix}
 = \{ \begin{pmatrix} \lambda I_d & b^* \\ a & \eta I_{d'} \end{pmatrix} : \lambda, \eta \in \mathbb C,
 \ \ a,b\in F \} \ss \OS{d+d^\prime}{d+d^\prime}.
$$
 Evidently $\cS_F$ is an operator system.

  Given a completely contractive $\varphi:F\to\OS{\ell}{\ell^\prime}$, the
  mapping $\psi:\cS_F\to \OS{\ell+\ell^\prime}{\ell+\ell^\prime}$ defined by
\beq
\label{eq:corner}
  \psi(\begin{pmatrix} \lambda I_d & b^* \\ a & \eta I_{d'} \end{pmatrix})
    = \begin{pmatrix} \lambda I_\ell & \varphi(b)^* \\ \varphi(a)& \eta I_{\ell'} \end{pmatrix}
\eeq
 is completely positive.  It is also {\bf unital} in that  $\psi(I)=I$.
 \index{unital}
 The converse is true, too.
 The compression of a completely positive unital 
  $\psi:\cS_F\to \OS{\ell+\ell^\prime}{\ell+\ell^\prime}$ to
 its lower left-hand corner is completely contractive.

\subsection{The injective envelope}
   Determining the structure of completely isometric mappings
   is intimately intertwined with the notion of the injective
   envelope.

   A subspace  $E\ss \OS{d}{d^\prime}$
   is {\bf injective} \index{injective} if whenever $F\ss \OS{\ell}{\ell^\prime}$
   and $\psi:F\to E$ is completely contractive, then there
   exists a completely contractive $\tilde\psi:\OS{\ell}{\ell^\prime} \to E$ such that
    $\tilde\psi|_F=\psi$:
\[ \xymatrix{
 F
\ar@{^{(}-{>}}[rr]
\ar[d]^{\psi}_{\text{c.contr.}} && \OS{\ell}{\ell^\prime}
\ar@{{}{--}>}[dll]_{\exists \tilde\psi}^{\text{c.contr.}} \\
E
}
\]

  The notion of injective envelope is categorical.

 \begin{lemma}
  \label{lem:nonsense}
   If $E\ss \OS{m}{m^\prime}$ is injective
   and $\tau:E\to\OS{\ell}{\ell^\prime}$ is
   completely isometric, then $\tau(E)$ is injective.
 \end{lemma}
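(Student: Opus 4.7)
The plan is to transfer injectivity across the completely isometric embedding $\tau$ by conjugation. The starting observation is that since $\tau$ is completely isometric, it is in particular injective as a linear map, so $\tau^{-1}\colon \tau(E)\to E$ is well-defined on the image. Moreover $\tau^{-1}$ is itself completely isometric (hence completely contractive), because for $w=\tau_n(z)\in \tau(E)\otimes\Mnns$ the identity $\|(\tau^{-1})_n(w)\|=\|z\|=\|\tau_n(z)\|=\|w\|$ follows directly from the complete isometry of $\tau$.

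Given any test data for injectivity of $\tau(E)$---a subspace $F\subseteq \OS{n}{n^\prime}$ together with a completely contractive map $\psi\colon F\to \tau(E)$---I would form the composition $\tau^{-1}\circ \psi\colon F\to E$. Since a composition of completely contractive maps is completely contractive, this is completely contractive with codomain $E$. Invoking injectivity of $E$ applied to $\tau^{-1}\circ\psi$, I obtain a completely contractive extension $\rho\colon \OS{n}{n^\prime}\to E$ with $\rho|_F = \tau^{-1}\circ\psi$.

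The desired extension is then $\tilde\psi := \tau\circ\rho\colon \OS{n}{n^\prime}\to \tau(E)$. It is completely contractive as a composition of completely contractive maps, and
\[
  \tilde\psi|_F \;=\; \tau\circ\rho|_F \;=\; \tau\circ\tau^{-1}\circ\psi \;=\; \psi,
\]
which verifies the defining extension property of injectivity for $\tau(E)$.

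The argument is essentially categorical, so I do not expect any substantive obstacle; the only subtlety worth flagging is that the definition of injectivity requires the extension to land in $E$ (respectively $\tau(E)$) rather than merely in the ambient matrix space $\OS{m}{m^\prime}$ (respectively $\OS{\ell}{\ell^\prime}$), which is exactly why one must invert $\tau$ on its image and then push back by $\tau$, rather than trying to extend within the ambient spaces directly.
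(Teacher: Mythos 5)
Your argument is correct and is essentially identical to the paper's proof: both observe that $\tau^{-1}$ is completely isometric (hence completely contractive) on $\tau(E)$, pull a completely contractive test map back to $E$, extend there by injectivity of $E$, and push the extension forward by $\tau$. No gaps.
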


 \begin{proof}
   It is enough to observe that the mapping $\tau^{-1}:\tau(E)\to E$
   is completely isometric. For the details,
   suppose $F\ss \OS{d}{d^\prime}$ and $\varphi:F\to \tau(E)$
   is completely contractive. Then $\tau^{-1}\circ \varphi:F\to E$
   is completely contractive and hence extends to
   a completely contractive $\tilde{\varphi}:\OS{d}{d^\prime}\to E$.
   The mapping $\tau \circ \tilde{\varphi}:\OS{d}{d^\prime}\to\tau(E)$ is then
   a completely contractive extension of $\varphi$.
 \end{proof}

  \begin{lemma}
    The spaces $\OS{\ell}{\ell^\prime}$ are injective.
  \end{lemma}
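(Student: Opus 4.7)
The plan is to combine Arveson's extension theorem (Theorem~\ref{thm:arveson}) with the Paulsen off-diagonal correspondence between completely contractive and completely positive maps developed in Section~\ref{sec:prelimsO}. Suppose $F \subseteq \OS{d}{d'}$ and $\psi : F \to \OS{\ell}{\ell'}$ is completely contractive; the goal is to produce a completely contractive $\tilde\psi : \OS{d}{d'} \to \OS{\ell}{\ell'}$ extending $\psi$.

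First, form the operator system $\cS_F \subseteq \OS{d+d'}{d+d'}$ and use the off-diagonal construction~\eqref{eq:corner} to associate to $\psi$ a unital completely positive map $\widehat\psi : \cS_F \to \OS{\ell+\ell'}{\ell+\ell'}$. Next, apply Arveson's extension theorem to $\widehat\psi$ to obtain a completely positive extension $\widehat\Psi : \OS{d+d'}{d+d'} \to \OS{\ell+\ell'}{\ell+\ell'}$. Finally, for $X \in \OS{d}{d'}$, define $\tilde\psi(X)$ to be the $\ell' \times \ell$ lower-left corner of
\[ \widehat\Psi\begin{pmatrix} 0 & 0 \\ X & 0 \end{pmatrix}. \]
By the converse half of the off-diagonal correspondence noted at the end of Section~\ref{sec:prelimsO}, the compression of a unital completely positive map to its lower-left corner is completely contractive, so $\tilde\psi$ is completely contractive; and by the explicit form of $\widehat\psi$ in~\eqref{eq:corner}, $\tilde\psi$ agrees with $\psi$ on $F$.

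There is no real obstacle here: the entire argument is an immediate assembly of tools already laid out in Section~\ref{sec:prelimsO}. The only bookkeeping is tracking the block sizes ($d \times d$ top-left, $d' \times d'$ bottom-right in $\cS_F$, and analogously for the target) and verifying that the $\ell' \times \ell$ corner of $\widehat\Psi$ applied to the lower-left embedding of $\OS{d}{d'}$ in $\OS{d+d'}{d+d'}$ does indeed extend $\psi$; this is routine.
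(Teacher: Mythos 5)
Your argument is correct and is essentially identical to the paper's proof: both embed $\OS{d}{d'}$ and $\OS{\ell}{\ell'}$ as lower-left corners via the off-diagonal construction, apply Arveson's extension theorem to the induced completely positive map on $\cS_F$, and then compress back to the corner. The only point worth noting (implicit in both arguments) is that the Arveson extension $\widehat\Psi$ is still unital since it agrees with $\widehat\psi$ on $I\in\cS_F$, which is what guarantees the corner compression is completely contractive.
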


 \begin{proof}
   View $\OS{\ell}{\ell^\prime}$ as the lower left-hand
   corner of $\OS{\ell+\ell^\prime}{\ell+\ell^\prime}$ as
   in equation \eqref{eq:corner}.
   Note that the mapping
   $\Psi:\OS{\ell+\ell^\prime}{\ell+\ell^\prime} \to \OS{\ell}{\ell^\prime}$
  given by
\beq\label{eq:cornerProj}
  \Gamma (\begin{pmatrix} x_{11} & x_{12} \\ a & x_{22} \end{pmatrix})
    =a
\eeq
  is completely contractive.
  Likewise, the inclusion $\iota$ of $\OS{d}{d^\prime}$
  into $\OS{d+d^\prime}{d+d^\prime}$ as the lower
  left-hand corner is completely contractive.

   Suppose $F\ss \OS{d}{d^\prime}$ and  $\varphi:F\to \OS{\ell}{\ell^\prime}$
   is completely contractive.  Then $\psi:F\to \OS{\ell+\ell^\prime }{\ell+\ell^\prime}$,
   as in equation \eqref{eq:corner}, is completely positive.
   By Theorem \ref{thm:arveson}, $\psi$ extends
   to a completely positive $\Psi:\OS{d+d^\prime}{d+d^\prime}\to\OS{\ell+\ell^\prime}{\ell+\ell^\prime}$.
      The mapping $\Gamma \circ \Psi \circ \iota$ is a
   completely contractive extension of $\varphi$.
 \end{proof}

   A mapping $\Phi:\OS{d}{d'} \to J\ss \OS{d}{d'}$ is
   a {\bf projection} provided $\Phi$ is onto and $\Phi\circ\Phi=\Phi$.

  \begin{proposition}\label{prop:injProj}
    A subspace $E\ss \OS{d}{d^\prime}$ is injective if and only
    if there is a completely contractive projection $\Phi:\OS{d}{d^\prime} \to E$.
  \end{proposition}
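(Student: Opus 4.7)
The plan is to prove both directions by direct constructions, leaning on the previous lemma that each $\OS{d}{d^\prime}$ is injective.

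For the forward direction, suppose $E \ss \OS{d}{d^\prime}$ is injective. I would apply the defining extension property of injectivity to the identity map $\mathrm{id}_E : E \to E$, viewed as a completely contractive (indeed, completely isometric) map, with respect to the inclusion $E \hookrightarrow \OS{d}{d^\prime}$. This yields a completely contractive $\Phi : \OS{d}{d^\prime} \to E$ with $\Phi|_E = \mathrm{id}_E$. Surjectivity is then immediate, and $\Phi \circ \Phi = \Phi$ follows because $\Phi$ takes values in $E$ where $\Phi$ acts as the identity. This direction is essentially formal once injectivity is applied in the right way.

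For the reverse direction, suppose $\Phi : \OS{d}{d^\prime} \to E$ is a completely contractive projection. Given any $F \ss \OS{\ell}{\ell^\prime}$ and a completely contractive $\varphi : F \to E$, I would first regard $\varphi$ as a completely contractive map into $\OS{d}{d^\prime}$. Since $\OS{d}{d^\prime}$ is injective (by the preceding lemma), there is a completely contractive extension $\widehat\varphi : \OS{\ell}{\ell^\prime} \to \OS{d}{d^\prime}$. Then $\tilde\varphi := \Phi \circ \widehat\varphi : \OS{\ell}{\ell^\prime} \to E$ is completely contractive as a composition, and for $x \in F$,
\[
\tilde\varphi(x) = \Phi(\widehat\varphi(x)) = \Phi(\varphi(x)) = \varphi(x),
\]
where the last equality uses that $\varphi(x) \in E$ together with the projection property $\Phi|_E = \mathrm{id}_E$. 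Thus $\tilde\varphi$ extends $\varphi$, verifying injectivity of $E$.

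There is no real obstacle; the argument is a pair of diagram chases. The only subtle point is the observation that a completely contractive projection $\Phi$ automatically satisfies $\Phi|_E = \mathrm{id}_E$, which follows from $\Phi \circ \Phi = \Phi$ together with surjectivity: for $e \in E$ choose $x$ with $\Phi(x) = e$, then $\Phi(e) = \Phi(\Phi(x)) = \Phi(x) = e$. This fact is used silently in both directions.
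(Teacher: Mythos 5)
Your proof is correct and follows essentially the same route as the paper: the forward direction extends $\mathrm{id}_E$ along the inclusion $E\hookrightarrow \OS{d}{d^\prime}$, and the reverse direction extends into the injective space $\OS{d}{d^\prime}$ and composes with $\Phi$. Your explicit note that a surjective idempotent $\Phi$ restricts to the identity on $E$ is a small point the paper leaves implicit, and it is verified correctly.
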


  \begin{proof}
    If $E$ is injective, apply the definition with $F=E\leq
    \OS{d}{d^\prime}$ and $\psi$ the identity map. The conclusion
    is there exists a completely contractive
    $\Phi$ which extends $\psi$ and maps onto (into)
    $E$. It follows that $\Phi$ is a projection (completely contractive).

\[ \xymatrix{
 E
\ar@{^{(}-{>}}[rr]
\ar[d]_{\rm id} && \OS{d}{d'}
\ar@{{}{--}>}[dll]_{\exists \Phi}^{\text{c.contr.}} \\
E
}
\]

    Conversely, suppose the completely contractive $\Phi$ exists and
    let $F,\OS{\ell}{\ell^\prime}$ and $\psi$ be as in the definition
    of an injective space.
    Since $\OS{d}{d^\prime}$ is injective and $E\ss \OS{d}{d^\prime}$,
    there exists a completely contractive extension $\tilde{\psi}$ of
    $\psi$ mapping $\OS{\ell}{\ell^\prime}$ into $\OS{d}{d^\prime}$.  The
    composition $\varphi=\Phi \circ\tilde{\psi}$ is completely contractive
    into $E$ and extends $\psi$:
\[ \xymatrix{
 F
\ar@{^{(}-{>}}[rr]
\ar[d]_{\psi} && \OS{\ell}{\ell'} \ar[d]^{\exists \tilde{\psi}}
\ar@{{}{--}>}[dll]_{\Phi\circ\tilde{\psi}} \\
E && \OS{d}{d'} \ar[ll]^{\Phi}
}
\]
  \end{proof}

    Suppose $F\ss E\ss \OS{d}{d^\prime}$.
    A completely contractive mapping
    $\rho:E\to E$ is an
    {\bf $F$-map} on $E$ \index{F-map}
     provided $\rho(f)=f$ for all $f\in F$.
    For $E$ injective with $F\ss E\ss \OS{d}{d^\prime}$,
define a partial order $\leq_E$ on $F$-maps on $E$ by $\sigma \le_E \rho$
    if $\|\sigma(x)\| \le \|\rho(x)\|$ for each $x\in E$.
    An $F$-map on $E$ which is minimal with respect
   to this ordering is an {\bf $E$-minimal $F$-map}.
   \index{minimal $F$-map}

  \begin{lemma}
     Given $F\ss E\ss \OS{d}{d^\prime}$,
     the collection of $F$-maps on $E$  is compact.
     Further, if $E$ is injective,
     then there is an $E$-minimal $F$-map.
   Indeed, for any $F$-map $\rho$ on $E$ there exists an $E$-minimal
     $F$-map $\sigma$ such that $\sigma \le_E \rho$.
  \end{lemma}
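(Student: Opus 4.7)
The plan is to handle the two assertions separately; the compactness claim supplies the essential tool for the Zorn's lemma argument that establishes existence of $E$-minimal $F$-maps. Notably, injectivity of $E$ plays no role in the argument: the identity $\mathrm{id}_E$ is always an $F$-map, since it is completely isometric and fixes $F$.

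First I would observe that since $E$ is a finite dimensional subspace of $\CC^{d'\times d}$, the vector space of linear maps $E\to E$ is finite dimensional, so by Heine--Borel it suffices to check that the set of $F$-maps is closed and bounded in this space. Boundedness is immediate because even the $n=1$ case of complete contractivity forces $\|\rho\|\le 1$. For closedness, the restriction condition $\rho|_F=\mathrm{id}_F$ is an affine equality, while complete contractivity is the intersection over all $n\in\NN$ and all $Z\in E\otimes \Mnns$ of the closed inequalities $\|\rho_n(Z)\|\le \|Z\|$; both carve out closed subsets, so their intersection is closed and hence compact.

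Next I would handle the minimality assertion by applying Zorn's lemma to
\[ \cC_\rho := \{\sigma : \sigma \text{ is an $F$-map on } E \text{ and } \sigma\le_E\rho\}. \]
This set is nonempty (it contains $\rho$) and closed inside the compact set of $F$-maps, since the defining condition $\|\sigma(x)\|\le \|\rho(x)\|$ is closed in $\sigma$ for each $x\in E$; hence $\cC_\rho$ is itself compact. It remains to show that every chain in $(\cC_\rho,\le_E)$ admits a lower bound inside $\cC_\rho$. Given a $\le_E$-descending chain $(\rho_\alpha)$, compactness supplies a cluster point $\sigma\in\cC_\rho$ reached along some subnet $\rho_{\alpha_\beta}\to \sigma$. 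For each fixed $x\in E$ the scalar net $(\|\rho_\alpha(x)\|)_\alpha$ is monotone decreasing in $\RR$, so it converges to $\inf_\alpha\|\rho_\alpha(x)\|$, and every subnet converges to the same infimum. Therefore
\[ \|\sigma(x)\| \;=\; \lim_\beta\|\rho_{\alpha_\beta}(x)\| \;=\; \inf_\alpha\|\rho_\alpha(x)\| \;\le\; \|\rho_\gamma(x)\| \]
for every $\gamma$ in the chain, which says $\sigma\le_E\rho_\gamma$ for every $\gamma$. Thus $\sigma$ is a lower bound, Zorn's lemma produces a minimal element of $\cC_\rho$, and specializing to $\rho=\mathrm{id}_E$ yields the bare existence statement.

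The step I expect to be the most delicate is precisely this cluster-point-as-lower-bound passage: because $\le_E$ is defined pointwise in $x$ rather than as an ordering on the endomorphism space, one cannot directly appeal to a $\liminf$ of operators or to monotone convergence in a normed sense. The trick is to marry finite dimensionality of the endomorphism space (which gives convergent subnets) with monotonicity of the scalar nets $(\|\rho_\alpha(x)\|)_\alpha$ for every individual $x$ (which forces all subnet limits to coincide with the infimum). Everything else in the argument is routine topology.
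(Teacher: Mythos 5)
Your proposal is correct and follows essentially the same route as the paper: compactness via closed-and-bounded in the finite-dimensional space of linear maps $E\to E$, then Zorn's lemma with a convergent subnet of a decreasing chain serving as the lower bound. Your extra observations -- that the identity map makes injectivity of $E$ irrelevant here, and the explicit argument that the cluster point realizes $\inf_\alpha\|\rho_\alpha(x)\|$ and hence bounds the chain from below -- are correct refinements of details the paper leaves implicit.
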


  \begin{proof}
    Let $\mathcal F$ denote the collection of all $F$-maps on $E$.
    Since the inclusion $F\to \OS{d}{d^\prime}$
    is completely contractive and $\OS{d}{d^\prime}$
    is injective, the set of all $F$-maps is nonempty.
    The collection $\mathcal F$ is evidently closed
    and bounded and therefore compact.

    To establish the existence of an $E$-minimal $F$-map,
    apply Zorn's lemma. Namely, given a decreasing net $(\rho_\lambda)$
    there exists, by compactness, a convergent subnet $(\rho_\eta)$
    converging to some $\kappa$ (this does not depend upon
    decreasing).  Because the original net was decreasing, it follows
    that $\kappa \le \rho_\lambda$ for all $\lambda$. Hence
    every decreasing chain has a lower bound. An application
    of Zorn's lemma produces a minimal element.
  \end{proof}

 \begin{lemma}
  \label{lem:minimal-projection}
   If $\sigma$ is an  $E$-minimal $F$-map, then
   $\sigma(\sigma(x))=\sigma(x)$ for $x\in E$.
 \end{lemma}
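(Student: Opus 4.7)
The plan is to show that $\sigma|_G = I_G$, where $G := \sigma(E)$; this is a direct restatement of the claim $\sigma^2 = \sigma$. I will use the minimality hypothesis twice: first to turn $\sigma|_G$ into an isometry, and then to force an associated Cesaro projection to equal the identity.

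For each $n \geq 1$, the iterate $\sigma^n$ is again a completely contractive $F$-map (composition of completely contractive maps, and $\sigma^n$ still fixes $F$ pointwise). Contractivity gives the descending chain $\sigma^n \leq_E \sigma^{n-1} \leq_E \cdots \leq_E \sigma$, and minimality of $\sigma$ therefore forces $\sigma \leq_E \sigma^n$ as well, so $\|\sigma^n(x)\| = \|\sigma(x)\|$ for every $x \in E$ and $n \geq 1$. Setting $y = \sigma(x) \in G$, this gives $\|\sigma(y)\| = \|y\|$ for all $y \in G$, so $U := \sigma|_G : G \to G$ is a linear isometry of the finite-dimensional space $G$ onto itself. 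In particular $U$ is power-bounded with all eigenvalues on the unit circle, hence diagonalizable, and the Cesaro averages $P_N := \frac{1}{N}\sum_{k=0}^{N-1} U^k$ converge to the spectral projection $P : G \to G$ onto $\ker(U - I)$. Each $P_N$ is a convex combination of completely contractive maps, so the limit $P$ is completely contractive; and since $F \subseteq G$ and $U|_F = I$, we have $P|_F = I$.

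Now I form $\tilde\sigma := P \circ \sigma : E \to G \subseteq E$. This is a completely contractive $F$-map (composition of completely contractive maps, both fixing $F$) with $\|\tilde\sigma(x)\| \leq \|\sigma(x)\|$, i.e., $\tilde\sigma \leq_E \sigma$. Minimality of $\sigma$ then forces equality: $\|P(\sigma(x))\| = \|\sigma(x)\|$ for every $x \in E$, which after substituting $y = \sigma(x)$ becomes $\|P(y)\| = \|y\|$ for every $y \in G$. So $P$ is an isometric projection on the finite-dimensional space $G$: being isometric it is injective, hence bijective, and then $P^2 = P$ forces $P = I_G$. This says $\ker(U - I) = G$, i.e., $\sigma(y) = y$ for every $y \in G = \sigma(E)$, which is the desired identity $\sigma(\sigma(x)) = \sigma(x)$.

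The main obstacle is that $E$-minimality is only a condition on the scalar norms $\|\sigma(x)\|$ for $x \in E$, and it does not translate in any obvious way into the structural identity $\sigma^2 = \sigma$. The Cesaro averaging is the needed bridge: it manufactures a completely contractive projection $P$ out of the isometry $\sigma|_G$, after which a second application of minimality (to the modified $F$-map $P \circ \sigma$) converts ``$P$ is a contractive projection'' into ``$P$ is an isometry'' and hence into $P = I_G$.
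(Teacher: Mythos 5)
Your proof is correct, and it is built on the same central device as the paper's: the Ces\`aro averages of the iterates of $\sigma$ are completely contractive $F$-maps dominated by $\sigma$, so minimality upgrades the inequalities to norm equalities. The executions diverge after that point, though. The paper never takes a limit: it applies minimality once to $\psi_n(x)=\frac1n\sum_{j=1}^n\sigma^j(x)$ to get $\|\psi_n(y)\|=\|\sigma(y)\|$ for all $y$, then plugs in the single vector $y=x-\sigma(x)$ and uses the telescoping identity $\psi_n(x-\sigma(x))=\frac1n\left(\sigma(x)-\sigma^{n+1}(x)\right)$ to conclude $\|\sigma(x-\sigma(x))\|\le \frac2n\|x\|\to 0$, whence $\sigma^2(x)=\sigma(x)$ by linearity. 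You instead pass to the limit $P=\lim_N P_N$, which requires the finite-dimensional spectral analysis of the isometry $U=\sigma|_{\sigma(E)}$ (unimodular eigenvalues, power-boundedness forcing diagonalizability, Ces\`aro convergence to the spectral projection onto $\ker(U-I)$), and then invoke minimality a second time, applied to $P\circ\sigma$, to force the contractive projection $P$ to be isometric and hence the identity. All of these steps check out --- in particular the observations that $F\subseteq\sigma(E)$, that $P$ fixes $F$, and that an isometric idempotent on a finite-dimensional space is the identity are all used correctly --- so your argument is a valid, if longer, alternative; the telescoping trick is what lets the paper bypass the spectral theory entirely and use minimality only once.
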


 \begin{proof}
   Let $\psi_n$ be the average of the first $n$ powers of $\sigma$,
  \begin{equation*}
     \psi_n(x)=\frac{1}{n} \sum_{j=1}^n \sigma^{j}(x).
  \end{equation*}

   Then $\psi_n$ is an $F$-map and $\|\psi_n(x)\|\le \|\sigma(x)\|$ for
   all $x\in E$,
   so by minimality $\|\psi_n(x)\| =\|\sigma(x)\|$. The same argument
   shows $\|\sigma(\sigma(x))\|=\|\sigma(x)\|$.  Thus,
      \[
      \|\sigma(x-\sigma(x))\|= \|\psi_n(x-\sigma(x))\|
               = \frac{1}{n}\|\sigma(x)-\sigma^{n+1}(x)\|
               \le \frac{2}{n} \|x\|
               \]
    for all $n$. Hence $\sigma(x)=\sigma(\sigma(x))$.
 \end{proof}

  Let $F\ss \OS{d}{d^\prime}$ be given.
  An $E$ such that $F\ss E\ss \OS{d}{d^\prime}$
  is a {\bf concrete injective envelope} \index{injective envelope}
  \index{concrete injective envelope} of $F$ if $E$ is injective
  and if $J$ is injective with $F\ss J \ss E$, then
  $J=E$.

 \begin{theorem}
   Each $F\ss \OS{d}{d^\prime}$ has a concrete injective envelope.
   In fact, the range of each $\OS{d}{d^\prime}$-minimal $F$-map $\sigma$
   is a concrete injective envelope for $F$.
 \end{theorem}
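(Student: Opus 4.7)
The plan is to produce the envelope as the range of a minimal $F$-map on the ambient space. Pick any $\OS{d}{d^\prime}$-minimal $F$-map $\sigma$ (supplied by the previous lemma) and set $E:=\sigma(\OS{d}{d^\prime})$. Two of the three required properties are essentially free: $F\subseteq E$ because $\sigma(f)=f$ for $f\in F$, and $E$ is injective because Lemma \ref{lem:minimal-projection} gives $\sigma\circ\sigma=\sigma$, exhibiting $\sigma$ as a completely contractive projection of $\OS{d}{d^\prime}$ onto $E$; Proposition \ref{prop:injProj} then applies.

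The real content is the minimality of $E$ among injective subspaces sandwiched between $F$ and $\OS{d}{d^\prime}$. Suppose $J$ is injective with $F\subseteq J\subseteq E$; the goal is $J=E$. Injectivity of $J$, applied to the inclusion $J\hookrightarrow\OS{d}{d^\prime}$ and the identity map $\mathrm{id}_J:J\to J$, produces a completely contractive retraction $\pi:\OS{d}{d^\prime}\to J$ with $\pi|_J=\mathrm{id}_J$. Composing with the inclusion $J\subseteq\OS{d}{d^\prime}$, form $\rho:=\pi\circ\sigma:\OS{d}{d^\prime}\to\OS{d}{d^\prime}$. Since $F\subseteq J$ forces $\sigma(f)=f\in J$ and hence $\pi(f)=f$, the composition $\rho$ is an $F$-map; and $\|\rho(x)\|\le\|\sigma(x)\|$ pointwise because $\pi$ is contractive. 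The minimality of $\sigma$ then forces pointwise equality of norms: $\|\pi(\sigma(x))\|=\|\sigma(x)\|$ for every $x\in\OS{d}{d^\prime}$.

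Rewriting this in terms of $y:=\sigma(x)\in E$ says that $\pi$ is isometric, and in particular injective, when restricted to $E$. The standard splitting trick then finishes the argument: for any $y\in E$, the element $y-\pi(y)$ again lies in $E$, and $\pi(y-\pi(y))=\pi(y)-\pi(\pi(y))=0$ since $\pi(y)\in J$ and $\pi|_J=\mathrm{id}_J$; injectivity of $\pi|_E$ forces $y=\pi(y)\in J$. Hence $E\subseteq J$, and so $E=J$. I do not anticipate a serious obstacle here; the only step that requires mild care is verifying that $\rho$ really is an $F$-map on all of $\OS{d}{d^\prime}$, which is exactly where the hypothesis $F\subseteq J$ is consumed.
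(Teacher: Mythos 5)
Your proof is correct and follows essentially the same route as the paper: take the range $E$ of a minimal $F$-map, get injectivity from $\sigma\circ\sigma=\sigma$, and use minimality of $\sigma$ against a completely contractive projection onto $J$ to show $\pi|_E$ is isometric. The only difference is cosmetic: where the paper concludes $J=E$ from finite-dimensionality of the isometric map $E\to J\subseteq E$, you give an explicit splitting argument ($y-\pi(y)\in\ker\pi|_E=\{0\}$), which is equally valid.
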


 \begin{proof}
   Let $\sigma$ be a given $\OS{d}{d^\prime}$-minimal $F$-map 
   and let $E$ denote the range of $\sigma$.
   By Proposition \ref{prop:injProj} and
   Lemma \ref{lem:minimal-projection}, $E$ is injective.
   Suppose $J$ is
   also injective and    $F\ss J\ss E$. There is
   a completely contractive projection $\psi:\OS{d}{d^\prime} \to J$
   (onto).  It follows that $\|\psi(\sigma(x))\|\le \|\sigma(x)\|,$
   and therefore $\|\psi(\sigma(x))\| = \|\sigma(x)\|$
   by minimality of $\sigma$.
   Thus $\psi|_E:E\to J\subseteq E$ is isometric and,
   since the spaces are finite dimensional, $J=E$.

 \end{proof}

 \begin{lemma}
 \label{lem:cinjenv}
   Suppose $E$ is a concrete injective envelope of $F\ss\OS{d}{d^\prime}.$
   If  $\sigma$ is an $E$-minimal $F$-map,
   then $\sigma$ is the identity. 
 \end{lemma}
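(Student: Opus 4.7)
The plan is to show $\sigma$ must be the identity by leveraging minimality twice: first to conclude that the range of $\sigma$ is injective and equals $E$, and then that idempotence forces $\sigma$ to act as the identity on that range. The main tools are the immediately preceding results: $\sigma^2=\sigma$ from Lemma \ref{lem:minimal-projection}, the characterization of injectivity via completely contractive projections in Proposition \ref{prop:injProj}, and the minimality clause in the definition of a concrete injective envelope. I do not expect any serious obstacle; this is a short categorical argument.

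First I would invoke Lemma \ref{lem:minimal-projection} to write $\sigma\circ\sigma=\sigma$, so $\sigma$ is a completely contractive idempotent on $E$. Set $J:=\sigma(E)$. Because $\sigma$ is an $F$-map it fixes $F$ pointwise, which gives $F\subseteq J\subseteq E\subseteq\OS{d}{d^\prime}$. Moreover, by the usual idempotent argument, $\sigma$ fixes every element of its range $J$.

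Next I would verify that $J$ is injective by producing a completely contractive projection from $\OS{d}{d^\prime}$ onto $J$. Since $E$ is injective, Proposition \ref{prop:injProj} supplies a completely contractive projection $\Phi:\OS{d}{d^\prime}\to E$ (so $\Phi$ fixes $E$). Then $\sigma\circ\Phi:\OS{d}{d^\prime}\to J$ is completely contractive as a composition of completely contractive maps, and for $x\in\OS{d}{d^\prime}$ the element $\Phi(x)$ lies in $E$, hence $\Phi(\sigma(\Phi(x)))=\sigma(\Phi(x))$, giving
\begin{equation*}
(\sigma\circ\Phi)^2(x)=\sigma\bigl(\Phi(\sigma(\Phi(x)))\bigr)=\sigma\bigl(\sigma(\Phi(x))\bigr)=\sigma(\Phi(x))=(\sigma\circ\Phi)(x).
\end{equation*}
Thus $\sigma\circ\Phi$ is a completely contractive projection onto $J$, and Proposition \ref{prop:injProj} again yields that $J$ is injective.

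To finish, I would use that $F\subseteq J\subseteq E$ with $J$ injective, so the defining minimality of the concrete injective envelope forces $J=E$; that is, $\sigma$ is surjective. Combined with $\sigma^2=\sigma$, this gives the conclusion: for any $y\in E$, writing $y=\sigma(x)$ yields $\sigma(y)=\sigma^2(x)=\sigma(x)=y$, so $\sigma=\mathrm{id}_E$.
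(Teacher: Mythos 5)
Your proof is correct and follows essentially the same route as the paper's: use Lemma \ref{lem:minimal-projection} to get idempotence, compose $\sigma$ with a completely contractive projection onto $E$ to show $J=\sigma(E)$ is injective, and invoke the minimality clause of the concrete injective envelope to force $J=E$, whence $\sigma$ fixes its range. Your explicit verification that $\sigma\circ\Phi$ is idempotent is a detail the paper leaves implicit, but the argument is the same.
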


 \begin{proof}
    Let $J=\sigma(E).$ Then $F\ss J\ss E$.
    From Lemma \ref{lem:minimal-projection}
    $\sigma: E \to J$ is a completely
    contractive projection. There is a completely contractive
    projection $\Phi: \OS{d}{d^\prime} \to E$ and so $\sigma \circ \Phi$
    is a completely contractive projection $\OS{d}{d^\prime} \to J.$
    Hence $J$ is injective. By minimality
    $J=E$.  By Lemma \ref{lem:minimal-projection},
    $\sigma$ is the identity.
 \end{proof}

 \begin{lemma}
  \label{lem:F-map-auto-identity}
   Suppose $E$ is a concrete injective envelope of $F\ss\OS{d}{d^\prime}$.
   If $\rho$ is an $F$-map on $E$, then $\rho$ is the identity.
 \end{lemma}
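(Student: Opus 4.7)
My plan is to apply Lemma \ref{lem:cinjenv} a second time, this time directly to the given $\rho$, so the task reduces to showing that \emph{every} $F$-map on $E$ is automatically an $E$-minimal $F$-map.

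First I would invoke the compactness/Zorn's-lemma statement preceding Lemma \ref{lem:minimal-projection} to produce an $E$-minimal $F$-map $\sigma$ with $\sigma\le_E \rho$. Since $E$ is a concrete injective envelope of $F$, Lemma \ref{lem:cinjenv} forces $\sigma=\mathrm{id}_E$. Thus
$$\|x\| \;=\; \|\sigma(x)\| \;\le\; \|\rho(x)\| \;\le\; \|x\|, \qquad x\in E,$$
the final inequality coming from the fact that $\rho$ is completely contractive (in particular contractive). Hence $\|\rho(x)\|=\|x\|$ for every $x\in E$; in other words, $\rho$ is norm-preserving on $E$.

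The key observation is that this argument made no use of any special property of $\rho$ beyond its being an $F$-map, so \emph{every} $F$-map on $E$ is isometric in norm. Now suppose $\tau$ is any $F$-map on $E$ with $\tau\le_E\rho$. Then $\|\tau(x)\|\le \|\rho(x)\|=\|x\|=\|\tau(x)\|$, so $\tau$ and $\rho$ are equinormal. This is precisely to say that $\rho$ itself is $E$-minimal. A second application of Lemma \ref{lem:cinjenv}, now with $\rho$ in the role of $\sigma$, yields $\rho=\mathrm{id}_E$.

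The only conceptual obstacle is recognizing the bootstrap: the lemma producing a minimal $F$-map below $\rho$, combined with the rigidity of minimal $F$-maps (Lemma \ref{lem:cinjenv}), already forces norm-preservation for all $F$-maps, and from there the pre-order $\le_E$ degenerates on $F$-maps so that \emph{every} $F$-map is minimal. Once that is seen, the conclusion is immediate and no further machinery (no Cesaro averaging, no additional use of injectivity) is required.
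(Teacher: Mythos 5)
Your proof is correct and follows essentially the same route as the paper: take an $E$-minimal $F$-map $\sigma\le_E\rho$, use Lemma \ref{lem:cinjenv} to get $\sigma=\mathrm{id}_E$, deduce $\|y\|=\|\sigma(y)\|\le\|\rho(y)\|\le\|y\|$ so that $\rho$ is itself $E$-minimal, and apply Lemma \ref{lem:cinjenv} once more. Your extra paragraph spelling out why norm-preservation makes $\rho$ minimal in the preorder is just an explicit version of the step the paper leaves implicit.
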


 \begin{proof}
   Choose a minimal $F$-map $\sigma\le \rho$.
   By the previous lemma, $\sigma$ is the identity on $E$. Hence,
   for $y\in E$, we find
   $\|y\|=\|\sigma(y)\|\le \|\rho(y)\|\le \|y\|$ and therefore
   $\rho$ is an $E$-minimal $F$-map.  Another application of
   the previous lemma shows $\rho$ is the identity.
  \end{proof}

 \begin{corollary}
  \label{cor:injective-envelope-unique}
   If $E_1,E_2 \ss \OS{d}{d^\prime}$ are both concrete injective
   envelopes for $F$, then there exists a completely isometric isomorphism
   $\phi:E_1\to E_2$.
 \end{corollary}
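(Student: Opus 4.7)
The plan is a standard categorical yoga using the two lemmas just established, especially Lemma \ref{lem:F-map-auto-identity}. First, since $E_2$ is injective and the inclusion $\iota_2: F \hookrightarrow E_2$ is completely contractive (indeed, completely isometric), I would apply the definition of injectivity to the inclusion $F \hookrightarrow E_1$ to obtain a completely contractive extension $\alpha: E_1 \to E_2$ of $\iota_2$. Symmetrically, using the injectivity of $E_1$, extend $\iota_1: F \hookrightarrow E_1$ to a completely contractive $\beta: E_2 \to E_1$.

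Next, consider the composition $\beta \circ \alpha: E_1 \to E_1$. This is a completely contractive map (composition of completely contractive maps), and it fixes $F$ pointwise, because for $f \in F$ we have $\alpha(f) = f \in E_2$ and then $\beta(f) = f \in E_1$. Thus $\beta \circ \alpha$ is an $F$-map on $E_1$ in the sense defined above. By Lemma \ref{lem:F-map-auto-identity}, applied to the concrete injective envelope $E_1$, we conclude $\beta \circ \alpha = \mathrm{id}_{E_1}$. The same reasoning with the roles of $E_1$ and $E_2$ swapped shows $\alpha \circ \beta = \mathrm{id}_{E_2}$.

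Hence $\alpha$ is a bijection with two-sided inverse $\beta$, and both $\alpha$ and $\alpha^{-1} = \beta$ are completely contractive. A standard observation then gives that $\alpha$ is completely isometric: for any $n$ and $Z \in E_1 \otimes \Mnns$,
\[
\|Z\| = \|\beta_n(\alpha_n(Z))\| \le \|\alpha_n(Z)\| \le \|Z\|,
\]
so equality holds throughout. This produces the desired completely isometric isomorphism $\phi := \alpha: E_1 \to E_2$.

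There is essentially no real obstacle here; the construction is purely formal once Lemma \ref{lem:F-map-auto-identity} is in hand. The only minor point requiring care is ensuring the extensions $\alpha$ and $\beta$ are maps with codomain $E_2$ and $E_1$ respectively (rather than merely into $\OS{d}{d^\prime}$), but this is precisely what injectivity of the target space provides.
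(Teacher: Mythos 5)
Your proof is correct and follows essentially the same route as the paper: use injectivity twice to produce completely contractive maps $\alpha:E_1\to E_2$ and $\beta:E_2\to E_1$ fixing $F$, then apply Lemma \ref{lem:F-map-auto-identity} to the composition to conclude it is the identity. Your version is slightly more explicit (you verify both compositions and spell out why a completely contractive bijection with completely contractive inverse is completely isometric), but the idea is identical.
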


 \begin{proof}
    Since $E_1$ is injective, apply the definition of injective
    to $F\ss E_2$ and the inclusion mapping of $F$ into
    $E_1$ produces a completely contractive $\varphi:E_2\to E_1$
    which is the identity on $F$. Similarly there exists a
    completely contractive $\psi:E_1\to E_2$ which is the identity
    on $F$. The composition $\psi\circ\varphi$ is then an $F$-map on $E_1$
    and is therefore the identity mapping by Lemma \ref{lem:F-map-auto-identity}.
 \end{proof}

 \begin{remark}\rm
   The corollary allows one to define {\bf the} concrete injective envelope of $F$;
   i.e., it is unique in the category whose objects are $F\ss \OS{d}{d^\prime}$
   and whose morphisms are completely contractive maps.
 \end{remark}

The following corollary plays an essential role in what follows.

  \begin{corollary}
   \label{cor:essential}
    Let $E$ be a concrete injective envelope of $F\ss \OS{d}{d^\prime}$.
    If $\psi: E\to \OS{\ell}{\ell^\prime}$ is completely contractive
    and $\psi|_F$ is completely isometric,  then $\psi$
    is completely isometric.  Moreover, no proper super-space of $E$
    has this property.
  \end{corollary}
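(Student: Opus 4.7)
The plan is to exploit the injectivity of $E$ to manufacture a completely contractive left inverse for $\psi$, and then invoke the $F$-map rigidity from Lemma \ref{lem:F-map-auto-identity} to force that left inverse to be the identity on $E$. Concretely, since $\psi|_F$ is completely isometric it is in particular a linear isomorphism onto $\psi(F)$, so its inverse $\eta:=(\psi|_F)^{-1}:\psi(F)\to F\hookrightarrow E$ is completely contractive (in fact completely isometric). Because $\psi(F)\ss \OS{\ell}{\ell^\prime}$ and $E$ is injective, $\eta$ extends to a completely contractive $\tilde\eta:\OS{\ell}{\ell^\prime}\to E$. The composition $\tilde\eta\circ\psi:E\to E$ is completely contractive and fixes $F$ pointwise, since $(\tilde\eta\circ\psi)(f)=\eta(\psi(f))=f$; hence it is an $F$-map on the concrete injective envelope $E$, and Lemma \ref{lem:F-map-auto-identity} forces $\tilde\eta\circ\psi=\mathrm{id}_E$. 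A two-sided norm squeeze at each matrix level,
\[
\|x\| \;=\; \|(\tilde\eta\circ\psi)_n(x)\| \;\le\; \|\psi_n(x)\| \;\le\; \|x\|,
\]
then yields $\|\psi_n(x)\|=\|x\|$, so $\psi$ is completely isometric.

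For the maximality clause, I would argue by contradiction: suppose $E\subsetneq E'\ss \OS{d}{d^\prime}$ contains $F$ and also enjoys the stated property. By Proposition \ref{prop:injProj} there is a completely contractive projection $\Phi:\OS{d}{d^\prime}\to E$, and since $\Phi^2=\Phi$ is onto $E$ we have $\Phi|_E=\mathrm{id}_E$. Let $\psi:E'\to \OS{d}{d^\prime}$ denote the restriction $\Phi|_{E'}$ composed with the inclusion $E\hookrightarrow \OS{d}{d^\prime}$. Then $\psi$ is completely contractive and $\psi|_F=\mathrm{id}_F$ is completely isometric, so the supposed property of $E'$ would force $\psi$ to be completely isometric, hence linearly injective, giving $\dim\psi(E')=\dim E'$. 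But $E=\Phi(E)\ss \Phi(E')\ss E$ forces $\psi(E')=E$, whence $\dim E'=\dim E$ and $E'=E$, contradicting $E\subsetneq E'$.

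The key conceptual step is the first part: recognizing that the completely contractive ``inverse'' of $\psi$ obtained from injectivity of $E$ is automatically an $F$-map, so Lemma \ref{lem:F-map-auto-identity} immediately trivializes it. Once that identification is made the norm squeeze is automatic, and the dimension-counting argument for the maximality clause is essentially routine given Proposition \ref{prop:injProj} and the fact that a projection acts as the identity on its range.
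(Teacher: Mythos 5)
Your proof is correct and follows essentially the same route as the paper: for the first part you build the completely contractive left inverse via injectivity of $E$ and invoke Lemma \ref{lem:F-map-auto-identity} exactly as the paper does, and for the maximality clause your projection-plus-dimension-count is just a slightly more explicit version of the paper's observation that the identity on $E$ extends to a completely contractive, necessarily non-isometric, map on any proper super-space.
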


  \begin{proof}
      Let $F^\prime=\psi(F)$.  Because $\psi$ is a complete isometry,
      $F^\prime\ss \OS{\ell}{\ell^\prime}$.  Let $\phi:F^\prime \to E$ denote
      the mapping $\psi(f)\mapsto f$ which is completely contractive
      (actually isometric). Since $E$ is injective, it follows 
      that there is a completely contractive $\tau: \OS{\ell}{\ell^\prime} \to E$
      extending $\phi$. The composition $\rho=\tau\circ\psi$ is
      an $F$-map on $E$  (completely contractive and the identity on $F$).
      Thus $\rho$ is  the identity on $E$ 
      by Lemma \ref{lem:F-map-auto-identity}.
      It follows that $\psi$ must be completely
      isometric.

      For the second statement, given $E<J\ss\OS{\ell}{\ell'}$, by injectivity
      the identity map $E\to E$ extends to a completely contractive map
      $J\to E\ss J$ which is clearly not completely isometric.
  \end{proof}

 \begin{remark}\label{rem:ess}\rm
    This corollary says that studying completely isometric mappings
    on $F$ is the same as studying completely isometric mappings
    on an injective envelope $E$ of $F$, since any completely
    isometric mapping $F\to \OS{\ell}{\ell^\prime}$ extends to a
    completely isometric mapping $E\to\OS{\ell}{\ell^\prime}$.
 \end{remark}

 \subsection{Operator Systems and Injectivity}
   Recall
the notion of an operator system $\cS$ defined in
Section \ref{sec:prelimsO}.
 Because a
   unital map $\Phi:\OS{m}{m}\to \cS$ is
   completely contractive if and only if it is
   completely positive, an operator system
   is injective (as an operator space) if and only
   if it is the range of a completely
   positive projection (which is automatically unital).

 \begin{proposition}[Choi-Effros, cf.~{\protect\cite[Theorem 15.2]{Pau}}]
    An injective operator system $\cS\ss\OS{m}{m}$ is completely isometrically isomorphic
    to a $C^*$-algebra under the multiplication $a\circ b= \Phi(ab)$,
    where $\Phi$ is a given completely positive projection from $\OS{m}{m}$
    onto $\cS$.
 \end{proposition}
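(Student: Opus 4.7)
The plan is to verify directly that $(\cS, \circ)$ with $a \circ b := \Phi(ab)$ satisfies the axioms of a $C^*$-algebra, keeping the involution, norm, and matrix-norm structure inherited from $\OS{m}{m}$. Several axioms I would dispatch immediately: the identity $I \in \cS$ serves as unit, since $I \circ a = \Phi(Ia) = \Phi(a) = a$; the involution interacts correctly, since completely positive maps preserve self-adjointness, giving $(a \circ b)^* = \Phi((ab)^*) = \Phi(b^* a^*) = b^* \circ a^*$; and submultiplicativity $\|a \circ b\| \le \|a\|\|b\|$ follows from contractivity of $\Phi$. For the $C^*$-identity $\|a^* \circ a\| = \|a\|^2$, I would invoke the Kadison-Schwarz inequality for unital completely positive maps: since $\Phi(a) = a$ for $a\in\cS$, one has $\Phi(a^*a) \ge \Phi(a)^*\Phi(a) = a^*a$, so $\|\Phi(a^*a)\| \ge \|a^*a\| = \|a\|^2$; combined with contractivity $\|\Phi(a^*a)\| \le \|a^*a\|$, this yields equality.

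The main obstacle is associativity: $\Phi(\Phi(ab)c) = \Phi(a \Phi(bc))$ for $a,b,c \in \cS$. My strategy would be to establish the bimodule-like identities
\begin{equation*}
\Phi(sx) = \Phi(s\Phi(x)) \quad \text{and} \quad \Phi(xs) = \Phi(\Phi(x)\, s)
\end{equation*}
for all $s \in \cS$ and $x \in \OS{m}{m}$; applying the first with $s = a$, $x = bc$ and the second with $s = c$, $x = ab$, both sides of associativity reduce to $\Phi(abc)$. To prove the bimodule property, set $y := x - \Phi(x)$, so $\Phi(y) = 0$, reducing the task to showing $\Phi(sy) = 0 = \Phi(ys)$. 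The plan is to apply the second amplification $\Phi_2$ (completely positive by hypothesis) to the positive $2\times 2$ block matrix
\begin{equation*}
\begin{pmatrix} s^* \\ y^* \end{pmatrix} \begin{pmatrix} s & y \end{pmatrix} = \begin{pmatrix} s^*s & s^*y \\ y^*s & y^*y \end{pmatrix} \ge 0,
\end{equation*}
and to combine the resulting Schur-complement bound $\|\Phi(y^*s)\|^2 \le \|\Phi(s^*s)\|\cdot \|\Phi(y^*y)\|$ with the idempotency $\Phi^2 = \Phi$ in a bootstrap argument to force the off-diagonal entries to vanish. The delicate point is that $s\in \cS$ need not lie in the multiplicative domain of $\Phi$ (typically $\Phi(s^*s) \ne s^*s$), so Choi's multiplicative-domain theorem does not apply directly; as a backup, one may pass to a minimal Stinespring dilation $\Phi(x) = V^* \pi(x) V$, realize $\cS$ as the compression $V^* \pi(\OS{m}{m}) V$, and verify associativity of the compressed product there.

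Once associativity is in hand, $(\cS, \circ)$ is a finite-dimensional unital Banach $*$-algebra satisfying the $C^*$-identity, hence a $C^*$-algebra. The completely isometric nature of the identification is then automatic: the matrix norms on $\cS$ are inherited unchanged from $\OS{m}{m}$, so the identity map from $\cS$ viewed as an operator system to $\cS$ equipped with the new $C^*$-product is trivially a complete isometry.
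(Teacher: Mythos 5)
The paper itself offers no proof of this proposition; it is quoted from Paulsen's book. Your outline follows the standard Choi--Effros argument, and the routine axioms are handled correctly: the unit, the involution, submultiplicativity, and the $C^*$-identity via Kadison--Schwarz are all fine, as is the reduction of associativity to the statement $\Phi(sy)=0=\Phi(ys)$ for $s\in\cS$ and $y\in\ker\Phi$.

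The gap is in how you propose to prove that statement. Applying $\Phi_2$ to the rank-one positive matrix $\left(\begin{smallmatrix} s^*s & s^*y\\ y^*s & y^*y\end{smallmatrix}\right)$ yields only $\|\Phi(y^*s)\|^2\le\|\Phi(s^*s)\|\,\|\Phi(y^*y)\|$, and $\Phi(y^*y)$ is typically nonzero even when $\Phi(y)=0$ (take $\Phi$ the conditional expectation of $2\times 2$ matrices onto the diagonal and $y=e_{12}$); moreover, re-applying $\Phi_2$ to the image of that matrix returns the same matrix by idempotency, so no bootstrap from this positivity alone can force the off-diagonal entries to vanish. The missing ingredient is to interpose the Kadison--Schwarz inequality for $\Phi_2$ \emph{before} the second application of $\Phi_2$: with $X=\left(\begin{smallmatrix} s& y\\ 0&0\end{smallmatrix}\right)$ one has $\Phi_2(X)=\left(\begin{smallmatrix} s&0\\0&0\end{smallmatrix}\right)$ and
\[
0\le \Phi_2(X^*X)-\Phi_2(X)^*\Phi_2(X)=\begin{pmatrix}\Phi(s^*s)-s^*s & \Phi(s^*y)\\ \Phi(y^*s)&\Phi(y^*y)\end{pmatrix};
\]
applying $\Phi_2$ once more annihilates the $(1,1)$ entry (idempotency gives $\Phi(\Phi(s^*s))-\Phi(s^*s)=0$) while fixing the remaining entries, and a positive semidefinite block matrix with a vanishing diagonal corner has vanishing adjacent off-diagonal blocks, whence $\Phi(s^*y)=0$. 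Since $\cS$ is self-adjoint this yields the left-handed identity, and the right-handed one is symmetric. Your Stinespring ``backup'' as stated does not substitute for this: compressing $\pi$ by $V$ recovers the linear structure of $\cS$, but associativity of the compressed product is precisely the identity in question. Finally, the closing claim that complete isometry is ``automatic'' needs one more line: one must check that the norm each $M_n(\cS)$ inherits from $\OS{m}{m}$ is the $C^*$-norm of $M_n(\cS)$ with the product induced by $\Phi_n$, which follows by running the same axioms at every matrix level.
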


\subsection{Injective envelopes and corners of matrix algebras}
  A subspace $F\ss\OS{d}{d^\prime}$ naturally embeds
  in the operator system
$$
  \mathcal S_F = \{ \begin{pmatrix} \lambda I_d & b^* \\ a & \eta I_{d'} \end{pmatrix} : \lambda, \eta \in \mathbb C,
 \ \ a,b\in F \} \ss \OS{d+d^\prime}{d+d^\prime}.
$$
  Let $\Gamma:\OS{d+d^\prime}{d+d^\prime} \to \OS{d}{d^\prime}$
  denote the completely contractive projection onto the lower left-hand corner; i.e.,
  \beq\label{eq:cornerProjj}
  \Gamma(\begin{pmatrix} x_{11} & x_{12} \\ a & x_{22}\end{pmatrix}) = a.
 \eeq

\begin{proposition}
 \label{prop:from-system-to-space}
  Suppose $E\le\OS{d^\prime}{d}$ is  an injective operator space.
   There exists subspaces 
  $\cA$ and $\cB$ of $\OS{d}{d}$ and $\OS{d^\prime}{d^\prime}$ respectively
  such that the concrete  injective envelope $\cE$ of $S_E$ has the form
\[
 \cE =\{ \begin{pmatrix} a & f^* \\ e & b \end{pmatrix} : a\in \cA, \
 \ b\in \cB, \ \ e, f\in E\}.
\]
\end{proposition}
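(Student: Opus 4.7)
Plan: The key tool is the circle action on $\OS{d+d^\prime}{d+d^\prime}$ by conjugation with the unitaries $U_\theta := \mathrm{diag}(e^{i\theta}I_d,\, I_{d^\prime})$. Because $E$ is a complex subspace (closed under scalar multiplication), this action preserves $\mathcal S_E$, sending $\bigl(\begin{smallmatrix}\lambda I & b^* \\ a & \eta I\end{smallmatrix}\bigr)$ to $\bigl(\begin{smallmatrix}\lambda I & e^{i\theta}b^* \\ e^{-i\theta}a & \eta I\end{smallmatrix}\bigr)$, which is again in $\mathcal S_E$. The plan is: (i) produce a concrete injective envelope $\cE$ of $\mathcal S_E$ inside $\OS{d+d^\prime}{d+d^\prime}$ that is $U_\theta$-invariant, (ii) read off the block form by Fourier analysis, and (iii) use the injectivity of $E$ to collapse the off-diagonal down to $E$ itself.

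For step (i), I work inside the subclass $\mathcal M$ of $U_\theta$-equivariant $\mathcal S_E$-maps on $\OS{d+d^\prime}{d+d^\prime}$, i.e.\ completely contractive $\sigma$ with $\sigma|_{\mathcal S_E}=\mathrm{id}$ and $\sigma(U_\theta x U_\theta^*)=U_\theta\sigma(x)U_\theta^*$ for all $\theta$. Averaging any $\mathcal S_E$-map $\sigma_0$ against the circle via $\sigma_0 \mapsto \int U_\theta^*\sigma_0(U_\theta x U_\theta^*)U_\theta\,d\theta/(2\pi)$ yields an element of $\mathcal M$, so $\mathcal M$ is nonempty; and $\mathcal M$ is closed under composition, finite averages, and the $\le$ ordering, hence compact. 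Running the arguments of Section~\ref{sec:prelims} inside $\mathcal M$ — Zorn for a $\le$-minimal $\sigma \in \mathcal M$, then the power-averaging of Lemma~\ref{lem:minimal-projection} to show $\sigma\circ\sigma=\sigma$ — produces an idempotent $\sigma$. The range $\cE := \sigma(\OS{d+d^\prime}{d+d^\prime})$ is injective by Proposition~\ref{prop:injProj} and $U_\theta$-invariant by equivariance; an adaptation of the essentialness argument of Corollary~\ref{cor:essential} shows $\cE$ is actually minimal among \emph{all} injective superspaces of $\mathcal S_E$, not just the invariant ones.

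For step (ii), invariance of $\cE$ gives, for any $y=\bigl(\begin{smallmatrix}y_{11} & y_{12} \\ y_{21} & y_{22}\end{smallmatrix}\bigr)\in\cE$, that $U_\theta y U_\theta^* = \bigl(\begin{smallmatrix}y_{11} & e^{i\theta}y_{12} \\ e^{-i\theta}y_{21} & y_{22}\end{smallmatrix}\bigr)\in\cE$ for every $\theta$. Since $\cE$ is a closed linear subspace, the Fourier integrals $\int e^{-ik\theta}U_\theta y U_\theta^*\,d\theta/(2\pi)$ for $k=0,\pm 1$ also lie in $\cE$, so each of $\mathrm{diag}(y_{11},y_{22})$, $\bigl(\begin{smallmatrix}0 & y_{12} \\ 0 & 0\end{smallmatrix}\bigr)$, and $\bigl(\begin{smallmatrix}0 & 0 \\ y_{21} & 0\end{smallmatrix}\bigr)$ is in $\cE$. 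Letting $\cA,\cB,E',F'$ be the subspaces of the respective corner positions realized in $\cE$, we obtain $\cE = \bigl(\begin{smallmatrix}\cA & F' \\ E' & \cB\end{smallmatrix}\bigr)$. By Choi-Effros, $\cE$ carries a $C^*$-algebra structure and is in particular $*$-closed, forcing $F'=(E')^*$ and making $\cA,\cB$ self-adjoint; also $E\subset E'$ follows from $\mathcal S_E\subset\cE$.

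For step (iii), since $E$ is injective and $E\subset E'$, the identity $\mathrm{id}_E$ extends to a completely contractive projection $P:E'\to E$. Paulsen's off-diagonal trick of Section~\ref{sec:prelimsO} turns this into a unital completely positive (hence completely contractive) map
\[
\widetilde\psi : \mathcal S_{E'} \to \mathcal S_E, \qquad
\begin{pmatrix}\lambda I & b^* \\ a & \eta I\end{pmatrix} \longmapsto
\begin{pmatrix}\lambda I & P(b)^* \\ P(a) & \eta I\end{pmatrix},
\]
which is the identity on $\mathcal S_E\subset\mathcal S_{E'}$. Because $\mathcal S_{E'}\subset\cE$ and $\cE$ is injective, $\widetilde\psi$ (composed with $\mathcal S_E\hookrightarrow\cE$) extends to a completely contractive $\psi:\cE\to\cE$ that fixes $\mathcal S_E$. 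Lemma~\ref{lem:F-map-auto-identity} applied to the concrete injective envelope $\cE$ of $\mathcal S_E$ forces $\psi=\mathrm{id}_\cE$, hence $\widetilde\psi=\mathrm{id}_{\mathcal S_{E'}}$; this says $P(a)=a$ for every $a\in E'$, so $E'=E$ and the block form is as claimed. The main obstacle is the last claim of step (i) — that the equivariant-minimal $\cE$ really is minimal among all injective superspaces of $\mathcal S_E$, not merely among the $U_\theta$-invariant ones — which requires carefully invoking essentialness and a dimension-counting argument applied to a completely contractive projection from $\cE$ onto a hypothetical smaller injective $J\supset\mathcal S_E$.
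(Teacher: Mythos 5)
Your route is genuinely different from the paper's: you try to manufacture a circle-invariant injective envelope by averaging over conjugation by $U_\theta=\operatorname{diag}(e^{i\theta}I_d,I_{d'})$ and then read off the corners by Fourier analysis, whereas the paper takes an arbitrary concrete injective envelope $\cE$, writes a unital completely contractive projection onto it in Stinespring form $\Psi(x)=V^*\pi(x)V$, and uses $\Psi(p_j)=p_j$ to force $V$ block-diagonal, so that $\Psi(p_jxp_k)=p_j\Psi(x)p_k$ and the corner decomposition of $\cE$ falls out for free. Your steps (ii) and (iii) are fine, and (iii) is essentially identical to the paper's argument that $\cA_{2,1}=E$ via Lemma \ref{lem:F-map-auto-identity}.

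The problem is the obstacle you yourself flag at the end of step (i), and it is a genuine gap, not a routine verification. Your Zorn argument is run inside the class $\mathcal M$ of \emph{equivariant} $\mathcal S_E$-maps, so the resulting $\sigma$ is only minimal relative to other equivariant maps, and its range $\cE$ is only known to be minimal among $U_\theta$-invariant injective superspaces of $\mathcal S_E$. The natural attempt to upgrade this — given an injective $J$ with $\mathcal S_E\subseteq J\subsetneq\cE$ and a completely contractive projection $\psi$ onto $J$, average $\psi\circ\sigma$ over the circle to land back in $\mathcal M$ and invoke minimality — only yields that the \emph{averaged} map is isometric on $\cE$. Since the operator norm ball of a matrix space is not strictly convex, this does not imply that $\psi$ itself is isometric on $\cE$, so you cannot conclude $J=\cE$ by dimension count. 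This matters downstream: step (iii) applies Lemma \ref{lem:F-map-auto-identity} to $\cE$ \emph{as a concrete injective envelope} of $\mathcal S_E$; if $\cE$ is merely an invariant injective superspace, that rigidity is unavailable, the extension $\psi$ of $\widetilde\psi$ need not be the identity, and $E'$ could be strictly larger than $E$. To close the argument you either need a genuinely new idea for why an invariant minimal space is absolutely minimal, or you should abandon the invariant construction and instead show directly (as the paper does via Stinespring, or equivalently via the multiplicative-domain property of a unital completely positive projection at the projections $p_1,p_2$) that \emph{every} unital completely contractive projection onto a concrete injective envelope of $\mathcal S_E$ automatically preserves the four corners.
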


\begin{proof}
  Let $n=d+d^\prime$.  Let $\cE\le \OS{n}{n}$ be an injective
  envelope of $S_E$. 
  There is a completely contractive unital projection $\Psi:
  \OS{n}{n}\to \cE$.  By Stinespring's Theorem [Pau, Theorem 4.1], there exists a (finite
  dimensional) Hilbert space $\cH$,  a representation
  $\pi:\OS{n}{n} \to \mathcal B(\cH)$, and an isometry $V:\mathbb C^n\to
  \cH$ such that
\[
  \Psi(x) = V^* \pi(x) V.
\]
  Consider the $n\times n$ matrices
\[
   p_1 =\begin{pmatrix} I_d & 0\\0 & 0 \end{pmatrix}, \ \
   p_2=\begin{pmatrix} 0 & 0 \\ 0 & I_{d^\prime} \end{pmatrix}.
\]
   It follows that $P_j=\pi(p_j)$ are projections with $P_1+P_2=I$.
   Hence we can decompose $\cH = \cH_1\oplus \cH_2$ with $\cH_j$ the
   range of $P_j$.  With respect to this decomposition, and the
   natural decomposition of $\mathbb C^n =\mathbb C^d \oplus
   \mathbb C^{d^\prime},$ express $V$ as $V=(V_{j,k})$. Since
   $p_1\in S_E\le \cE$ we have
\[
 \Psi(p_1)=\begin{pmatrix} I_{d} & 0 \\ 0 & 0 \end{pmatrix} 
   = V^* \begin{pmatrix} I_{\cH_1} & 0 \\ 0 & 0\end{pmatrix} V.
\]
  It follows that $V^*_{12} V_{12}=0$. Hence $V_{12}=0$.  A similar argument shows
  $V_{21}=0$.   

  Let $V_j = P_jV$.  At this point we have 
\[
  \Psi(x) = (V_1+V_2)^* \pi(x) (V_1+V_2).
\] 
  Let $\cA_{j,k}$ denote the range of the mapping $x\mapsto V_j^* \pi(p_j  x
  p_k)V_k$.  Since $V_j^* \pi( p_j x p_k) V_k= \Psi(p_j x p_k)\in
  \cE$, it follows that $\cA_{j,k}\subset \cE$.   On the other hand,
 any $x$ can be written as $\sum p_j x p_k$  and  $V_\ell^* \pi(p_j x
 p_k) V_m=0$ unless $(\ell,m)=(j,k)$. Hence  $\cE$ has the
 desired representation.  

  Now, with $\Gamma$ as in (2.3),  the mapping $\Delta=\Gamma|_{\cE}:\cE\to \cA_{2,1}$ is a completely contractive
  projection onto $\cA_{2,1}$ and thus $\cA_{2,1}$ is an injective
  operator space.   Since $S_E\subset \cE$, it follows that
  $E\subset \cA_{2,1}$.  To see that $E=\cA_{2,1}$, note
  that there is a completely contractive projection $\varphi:\cA_{2,1}\to
  E$. Hence the mapping $\Phi:S_{\cA_{2,1}}\to S_{\cA_{2,1}}$ defined by
\[
  \begin{pmatrix} \lambda  & f^* \\ e & \mu \end{pmatrix} 
    \mapsto \begin{pmatrix} \lambda & \varphi(f)^* \\
       \varphi(e) &  \mu \end{pmatrix}
\] 
  is unital and completely positive. 
  Thus, because $\cE\supseteq S_{\cA_{2,1}}$ is a concrete injective envelope of $S_E$, the map
  $\Phi$ extends uniquely as a completely contractive map on
  $S_{\cA_{2,1}}$.  Since $\Phi$  is also the identity on $S_E$ it
  must be the identity on $\cA_{2,1}$. 
  The conclusion  $\cA_{2,1}=E$ follows.  $\square$
\end{proof}

\section{Completely isometric maps $\oplus \OS{d_j}{d_j^\prime}\to\OS{n}{n^\prime}$}
 \label{sec:ci-corners}

  In this section we classify completely isometric maps $\Psi$
   from a direct sum of matrix spaces into matrices,
   which is  a special case of our main result appearing
     in Section \ref{sec:cimaps} below.
   More precisely,  we consider
   a completely isometric
   $\Psi:\oplus_j \OS{d_j}{d_j^\prime}\to\OS{n}{n^\prime}$
   for $d_j,d_j',n,n'\in\NN$.

  We note that the results here, and in the next section, are very much in the spirit of,
  and flow from the same considerations, and can be made to follow
  from the results, as in \cite{BH1,BH2,BL} when specialized to our
  concrete  setting.

\begin{theorem}
 \label{thm:structureCI}
  Let $d=\sum d_j$, $d'=\sum d_j'$.
    If $\Psi:\oplus_j \OS{d_j}{d_j^\prime}\to\OS{n}{n^\prime}$ is completely
   isometric, then there exists a completely contractive mapping
   $\psi:\oplus_j \OS{d_j}{d_j^\prime}\to\OS{n-d}{n^\prime-d'}$ and
   unitaries $U:\mathbb C^{n'}\to \CC^{n'}$ and $V:\CC^{n}\to\CC^{n}$
    such that
 \begin{equation}\label{eq:repCompIso}
    \Psi(x)= U \begin{pmatrix} x & 0 \\ 0 & \psi(x) \end{pmatrix} V^*.
 \end{equation}
\end{theorem}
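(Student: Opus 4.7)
I would prove Theorem 3.1 by reducing the classification of completely isometric maps on $F := \oplus_j \OS{d_j}{d_j^\prime}$ to a Stinespring-type dilation statement via the off-diagonal and Arveson extension machinery, then using the isometric hypothesis to extract the identity block.

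\textbf{Setup.} Let $d = \sum_j d_j$, $d^\prime = \sum_j d_j^\prime$, and embed $F \subseteq \OS{d}{d^\prime}$ as block-diagonal matrices. Applying the off-diagonal construction \eqref{eq:corner} to $\Psi$ (which is completely contractive) produces a unital completely positive map
\[
\phi : \cS_F \to \OS{n+n^\prime}{n+n^\prime}, \qquad \phi\begin{pmatrix} \lambda I_d & b^* \\ a & \mu I_{d^\prime}\end{pmatrix} = \begin{pmatrix} \lambda I_n & \Psi(b)^* \\ \Psi(a) & \mu I_{n^\prime}\end{pmatrix}.
\]
Arveson's theorem (Theorem 2.3) extends $\phi$ to a unital completely positive map $\tilde\phi : \OS{d+d^\prime}{d+d^\prime} \to \OS{n+n^\prime}{n+n^\prime}$, and Stinespring's theorem produces a finite-dimensional Hilbert space $\cK$ and an isometry $W : \CC^{n+n^\prime} \to \CC^{d+d^\prime} \otimes \cK$ with
\[
\tilde\phi(x) = W^*(x \otimes I_\cK)\,W,
\]
since every $*$-representation of the simple matrix algebra $\OS{d+d^\prime}{d+d^\prime}$ is unitarily equivalent to a multiple of its identity representation.

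\textbf{Splitting the isometry.} The diagonal projections $p = I_d \oplus 0$ and $q = 0 \oplus I_{d^\prime}$ lie in $\cS_F$, with $\phi(p) = I_n \oplus 0$ and $\phi(q) = 0 \oplus I_{n^\prime}$. As in the $V_{12}=V_{21}=0$ step of the proof of Proposition 2.22, this matching forces $W$ into block-diagonal form $W = W_+ \oplus W_-$, with isometries $W_+ : \CC^n \to \CC^d \otimes \cK$ and $W_- : \CC^{n^\prime} \to \CC^{d^\prime} \otimes \cK$. Reading the $(2,1)$ corner of $\tilde\phi|_{\cS_F}$ yields the dilation formula
\[
\Psi(a) = W_-^*(a \otimes I_\cK)\,W_+ \qquad (a \in F).
\]

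\textbf{Extracting the identity block (main obstacle).} The technical heart is to use the hypothesis that $\Psi$ is completely \emph{isometric}, not merely contractive, to exhibit a distinguished unit vector $\xi \in \cK$ along which $W_+$ and $W_-$ isometrically embed $\CC^d$ and $\CC^{d^\prime}$ in a manner compatible with the block-diagonal decomposition of $F$. The plan: for each $j$ pick a full-rank norm-one $a_j \in \OS{d_j}{d_j^\prime}$ and set $a = \oplus_j a_j \in F$. Complete isometry on amplifications $a \otimes Y$ (with $Y \in \OS{N}{N}$ arbitrary) forces the double compression $W_-^*((\cdot)\otimes I_\cK)\,W_+$ to be norm-preserving on an entire subspace of $F \otimes \OS{N}{N}$. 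A norm-attaining vector analysis for compressions of tensor products by isometries then pins down a common $\xi$, and the coherent direct-sum structure of $F$ ensures that $W_\pm$ respect the decomposition of $\CC^d$ and $\CC^{d^\prime}$ into summands.

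\textbf{Assembly.} Once $\xi$ is identified, decompose $\CC^n = \CC^d \oplus \CC^{n-d}$ and $\CC^{n^\prime} = \CC^{d^\prime} \oplus \CC^{n^\prime - d^\prime}$ along the aligned subspaces. In these coordinates the dilation formula immediately yields
\[
\Psi(x) = U \begin{pmatrix} x & 0 \\ 0 & \psi(x) \end{pmatrix} V^*,
\]
where $\psi$ is the completely contractive compression onto the orthogonal complements and $U, V$ implement the coordinate change. The isometric hypothesis (as opposed to mere contractivity) is used decisively exactly at the alignment step, which I expect to be by far the most delicate part of the argument.
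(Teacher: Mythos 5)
Your setup is sound: the off-diagonal construction, Arveson extension, and Stinespring dilation do produce isometries $W_\pm$ with $\Psi(a)=W_-^*(a\otimes I_{\mathcal K})W_+$, and the block-splitting of $W$ via the projections $p,q$ is correct. But the proof stops exactly where the content of the theorem begins. The step you label ``Extracting the identity block'' is not an argument; it is a description of what an argument would have to accomplish. Two separate things are missing there. First, even for a \emph{single} summand $F=\OS{d}{d^\prime}$, the claim that complete isometry of $a\mapsto W_-^*(a\otimes I_{\mathcal K})W_+$ forces a common unit vector $\xi\in\mathcal K$ along which $W_\pm$ embed $\CC^d$ and $\CC^{d^\prime}$ is precisely Theorem 1.3 of \cite{HKMS}, which the paper invokes as a black box; your ``norm-attaining vector analysis'' produces, for each rank-one $uv^*$, vectors $\xi_{u,v}$ with $W_+\eta=v\otimes\xi_{u,v}$ and $u\otimes\xi_{u,v}\in\operatorname{ran}W_-$, but you give no reason these can be chosen independently of $(u,v)$ and compatibly across amplifications --- that coherence is the whole difficulty. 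Second, for several summands you assert that ``the coherent direct-sum structure of $F$ ensures that $W_\pm$ respect the decomposition,'' but this is false without proof: a priori the distinguished copies of $\CC^{d_j}$ and $\CC^{d_k}$ inside $\CC^n$ could overlap, in which case $\Psi$ would not contain the full block $x=\oplus_j x_j$.

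For comparison, the paper's proof handles exactly these two points differently: it applies \cite[Theorem 1.3]{HKMS} to each restriction $\Psi_j=\Psi|_{\OS{d_j}{d_j^\prime}}$ to get unitaries $U_j,V_j$ with $U_j^*\Psi_j(x)V_j=x\oplus\psi_j(x)$, and then proves the mutual orthogonality of the subspaces $V_j\mathcal I_j$ (and $U_j\mathcal I_j^\prime$) by testing $\Psi$ on elements of the form $x_1\oplus e^{it}x_2\oplus 0$: since $\|\Psi(x)\|\le 1$ while the $\Psi_1$-component already attains norm $1$ on a suitable vector, the $\Psi_2$-component must annihilate that vector for every $x_2$ and every $t$, which yields the orthogonality relations needed to assemble global unitaries $U,V$. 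If you want to complete your dilation-based route, you would need to supply a genuine proof of both the single-block alignment and an analogue of this orthogonality argument; as written, the proposal has a gap at its acknowledged ``technical heart.''
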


  Two operator systems $\mathcal S, \mathcal S^\prime \ss \OS{m}{m}$
  are equal, up to unitary equivalence, if there exists
  an $m\times m$ unitary matrix $U$ such that
 \begin{equation*}
  \mathcal S =\{ U T U^* : T\in\mathcal S^\prime \}.
 \end{equation*}

\begin{corollary}
\label{cor:structureIS}
     If $\cS \ss \OS{m}{m}$ is an injective operator system, then there exist
    integers $n_j$ and a completely contractive unital mapping
    $\Phi_0: \oplus_j \OS{n_j}{n_j} \to \OS{m-\sum n_j}{m-\sum n_j}$ so that,
    up to unitary equivalence,
  \begin{equation}\label{eq:injOpSysRep}
     \cS=\{ \begin{pmatrix} a & 0 \\ 0 & \Phi_0(a) \end{pmatrix} : a \in \oplus_j \OS{n_j}{n_j}\}.
  \end{equation}

   Further,
   if $\tau:\cS\to \OS{p}{p}$ is completely isometric
   and unital, then the range
   of $\tau$ is also an injective operator system and
   $\tau$ is, up to unitary equivalence, of the form,
  \begin{equation}
   \tau(\begin{pmatrix} a & 0 \\ 0 & \Phi_0(a) \end{pmatrix})
       = \begin{pmatrix} a & 0 \\ 0 & \tau_0(a)\end{pmatrix},
  \end{equation}
    for some completely contractive unital $\tau_0$.
\end{corollary}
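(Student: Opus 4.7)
The plan is to combine the Choi--Effros theorem (the proposition above) with Theorem~\ref{thm:structureCI}. By Choi--Effros, the injective operator system $\cS$ becomes a $C^*$-algebra under $a\circ b = \Phi(ab)$ whose $C^*$-norm agrees with the inherited operator space norm. Being finite dimensional, this $C^*$-algebra is $\ast$-isomorphic to a direct sum $\bigoplus_j \OS{n_j}{n_j}$ for some positive integers $n_j$. Any $\ast$-isomorphism of $C^*$-algebras is unital and completely isometric, so we obtain a completely isometric unital map
\[
\Psi:\bigoplus_j \OS{n_j}{n_j}\longrightarrow \OS{m}{m}
\]
whose image is $\cS$.

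Applying Theorem~\ref{thm:structureCI} to $\Psi$ produces unitaries $U,V\in\CC^{m\times m}$ and a completely contractive
$\psi:\bigoplus_j \OS{n_j}{n_j}\to \OS{m-\sum n_j}{m-\sum n_j}$
with $\Psi(x) = U\,\mathrm{diag}(x,\psi(x))\,V^*$. Setting $x=I$ and using $\Psi(I)=I_m$ gives $U^*V = \mathrm{diag}(I,\psi(I))$; since $U^*V$ is unitary, the block $W:=\psi(I)$ must itself be a unitary matrix. Define $\Phi_0(x):=\psi(x)W^*$: right multiplication by a unitary is a complete isometry, so $\Phi_0$ is completely contractive, and $\Phi_0(I)=WW^*=I$. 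Absorbing $\mathrm{diag}(I,W^*)$ into $V$ converts $\Psi$ into $\Psi(x)=U\,\mathrm{diag}(x,\Phi_0(x))\,U^*$, which yields the description of $\cS$ in \eqref{eq:injOpSysRep} after the unitary change of basis by $U$.

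For the second statement, the composition $\tau\circ\Psi:\bigoplus_j\OS{n_j}{n_j}\to\OS{p}{p}$ is completely isometric and unital, so re-running Theorem~\ref{thm:structureCI} and the unital normalization above delivers (up to a unitary conjugation on $\CC^{p}$) a completely contractive unital $\tau_0:\bigoplus_j \OS{n_j}{n_j}\to\OS{p-\sum n_j}{p-\sum n_j}$ with $(\tau\circ\Psi)(x)=\mathrm{diag}(x,\tau_0(x))$. Composing with $\Psi^{-1}$ and reading off through \eqref{eq:injOpSysRep} gives the asserted form of $\tau$. Injectivity of $\tau(\cS)$ is immediate from Lemma~\ref{lem:nonsense}, since $\cS$ is injective and $\tau$ is a complete isometry onto its range. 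The only nontrivial piece of bookkeeping is the unital normalization in the second paragraph; verifying that $\psi(I)$ is a unitary (rather than merely a contraction) is the key observation that makes everything else routine.
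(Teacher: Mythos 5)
Your proposal is correct and follows essentially the same route as the paper: Choi--Effros plus the structure theorem for finite-dimensional $C^*$-algebras yields a completely isometric unital $\Psi:\oplus_j\OS{n_j}{n_j}\to\cS$, to which Theorem~\ref{thm:structureCI} is applied, and the second statement follows by running the same argument on $\tau\circ\Psi$ together with Lemma~\ref{lem:nonsense}. The only difference is that you spell out the unital normalization (checking that $\psi(I)$ is unitary and absorbing it to get $V=U$ and $\Phi_0(I)=I$), a step the paper compresses into ``since $\Psi$ is unital, we may assume $V=U$.''
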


 \begin{proof}
   Since $\cS$, being injective, is completely isometrically
   isomorphic to a finite dimensional $C^*$-algebra, there exist (finitely many) integers $n_j$
   and a completely isometric unital mapping $\Psi:\oplus_j\OS{n_j}{n_j} \to \cS\ss
   \OS{m}{m}$.
     By Theorem \ref{thm:structureCI}, there are unitaries
   $U$ and $V$ and a completely contractive mapping
   $\psi$ such that
 $$
  \Psi(x) = U \begin{pmatrix} x & 0 \\ 0 & \psi(x) \end{pmatrix} V^*.
 $$
   Since $\Psi$ is unital, we may assume $V=U$. Hence, up to unitary
   equivalence, $\cS$ being the image of $\Psi$, is of the desired form
   \eqref{eq:injOpSysRep}.

   If $\tau$ is completely isometric and $\cS$ is injective, then $\tau(\cS)$
  is injective by Lemma \ref{lem:nonsense}.

   To prove the last part, observe that the mapping
   $\tilde{\tau}:\oplus_j \OS{n_j}{n_j}\to \OS{p}{p}$
   defined by
 \begin{equation*}
  \tilde{\tau}(a) = \tau (\Psi(a))
 \end{equation*}
   is completely isometric. Hence, by the first part of the corollary
   there exists a unitary $W$ and completely contractive
   $\tau_0$ such that
 \begin{equation*}
   \tau(\Psi(a))= W \begin{pmatrix} a & 0\\ 0 & \tau_0(a) \end{pmatrix} W^*.
   \qedhere
 \end{equation*}
 \end{proof}

\begin{proof}[Proof of Theorem {\rm \ref{thm:structureCI}}]
      Let $\Psi_j$ denote the restriction of $\Psi$ to the $j$-th coordinate.
      Thus, $\Psi_j: \OS{d_j}{d_j^\prime} \to \OS{n}{n^\prime}$
      is completely isometric.
      From our earlier results (cf.~\cite[Theorem 1.3]{HKMS}),
      there exist unitaries
      $U_j: \mathbb C^{n^\prime}\to \mathbb C^{n^\prime}$ and
      $V_j:\mathbb C^{n}\to \mathbb C^{n}$  such that
  \begin{equation}
   \label{eq:repPsij1}
    \tau_j(x):=
       U_j^* \Psi_j(x)V_j = \begin{pmatrix} x & 0 \\ 0 & \psi_j(x) \end{pmatrix}
  \end{equation}
    for some completely contractive
    $\psi_j:\OS{d_j}{d_j^\prime} \to \OS{n-d_j}{n^\prime -d_j^\prime}$.
    These $\Psi_j$ need to
    fit together in a way which keeps $\Psi(x)=\sum \Psi_j(x_j)$ completely
    isometric.

   We now decompose each of $\mathbb C^n$ and $\mathbb C^{n^\prime}$
    compatibly with the $\Psi_j$ and $\tau_j$ as follows using the notations
    and orthogonal decomposition of equation \eqref{eq:repPsij1}.
    In particular, $\tau_j(x)$ decomposes as a direct sum mapping
    $\mathbb C^{d_j}\oplus \mathbb C^{n-d_j} \to \mathbb
    C^{d_j^\prime}\oplus \mathbb C^{n^\prime-d_j^\prime}$.  We let
    $\mathcal I_j$ and $\mathcal I_j^\prime$ denote the first
    summands, $\mathbb C^{d_j}$ and $\mathbb C^{d_j^\prime},$
    respectively.   We further decompose the second summands, $\mathbb
    C^{n-d_j}$ and $\mathbb C^{n^\prime-d_j^\prime}$ as follows. 
    Let $\mathcal Z_j^\prime$ denote the subspace
    $\{z\in\mathbb C^{n^\prime -d_j^\prime}: z^* \psi_j(x)=0 \text{ for all } x\}$.
    Let $\mathcal C_j^\prime$ denote the orthogonal complement of $\mathcal Z_j^\prime$
    in $\mathbb C^{n^\prime -d_j^\prime}$.  Similarly, let
    $\mathcal Z_j$ denote $\{z \in \mathbb C^{n-d_j}: \psi_j(x)z=0 \text{ for all } x\}$
    and let $\mathcal C_j$ denote the complement of $\mathcal Z_j$
    in $\CC^{n-d_j}$. 
   Thus, we have $\mathbb C^n= \mathcal I_j \oplus \mathcal C_j \oplus
   \mathcal Z_j$ and likewise for the primes. 

   The claim is:
\ben[\rm (a)]
\item
     $V_j \mathcal I_j \perp V_k \mathcal  C_k$;
\item
     $V_j \mathcal I_j \perp V_k \mathcal I_k$ for  $j\ne k$;
\item
     $U_j \mathcal I_j^\prime \perp U_k \mathcal C_k^\prime$;
\item
$U_j\mathcal I_j^\prime \perp U_k \mathcal I_k'$ for $j\ne k$.
\een

Clearly, (a) and (c) hold for $j=k$. For the rest of the proof
we assume, without loss of generality, that $j=1$ and $k=2$.

(a) $\land$ (b):
  Let $v\in \mathcal I_1$ and $u\in\mathcal I_1^\prime$ be given unit vectors.
    Let  $x_1 = u_1 v^* $ and let $x_2\in\OS{d_2}{d_2^\prime}$ of norm one be given.
     Both $x_1$ and $x_2$ have norm one and hence $x=x_1\oplus \exp(it) x_2 \oplus 0\in\oplus_j\OS{d_j}{d_j'}$
     also has norm one. Thus,
 \begin{equation*}
  \begin{split}
      1\ge \| \Psi(x)V_1v \|& = \| \Psi_1(x_1) V_1v +\exp(it) \Psi_2(x_2)V_1 v\| \\
          &= \| U_1 \begin{pmatrix} x_1 &0\\0 & \psi_1(x_1)\end{pmatrix} \begin{pmatrix} v \\ 0 \end{pmatrix}
        + \exp(it)U_2 \begin{pmatrix} x_2 &0\\0&\psi_2(x_2)\end{pmatrix} V_2^* V_1 v \|\\
        &= \| U_1 \begin{pmatrix} u_1 \\ 0 \end{pmatrix}
        + \exp(it)U_2 \begin{pmatrix} x_2 &0\\0&\psi_2(x_2)\end{pmatrix} V_2^* V_1 v\|.
   \end{split}
 \end{equation*}
  Since
 \begin{equation*}
   1= \| U_1 \begin{pmatrix} u_1 \\ 0 \end{pmatrix} \|,
 \end{equation*}
  it follows that
 \begin{equation*}
    \begin{pmatrix} x_2 &0\\0&\psi_2(x_2)\end{pmatrix} V_2^* V_1 v =0
 \end{equation*}
  for all $x_2$.  
  $V_2^* V_1 v\in \mathcal Z_2.$
  Equivalently,
  $V_2^* V_1 v \in (\mathcal I_2\oplus \mathcal C_2)^\perp$ which is
  equivalent to the claim.

(c) $\land$ (d):
  This argument is similar to that above.
   We claim that
   $U_1 \mathcal I_1'$ is orthogonal to $U_2
   (\mathcal I_2' \oplus \mathcal C_2')$.
   Note, if $0\neq \gamma \in \mathcal I_2' \oplus \mathcal C_2'$, then
   there is an $x_2$ so that
   $$\begin{pmatrix}x_2^*&0\\
   0&\psi_2(x_2)^*\end{pmatrix}  \gamma \ne 0.$$
   Given a unit vector
   $v\in \mathcal I_1'$, let $\gamma$ denote the
   projection of $U_2^* U_1 v$ onto $\mathcal I_2' \oplus \mathcal C_2'$.
   Choose a unit vector $u_1 \in\mathbb C^{d_1}$, let
   $x_1=v u_1^*$, and let $x_2\in \OS{d_2}{d_2^\prime}$ of norm one be given.
   Let $x=x_1\oplus \exp(it) x_2 \oplus 0$ and estimate,
 \begin{equation*}
  \begin{split}
     1\ge \|\Psi(x)^* U_1 v\| &=
     \| \Psi_1(x_1)^* v +\exp(it) \Psi_2(x_2)^* U_1 v\| \\
     &=
     \| V_1 \begin{pmatrix} x_1^* &0\\0&\psi_1(x_1)^*\end{pmatrix}
            \begin{pmatrix} v \\ 0 \end{pmatrix}
              + \exp(it)
V_2 \begin{pmatrix} x_2^* &0\\0&\psi_2(x_2)^*\end{pmatrix}
              U_2^*U_1 v \| \\
           &= \| V_1             \begin{pmatrix} u_1 \\ 0 \end{pmatrix}
              + \exp(it)V_2 \begin{pmatrix} x_2^* &0\\0&\psi_2(x_2)^*\end{pmatrix}
              \gamma \|.
  \end{split}
 \end{equation*}
    It follows that $\Psi_2(x_2)^* \gamma=0$ over all choices of $x_2$.
    Hence $\gamma \in \cZ_2'$; but also
    $\gamma \in \mathcal I_2' \oplus \mathcal C_2'$.
     Thus, $\gamma=0$. Since this is true for all $v$, the conclusion follows
    and the claim is proved.

     Let $\mathcal M = \oplus V_j \mathcal I_j$.
    Note that $\cM$ has dimension $d=\sum d_j$ and
    $\cM^\perp$ has dimension $n-d$. 
   Before proceeding, we make the following observations.  If
   $j\ne k$ and $\gamma_j\in \mathcal I_j$, then $V_k^* V_j
   \gamma_j=0$.  Further, since $V_k\mathcal I_k \subset \mathcal M,$
   if $\Gamma \in \mathcal M^\perp$, then there is a
   $\delta_k\in\mathcal I_k^\perp$ such that $\gamma=V_k
   \delta_k$. Thus, for $x_k\in \OS{d_k}{d_k^\prime},$
 \[
   U_k \tau_k(x_k) V_k^* \Gamma = U_k \psi_k(x_k) \delta_k \in (\mathcal I_k^\prime)^\perp.
 \]
   On the other hand, $\psi_k(x_k)\delta_k$ is also orthogonal to
   $(\mathcal Z_k^\prime)^\perp$ (by definition).  We conclude that
   $U_k \tau_k(x_k)V_k^* \Gamma$ is in $U_k \mathcal C_k^\prime$ and thus
\begin{equation}
  \label{eq:ortho}
    U_k \tau_k(x_k) V_k^* \Gamma \in (\mathcal M^\prime)^\perp.
\end{equation}

   Define
    $V^* : \CC^n \to \CC^n$ by
 \begin{equation*}
  \begin{split}
   V^* : \cM \oplus \cM^\perp & \to
        \oplus \CC^{d_j} \oplus \CC^{n-d}, \\
 \begin{pmatrix} \oplus V_j\gamma_j \\ V_0 \gamma\end{pmatrix} & \mapsto
  \begin{pmatrix} \oplus \gamma_j \\ V \gamma \end{pmatrix},
  \end{split}
 \end{equation*}
   where $V_0^* : \cM^\perp \to \CC^{n-d}$ is (any) unitary.

   Similarly, define $U: \CC^{n^\prime}\to \CC^{n^\prime}$ by
 \begin{equation*}
  \begin{split}
 U:\oplus \CC^{d_j^\prime} \oplus \CC^{n^\prime-d^\prime} &
    \to \cM^\prime \oplus (\cM^\prime)^\perp, \\
  \begin{pmatrix} \oplus \delta_j \\ \delta\end{pmatrix} & \mapsto
   \begin{pmatrix}  \oplus U_j \delta_j \\ U_0 \delta \end{pmatrix},
  \end{split}
 \end{equation*}
   where $U_0$ is (any) unitary from $\CC^{n^\prime-d^\prime}$ to
  $(\cM^\prime)^\perp$ and $\cM'=\oplus U_j\cI_j'$.

  We record the following observation, which follows from \eqref{eq:ortho}.
  Given $x_k \in \OS{d_k}{d_k^\prime}$,
$\oplus \gamma_j \in \oplus \CC^{d_j}$ and $\gamma\in \CC^{n-d}$,
 \begin{equation*}
   U_k \tau_k(x_k) V_k^*
           \begin{pmatrix}  \oplus V_j \gamma _j \\ V_0 \gamma\end{pmatrix}
    = U_k \tau_k(x_k) V_k^*(\sum_j V_j \gamma_j +V_0 \gamma)
    = U_k x_k \gamma_k \oplus \psi_k(x_k)V_k^* V_0 \gamma,
 \end{equation*}
   where the orthogonality respects the decomposition $\mathcal
   M^\prime \oplus (\mathcal M^\prime)^\perp$. 

   To finish the proof, let $\oplus x_j \in \oplus \OS{d_j}{d_j^\prime}$
  and $\oplus \gamma_j \in \oplus \CC^{d_j}$ and $\gamma\in \CC^{n-d}$
  be given. Then,
 \begin{equation*}
  \begin{split}
     U^* \Psi(\oplus x_k)V  V^*
        \begin{pmatrix} \oplus V_j \gamma_j \\ V_0 \gamma \end{pmatrix}
        &=  U^* \sum_k \Psi_k(x_k)
          \begin{pmatrix} \oplus V_j \gamma_j  \\ V_0 \gamma\end{pmatrix}
       = U^* \sum_{k} U_k \tau_k(x_k)V_k^*(\sum_j V_j\gamma_j
           +V_0\gamma) \\
     &= U^* \sum U_k (x_k \gamma_k +\psi_k(x_k) V_k^*V_0\gamma)
     =  \begin{pmatrix} \oplus  x_k \gamma_k \\
         U_0^* \sum U_k \psi_{k}(x_k) V_k^* V_0 \gamma \end{pmatrix}\\
         &= \begin{pmatrix} x & 0 \\ 0 & \psi(x) \end{pmatrix}
          \begin{pmatrix} \oplus \gamma_j\\ \gamma\end{pmatrix} 
                      = \begin{pmatrix} x & 0 \\ 0 & \psi(x) \end{pmatrix}
          V^*  \begin{pmatrix} \oplus V_j \gamma_j\\ V_0 \gamma\end{pmatrix}
               \end{split}  
 \end{equation*}
for the completely contractive $\psi(x)= U_0^* \sum U_k \psi_{k}(x_k) V_k^* V_0$.
\end{proof}

\section{The Injective Envelope of $\OB{L}$}
 \label{sec:envelope-RL}

  The following theorem exposes the structure of
  a concrete injective envelope of $\OB{L}$, for
  a \nonsingular homogeneous linear pencil $L$.
  It will be applied, along with Theorem
  \ref{thm:structureCI}, to prove
  Theorem \ref{thm:linear-main} and
  then Theorem \ref{thm:main} in Section
  \ref{sec:cimaps} below.

\begin{theorem}
  \label{thm:structureI}
   Let $L:\CC^{g}\to\Mdd$ be a \nonsingular homogeneous linear pencil. Then
   there is a concrete injective envelope $E\ss\OS{d}{d'}$ of $\OB{L}$
   and unitaries $V,W$ such that
  \begin{equation*}
   E=    \{ W^* \begin{pmatrix} x & 0 \\ 0 & \phi(x)\end{pmatrix}V  :
         x \in \oplus_1^N  \OS{d_j}{d_j^\prime} \}
  \end{equation*}
   for some choice of integers $(d_j,d_j^\prime)$ and a
   completely contractive mapping $\phi:\oplus_1^N \OS{d_j}{d_j^\prime} \to \OS{s}{s^\prime}$.
   $($Here $s,s'\in\NN_0$ are such that
     $s+\sum_1^N  d_j =d$ and $s'+\sum_1^N d_j^\prime = d^\prime.)$
 \end{theorem}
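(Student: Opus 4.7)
The plan is to deduce the structure of the injective operator space $E$ from the structure theory of injective operator \emph{systems}, via the doubling $E \leadsto S_E$. Specifically, Proposition \ref{prop:from-system-to-space} locates $E$ inside the concrete injective envelope of $S_E$, while Corollary \ref{cor:structureIS} decomposes that envelope as a $C^*$-algebra.

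First fix $E$, a concrete injective envelope of $\OB{L}$ inside $\OS{d}{d'}$, which exists by the existence theorem of Section 2.2. Since $E$ is injective, Proposition \ref{prop:from-system-to-space} realizes the concrete injective envelope $\mathcal{E}$ of $S_E$ as
$$\mathcal{E} = \Big\{\begin{pmatrix} a & f^* \\ e & b \end{pmatrix} : a \in \mathcal{A},\; b \in \mathcal{B},\; e,f \in E\Big\} \subseteq \OS{d+d'}{d+d'}$$
for some operator spaces $\mathcal{A} \subseteq \OS{d}{d}$ and $\mathcal{B} \subseteq \OS{d'}{d'}$. In particular the projections $p_1 = I_d \oplus 0$ and $p_2 = 0 \oplus I_{d'}$ lie in $S_E \subseteq \mathcal{E}$, and $E$ is explicitly the lower-left corner $p_2 \mathcal{E} p_1$.

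Next, apply Corollary \ref{cor:structureIS} to the injective operator system $\mathcal{E}$: up to unitary equivalence, $\mathcal{E}$ is completely isometrically $*$-isomorphic to $\oplus_{j=1}^N \OS{n_j}{n_j}$ as a $C^*$-algebra, with a completely contractive tail $\Phi_0$. Under the isomorphism the projections $p_1, p_2$ correspond to $\oplus_j p_1^{(j)},\; \oplus_j p_2^{(j)}$ satisfying $p_1^{(j)} + p_2^{(j)} = I_{n_j}$. Writing $d_j = \mathrm{rank}(p_1^{(j)})$ and $d_j' = \mathrm{rank}(p_2^{(j)})$, the corner $p_2 \mathcal{E} p_1$ then decomposes as $\oplus_j \, p_2^{(j)} \OS{n_j}{n_j} p_1^{(j)} \cong \oplus_j \OS{d_j}{d_j'}$ in the algebra part, with a compatible tail piece arising from $\Phi_0$ applied to this corner. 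Choosing orthonormal bases of $\CC^d = \mathrm{range}(p_1)$ and $\CC^{d'} = \mathrm{range}(p_2)$ that simultaneously diagonalize the families $\{p_1^{(j)}\}_j$ and $\{p_2^{(j)}\}_j$ and split off the tail subspaces of respective dimensions $s = d - \sum d_j$ and $s' = d' - \sum d_j'$, one reads off unitaries $V \in \CC^{d \times d}$ and $W \in \CC^{d' \times d'}$ delivering the asserted normal form, with $\phi : \oplus_j \OS{d_j}{d_j'} \to \OS{s}{s'}$ the completely contractive piece extracted from $\Phi_0$.

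The main obstacle is reconciling the two descriptions of $\mathcal{E}$: Proposition \ref{prop:from-system-to-space} respects the splitting $\CC^{d+d'} = \CC^d \oplus \CC^{d'}$, whereas the unitary implementing the standard form of Corollary \ref{cor:structureIS} a priori mixes the two halves and need not provide \emph{separate} unitaries on $\CC^d$ and $\CC^{d'}$ as required by the statement. The key point to verify is that the minimal central projections of the $C^*$-algebra $\mathcal{E}$ commute with $p_1$ and $p_2$ as elements of $M_{d+d'}$, so that the decompositions $p_i = \sum_j p_i^{(j)}$ take place inside $\mathrm{range}(p_i)$ and the basis choices can be made independently on $\CC^d$ and on $\CC^{d'}$; this compatibility is what ultimately forces the block-diagonal form claimed for $E$.
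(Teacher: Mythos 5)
Your strategy coincides with the paper's: combine Proposition \ref{prop:from-system-to-space} (which realizes $E$ as the lower-left corner $p_2\cE p_1$ of the concrete injective envelope $\cE$ of $S_E$) with Corollary \ref{cor:structureIS} (which puts $\cE$ in the standard form $\{x\oplus\Phi_0(x)\}$ up to a unitary $U$ of $\CC^{d+d'}$), and then reconcile the two coordinate systems. The difficulty is that this reconciliation is where essentially all of the content lies, and you defer it (``the key point to verify is\ldots'') rather than carry it out. The first thing that must actually be proved is that $U$ restricts to separate unitaries $V$ on $\CC^{d}=\operatorname{range}(p_1)$ and $W$ on $\CC^{d'}=\operatorname{range}(p_2)$. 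This does follow, because $Up_iU^*$ are complementary projections of the block form $q_i\oplus\Phi_0(q_i)$ with $q_1+q_2=I$, and because $q_i$, being a projection in the direct sum $\oplus_j\OS{n_j}{n_j}$, automatically splits as $\oplus_j q_i^{(j)}$; note that this, and not your stated criterion that the minimal central projections of $\cE$ commute with $p_1,p_2$ inside $\CC^{(d+d')\times(d+d')}$, is the relevant fact (the $\Phi_0$-halves need not commute with anything). The paper establishes the same point by writing $U=(u_{ij})$ in block form and deriving the orthogonality relations $u_{11}u_{21}^*=0$, $u_{12}u_{22}^*=0$, and the fact that the $u_{ij}$ are partial isometries, from which $\cL=\operatorname{range}(u_{11})$ decomposes along the summands $\CC^{n_j}$.

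The second, and in your write-up entirely unaddressed, issue is well-definedness of $\phi$. After conjugation, $E$ becomes $\{\,q_2xq_1\oplus\Phi_0(q_2)\Phi_0(x)\Phi_0(q_1) : x\in\oplus_j\OS{n_j}{n_j}\}$, and to recast this as $\{z\oplus\phi(z): z\in\oplus_j\OS{d_j}{d_j'}\}$ you must show the tail depends only on $z=q_2xq_1$, i.e., that $q_2xq_1=0$ forces $\Phi_0(q_2)\Phi_0(x)\Phi_0(q_1)=0$. This is not formal: in your framework it would follow from a multiplicative-domain argument giving $\Phi_0(q_2)\Phi_0(x)\Phi_0(q_1)=\Phi_0(q_2xq_1)$ (using that $\Phi_0$ is unital completely positive and that the $\Phi_0(q_i)$ are projections), while in the paper it is the substance of the two inclusions $E\subseteq F$ and $F\subseteq E$ proved at the end of Section 4, where $\phi$ is defined directly on the corner and shown to agree with $\Phi$ there. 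As it stands the proposal is a correct plan with the right ingredients, but the two verifications above constitute the proof and are missing.
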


 \begin{proof}
  Let $E$ denote an injective envelope of $\mathcal R_L$ and let
  $\mathcal E$ denote an injective envelope of the operator system
  $S_E$.  
   From Proposition \ref{prop:from-system-to-space},
  \begin{equation*}
     \cE = \{ \begin{pmatrix} \zeta  & b^* \\ a & \xi \end{pmatrix} :
      \zeta \in \mathcal A, \xi \in \mathcal B,   a,b \in E\},
  \end{equation*}
   for some unital subspaces $\mathcal A\subseteq\CC^{d\times d}$ and
   $\mathcal B\subseteq \CC^{d^\prime \times d^\prime}$.

   On the other hand, from Corollary \ref{cor:structureIS}, the injective operator system
   $\mathcal E$ has, up to unitary equivalence, the form,
  \begin{equation*}
     \{ \begin{pmatrix} x & 0 \\ 0 & \Phi(x) \end{pmatrix}:
          x\in\oplus \OS{n_j}{n_j} \},
  \end{equation*}
   where $\Phi:\oplus \OS{n_j}{n_j}\to \OS{m}{m}$ is a unital completely contractive
   mapping.
   With respect to $\OS{n_j}{n_j}$
      acting on $\oplus \CC^{n_j}$,
         let $\cQ_j$ denote the $j$-th coordinate $\CC^{n_j}$.

   Thus there is a unitary (block) matrix $U=\begin{pmatrix} u_{ij}\end{pmatrix}$
   such that
  \begin{equation}
   \label{eq:I0}
       \begin{pmatrix} u_{11} & u_{12} \\ u_{21} & u_{22} \end{pmatrix}
       \begin{pmatrix} \zeta & b^* \\ a & \xi \end{pmatrix}
        \begin{pmatrix} u_{11}^* & u_{21}^* \\ u_{12}^* & u_{22}^* \end{pmatrix}
       = \begin{pmatrix} x & 0 \\ 0 & \Phi(x) \end{pmatrix}.
  \end{equation}
    In particular,
  \begin{equation}
   \label{eq:I1}
   \begin{split}
    u_{11}\zeta u_{11}^* +  u_{12}\xi u_{12}^* + u_{12}au_{11}^* + u_{11}b^* u_{12}^* &=
         x \in\oplus \OS{n_j}{n_j}\\
    u_{11} \zeta u_{21}^* +u_{12} \xi u_{22}^* + u_{12}au_{21}^* + u_{11}b^* u_{22}^* &=0 \\
      u_{21} \zeta u_{21}^*  +u_{22}\xi u_{22}^* + u_{22}au_{21}^* + u_{21}b^* u_{22}^* &=\Phi(x).
   \end{split}
  \end{equation}

   Choosing $\zeta = 1$, $\xi=0$ and $a=b=0$ in equation \eqref{eq:I1} gives,
 \begin{equation}
  \label{eq:I2}
     u_{11}u_{21}^* =   0\quad \text{and}\quad      u_{21}u_{11}^* = 0.
 \end{equation}
   Further since in this case the right-hand side of equation \eqref{eq:I0}
   is a projection, it also follows that $u_{11}u_{11}^*$ and $u_{21}u_{21}^*$
   are both projections. Equivalently, $u_{11}$ and $u_{21}$ are partial
   isometries.

  Choosing $\xi=1$, $\zeta=0$ and $a=b=0$ in equation \eqref{eq:I1} gives,
 \begin{equation}
  \label{eq:I2'}
     u_{12}u_{22}^* =   0\quad \text{and}\quad
     u_{22}u_{12}^* =  0.
 \end{equation}
   Moreover, $u_{12}u_{12}^*$ and $u_{22}u_{22}^*$ are projections
   and $u_{12}$ and $u_{22}$ are partial isometries.

   Next, choosing $\zeta=1$, $\xi=1$ and $a=b=0$
   (or using that $U$ is unitary)
   it follows that
 \begin{equation}
  \label{eq:I3}
      u_{11}u_{11}^* + u_{12}u_{12}^* = I \quad\text{and}\quad
      u_{21}u_{21}^* + u_{22}u_{22}^* = I.
 \end{equation}
   Using the fact that all the entries of $U$
   are partial isometries, it now follows that
   $u_{11}u_{11}^*$ and $u_{12}u_{12}^*$ are orthogonal
   projections and $u_{11}^* u_{12}=0$.

   Let
   $\mathcal L \subset \oplus \mathbb C^{n_j}$
      denote the range of $u_{11}$
   so that, by the above relations,
  the  range of $u_{12}$ is $\mathcal L^\perp =(\oplus \mathbb C^{n_j})\ominus \mathcal L.$
   Similarly,
  let $\mathcal K\subset \mathbb C^m$ denote the range of $u_{21}$ so
  that, in view of the above relations, the range
  of $u_{22}$ is $\mathcal K^\perp =\mathbb C^m \ominus \mathcal K$.

 With these notations,
  we have
 \begin{equation*}
  \begin{split}
    W&=  \begin{pmatrix} P_{\cL^\perp} u_{12} \\ P_{\cK^\perp}
      u_{22} \end{pmatrix} :
     \mathbb C^{d^\prime} \to \mathcal L^\perp \oplus \mathcal K^\perp \\
    V&= \begin{pmatrix} P_{\cL} u_{11} \\ P_{\cK} u_{21} \end{pmatrix}
        :\CC^d\to  \mathcal L\oplus \mathcal K
  \end{split}
 \end{equation*}
    are unitaries as is verified by computing $V^*V$
   and $W^*W$
   and noting that each is the identity (on the appropriate space).

  We now turn to proving that $W$ and $V$ satisfy the conclusion of
  the theorem.
  For future reference, observe,
 \begin{equation*}
    WaV^* = \begin{pmatrix} P_{\cL^\perp}u_{12}au_{11}^* P_{\mathcal L} &
            P_{\cL^\perp} u_{12}au_{21}^*P_{\mathcal K} \\
             P_{\cK^\perp} u_{22}au_{11}^* P_{\cL}
               & P_{\cK^\perp} u_{22}au_{21}^* P_{\cK} \end{pmatrix}.
 \end{equation*}
  In view of the second equality in equation \eqref{eq:I1} 
  (choose $\zeta,\xi$ and $b$ equal $0$ to 
  deduce the $(1,2)$ term is $0$ and let
  $\zeta,\xi,a$ be $0$ to deduce the $(2,1)$ term is $0$)
  the off-diagonal
  terms above are $0$.
  From the first and third equalities in equation \eqref{eq:I1},
  there is an $x\in\oplus \OS{n_j}{n_j}$ such that
  $u_{12}au_{11}^* =x$ and $u_{22}au_{21}^*=\Phi(x)$ (again choose
   $\zeta,\xi$ and $b$ equal $0$).  
  Thus, for each $a\in E$ there is an $x\in\oplus \OS{n_j}{n_j}$
  such that
 \begin{equation}
  \label{eq:I4}
     WaV^*  =  \begin{pmatrix} P_{\cL^\perp} x  P_{\mathcal L} &
            0  \\ 0
               & P_{\cK^\perp} \Phi(x) P_{\cK} \end{pmatrix}.
 \end{equation}
  Thus $WaV^*$ has a certain amount of block diagonal structure.

   We now turn to  proving  that the upper left-hand corner of $WaV^*$ has
   the additional block diagonal structure   claimed in the
   theorem; i.e.,  that $ P_{\cL^\perp} x  P_{\mathcal L}\in
   \oplus \OS{d_j}{d_j^\prime}$
   for some choice of $d_j$ and $d_j^\prime$.
   Observe that $P_{\cL}=u_{11}u_{11}^*$, the projection onto $\mathcal L$
   is contained in $\oplus \OS{n_j}{n_j}$
   (choose $\zeta =1$ and $a,b,\xi$ equal $0$ in \eqref{eq:I1}).
   Hence, with respect to this decomposition,
  \begin{equation*}
    P_{\cL}= \begin{pmatrix} P_1 & 0 & \dots & 0 \\ 0 & P_2 & \dots & 0 \\ \vdots & \vdots & \dots & \vdots\\
             0&0& \dots & P_N \end{pmatrix}.
  \end{equation*}
   It follows that each $P_j$ is a projection which commutes
   with the projection $Q_j$
   onto $\mathcal Q_j$ equal to the $\mathbb C^{n_j}$
   summand of $\oplus \mathbb C^{n_k}$. Letting $\mathcal L_j$ denote the range of
   $P_j$ it follows
   that $$\mathcal L=\oplus \mathcal L_j.$$
   Similarly, $P_{\cL^\perp}=u_{12}u_{12}^*$
   is in $\oplus \OS{n_j}{n_j}$ and thus commutes with each $Q_j$. Consequently,
   $$\mathcal L^\perp =  \oplus \mathcal L_j^\perp,$$
   where $\mathcal L_j^\perp$ is the orthogonal complement of $\cL_j$ in
   $\mathbb C^{n_j}=\mathcal Q_j$.

   Let $d_j$ and $d_j^\prime$ denote the dimensions of $\mathcal L_j$ and
   $\mathcal L^\perp_j$ respectively (thus $d_j+d_j^\prime = n_j)$.
   Identifying $\mathcal L_j$ with $\mathbb C^{d_j}$ and
   $\mathcal L_j^\perp$ with $\mathbb C^{d_j^\prime}$,
   it follows, for $x\in\oplus \OS{n_j}{n_j}$, that
 \begin{equation}
  \label{eq:I5}
     P_{\cL^\perp} x P_{\cL} \in \oplus \OS{d_j}{d_j^\prime}.
 \end{equation}

As explained above \eqref{eq:I4},
for every $a\in E$
we have
\beq\label{eq:I4b}
a = W^*
\begin{pmatrix}
 P_{\cL^\perp} x  P_{\mathcal L} &
            0  \\ 0
               & P_{\cK^\perp} \Phi(x) P_{\cK} \end{pmatrix} V
=
W^*
\begin{pmatrix}
x & 0\\
0&  P_{\cK^\perp} \Phi(x) P_{\cK} \end{pmatrix} V
\in W^* (\oplus \OS{d_j}{d_j^\prime} \oplus \OS{s}{s'})V.
\eeq
 Here $x$ denotes   $u_{12}au_{11}^*$
and  $s=\dim\cK$ , $s'=\dim\cK'$.
The second equality uses
that $\cL$ is the range of $u_{11}$ and
$\cL^\perp$ is the range of $u_{12}$.
In this way, since $\oplus\OS{n_j}{n_j}$ is the set of
all linear maps $\cL\oplus\cL^\perp\to\cL\oplus\cL^\perp$,
we identify $x\in\oplus \OS{d_j}{d_j^\prime}$ with
$\begin{pmatrix} 0&0\\ x&0\end{pmatrix}:
\cL\oplus\cL^\perp\to\cL\oplus\cL^\perp$.
Defining
$$
\phi:\oplus \OS{d_j}{d_j^\prime}\to\OS{s}{s'}, \quad
x\mapsto  P_{\cK^\perp} \Phi(x) P_{\cK},
$$
we thus obtain $E \subset F$ with $F$ defined by
\beq\label{eq:Fdef}
F= \{ W^*\begin{pmatrix} x&0\\
0&\phi(x)\end{pmatrix}V: x\in\oplus \OS{d_j}{d_j^\prime}\} .
\eeq
Note $\phi$ is completely contractive.

To prove
the reverse inclusion, $E \supseteq F$, pick any $f$ in $F$.
This $f$ corresponds to  an  $x\in\oplus \OS{d_j}{d_j^\prime}$
in the definition \eqref{eq:Fdef} of $F$.
By equation \eqref{eq:I0},
   there exist $\zeta,\xi,a,b$ such that
\[
  U \begin{pmatrix} \zeta & b^* \\ a &\xi \end{pmatrix} U^* 
    =\begin{pmatrix} x & 0 \\ 0 & \Phi(x) \end{pmatrix}.
\]
 Hence,
\[
  \begin{pmatrix} 0&0\\ a& 0 \end{pmatrix} =
  \begin{pmatrix} 0& 0 \\ u_{12}^* & u_{22}^* \end{pmatrix}
  \begin{pmatrix} x & 0 \\ 0 & \Phi(x) \end{pmatrix}
  \begin{pmatrix} u_{11} & 0 \\ u_{21} & 0 \end{pmatrix} 
  = \begin{pmatrix} 0 & 0 \\ u_{12}^* x u_{11} + u_{22}^* \Phi(x) u_{21}&0
        \end{pmatrix}.
\]
  We conclude 
\[
  a= u_{12}^* x u_{11} + u_{22}^* \Phi(x) u_{21}.
\]
 Multiplying this identity on the left by $u_{12}$ and the right
  by $u_{11}^*$ and using various orthogonality relations gives
\[
  u_{12}au_{11}^* = P_{\cL^\perp} x P_{\cL}=x.
\]
   There is a $y$ such that
\[
 \begin{pmatrix} y & 0\\ 0 & \Phi(y)\end{pmatrix}
   =    U \begin{pmatrix} 0 & 0 \\ a & 0 \end{pmatrix} U^* \\
   = WaV^* \\
   = \begin{pmatrix} u_{12}au_{11}^* & \star \\ \star & u_{22}a u_{21}^* \end{pmatrix}.
\]
  It follows that $y=x$ and thus 
   $\Phi(x)=u_{22}au_{21}^* =P_{\cK^\perp}\Phi(x)P_{\cK} =\phi(x)$.
  In conclusion,
\[
   f = W^* \begin{pmatrix} x & 0 \\ 0 & \phi(x)\end{pmatrix} V =a\in E,
\]
  for an $x\in \oplus \OS{d_j}{d_j^\prime}$.
\end{proof}

We point out that the $L$ in Theorem \ref{thm:structureI} plays 
no role beyond defining a subspace $Z$ of
rectangular
matrices. The theorem is a characterization of injective envelopes of 
a space $Z$ of rectangular matrices, or equivalently of complete isometries from a direct sum of rectangular matrices into $Z$.

\section{The Main Results}
 \label{sec:cimaps}

  In this section we prove our main result on completely isometric
  maps  $\OB{L}\to \OS{\ell}{\ell^\prime}$, Theorem \ref{thm:linear-main},
  and then Theorem \ref{thm:main}.

\subsection{Completely isometric maps $\OB{L} \to \OS{\ell}{\ell^\prime}$}

\begin{remark}\rm
Given a completely isometric
$\Phi:\OB{L}\to \OS{\ell}{\ell^\prime}$,
   it is tempting to guess that, up
   to unitaries,
  \begin{equation*}
   \Phi(L(x))= \begin{pmatrix} L(x) & 0 \\ 0 & \phi(L(x)) \end{pmatrix},
  \end{equation*}
   for some completely contractive $\phi$.
   However, if $L(x)= x\oplus \frac12 x$, then the mapping
   $\Phi(L(x))=x$ is completely isometric, but not of
   the form above.  This prompts us to decompose
  $L=\tL\oplus J$ in the theorem below.
 \end{remark}

 We now rephrase Theorem \ref{thm:linear-main}
 (together with Lemma \ref{lem:minDim}).
 Then we set about to prove it. Theorem \ref{thm:linear-main} will follow
 as soon as the equivalence between ``minimal pencil'' and ``minimal dimensional
 defining pencil for a ball'' is established in Corollary \ref{cor:minDim}.

\begin{theorem}\label{thm:bigDeal}
     Given a \nonsingular homogeneous linear pencil $L:\CC^{ g}\to\Mdd$, there are 
     homogeneous linear pencils
     $\tL$ and $J$ and unitaries $Q$ and $G$ such that
 \begin{enumerate}[\rm (1)]
  \item $L= Q(\tL \oplus J)G^*;$
  \item $\|L(X)\|=\|\tL(X)\|$ for all $n\in\NN$ and $X\in (\CC^{n\times n})^{g}$, i.e., $\tL$ is equivalent to $L$;
\item
there are $d_j,d_j'\in\NN$ such that
$\tL:\CC^\tg\to\oplus_{1}^N \CC^{d_j'\times d_j}$
and
the injective envelope of $\OB{\tL}$ is $\oplus_{1}^N \OS{d_j}{d_j^\prime}$.
 \end{enumerate}
If  $\Phi:\OB{L}\to \OS{\ell}{\ell^\prime}$
     is completely isometric, then
   \begin{equation*}
   \Phi(L(x))= U \begin{pmatrix} \tL(x) & 0 \\ 0 & \phi(\tL(x)) \end{pmatrix} V^*,
  \end{equation*}
    for some completely contractive
    $\phi:\oplus\OS{d_j}{d_j^\prime}\to\OS{\ell-\sum d_j}{\ell'-\sum g'_j} $ and unitaries $U,V$.
\end{theorem}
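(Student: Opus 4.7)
The plan is to combine Theorem~\ref{thm:structureI} with Theorem~\ref{thm:structureCI}: the former produces a concrete injective envelope of $\OB{L}$ with block-diagonal structure, from which the pencil $\tL$ can be read off; the latter classifies complete isometries out of that envelope.

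Applying Theorem~\ref{thm:structureI} to $L$ yields unitaries $W_0, V_0$, integers $(d_j,d_j^\prime)_{j=1}^N$, nonnegative integers $s,s^\prime$ with $s+\sum d_j=d$ and $s^\prime+\sum d_j^\prime=d^\prime$, and a completely contractive $\phi_0\colon\oplus_j\OS{d_j}{d_j^\prime}\to\OS{s}{s^\prime}$ such that the concrete injective envelope $E$ of $\OB{L}$ in $\OS{d}{d^\prime}$ is the set of all $W_0^* \begin{pmatrix} y & 0 \\ 0 & \phi_0(y)\end{pmatrix} V_0$ with $y\in\oplus_j\OS{d_j}{d_j^\prime}$. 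Because $\OB{L}\subset E$ and $W_0,V_0$ are invertible, for each $x\in\CC^{g}$ there is a unique $\tL(x)\in\oplus_j\OS{d_j}{d_j^\prime}$ with
\[
L(x) = W_0^*\begin{pmatrix} \tL(x) & 0 \\ 0 & \phi_0(\tL(x))\end{pmatrix}V_0,
\]
and uniqueness together with linearity of $L$ forces $x\mapsto\tL(x)$ to be linear, so $\tL$ is a homogeneous linear pencil.

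Setting $Q=W_0^*$, $G=V_0^*$, and $J(x):=\phi_0(\tL(x))$ (again a linear pencil by composition), the displayed identity becomes $L=Q(\tL\oplus J)G^*$, which is (1). Because $Q,G$ are unitary and $\phi_0$ is completely contractive, $\|L(X)\|=\|\tL(X)\oplus J(X)\|=\max\{\|\tL(X)\|,\|\phi_0(\tL(X))\|\}=\|\tL(X)\|$, giving (2); nondegeneracy of $\tL$ then follows from Lemma~\ref{lem:nonsingular} since $\tL(X)=0$ implies $L(X)=0$ and hence $X=0$. For (3), the map $\iota\colon E\to\oplus_j\OS{d_j}{d_j^\prime}$ that strips off $W_0^*$ and $V_0$ is a surjective complete isometry sending $\OB{L}$ onto $\OB{\tL}$, so transporting the injective-envelope property of $E$ through $\iota$ identifies $\oplus_j\OS{d_j}{d_j^\prime}$ as the concrete injective envelope of $\OB{\tL}$.

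For the statement about $\Phi$, given a completely isometric $\Phi\colon\OB{L}\to\OS{\ell}{\ell^\prime}$, extend it via Corollary~\ref{cor:essential} to a completely isometric $\tilde\Phi\colon E\to\OS{\ell}{\ell^\prime}$. Then $\Psi:=\tilde\Phi\circ\iota^{-1}\colon\oplus_j\OS{d_j}{d_j^\prime}\to\OS{\ell}{\ell^\prime}$ is completely isometric, and Theorem~\ref{thm:structureCI} supplies unitaries $U,V$ and a completely contractive $\phi\colon\oplus_j\OS{d_j}{d_j^\prime}\to\OS{\ell-\sum d_j}{\ell^\prime-\sum d_j^\prime}$ with $\Psi(y)=U\begin{pmatrix} y & 0\\ 0 & \phi(y)\end{pmatrix}V^*$. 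Evaluating at $y=\tL(x)$ yields the asserted representation of $\Phi(L(x))$. The main point requiring technical care is verifying that $\iota$ transports the concrete (not merely abstract) injective envelope, so that Theorem~\ref{thm:structureCI} applies to $\Psi$ in the precise form needed; once that is in hand, the remainder is bookkeeping and assembly of the machinery already developed.
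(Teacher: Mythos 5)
Your proposal is correct and follows essentially the same route as the paper: construct $\tL$ and $J$ by reading off the block-diagonal form of the concrete injective envelope from Theorem~\ref{thm:structureI}, identify $\oplus_j\OS{d_j}{d_j^\prime}$ as the injective envelope of $\OB{\tL}$, extend $\Phi$ through the envelope via Corollary~\ref{cor:essential}, and apply Theorem~\ref{thm:structureCI} to the resulting complete isometry. The point you flag about transporting the concrete injective envelope through $\iota$ is handled in the paper exactly as you suggest (via Lemma~\ref{lem:nonsense} and the uniqueness of the envelope), so no gap remains.
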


  \begin{proof}
     Let $E\ss\OS{d}{d'}$ denote a concrete injective envelope of $\OB{L}$. By
     Theorem \ref{thm:structureI} there exist unitaries $Q,G$ such that
 $$
  E=\{Q\begin{pmatrix} a & 0 \\ 0 & \psi(a) \end{pmatrix}G^* : a\in \oplus_1^N \OS{d_j}{d_j^\prime} \}
 $$
 for some choice of integers $(d_j,d_j^\prime)$ and a
    completely contractive mapping $\psi:\oplus_1^N \OS{d_j}{d_j^\prime} \to \OS{s}{s^\prime}$, where $s,s'\in\NN_0$ with
    $s+\sum_1^N  d_j =d$ and $s'+\sum_1^N d_j^\prime = d^\prime.$

   The mapping $\OB L\to \oplus \OS{d_j}{d_j^\prime}$
defined on $L(x)\in\OB L$ by
\begin{equation*}
    L(x)=Q \begin{pmatrix} a & 0 \\ 0 & \psi(a) \end{pmatrix}G^* \mapsto a
 \end{equation*}
   is linear and completely isometric,
   and so $x\mapsto L(x)\mapsto a$
   defines a linear map $\tL:\CC^{g}\to \oplus \OS{d_j}{d_j^\prime}$ satisfying
   $\|L(X)\|=\|\tL(X)\|$ for all $X$.
   Similarly, $J:\CC^{g}\to \oplus \OS{d_j}{d_j^\prime}$ is constructed by
   mapping $x\mapsto L(x)\mapsto \psi(a)$. By construction,
 \begin{equation*}
   L(x) =Q \begin{pmatrix} \tL(x) & 0 \\ 0 & J(x) \end{pmatrix} G^*.
 \end{equation*}

   Now if $\Phi: \OB{L} \to \OS{\ell}{\ell^\prime}$ is completely isometric, then,
   $\tau: \OB{\tL}\to \OS{\ell}{\ell^\prime}$ given by $\tau(\tL(x))=\Phi(L(x))$
   is well defined and completely isometric. Consider the following commutative
   diagram:
\[
\xymatrix{
&   \OB{\tL} \ar@{^{(}->}[d]_{\text{c.isom.}}   \ar[drr]^{\tau}
\ar@{^{(}->}@/^-5pc/[ddd]   \\
&   \OB L\ar@{^{(}->}[d]_{\text{inj.env.}} \ar[rr]^{\Phi} && \OS {\ell}{\ell'} \\
&   E \ar@{{}{--}>}[urr]_{\tilde\Phi}               \\
&   \oplus\OS{d_j}{d_j^\prime}\ar@{^{(}->}[u]_{\text{c.isom.}}
\ar@/^-1pc/[uurr]_{\bar\Phi}
  }
\]

Since $E$ is the injective envelope of $\OB L$, $\Phi$ extends to
a completely isometric $\tilde\Phi:E\to\OS {\ell}{\ell'}$ (cf.~Corollary
\ref{cor:essential} and Remark \ref{rem:ess}).
Hence $\bar\Phi:\oplus\OS{d_j}{d_j^\prime}\to\OS {\ell}{\ell'}$, being
the composite of two completely isometric maps, is completely isometric.
Thus by Theorem \ref{thm:structureCI},
there are unitaries $U, V$ such that
$$
\bar\Phi(a)=U \begin{pmatrix} a & 0\\
0& \phi(a)
\end{pmatrix}V^*$$
for some completely contractive $\phi$ and all $a\in\oplus\OS{d_j}{d_j^\prime}$.
 In particular, this holds for all $a\in\OB {\tL}$, that is,
$$
\Phi(L(x))=\tau(\tL(x))=U \begin{pmatrix} \tL(x) & 0\\
0& \phi(\tL(x))
\end{pmatrix}V^*,
$$
finishing the proof.
(Note: along the way we have shown that
$\oplus\OS{d_j}{d_j^\prime}$ is the injective envelope
of $\OB {\tL}$.)
  \end{proof}

A \nonsingular linear pencil $L$ is called \textbf{minimal} if
the concrete injective envelope of $\OB{L}$ is
$\oplus \OS{d_i}{d_i^\prime}$ for some $d_i,d_i'$.
From Theorem \ref{thm:bigDeal} it follows
that every homogeneous linear \nonsingular pencil is equivalent to a minimal one.
Recall the notion of 
a minimal dimensional defining pencil for a ball from Section \ref{sec:main}.

\begin{corollary}\label{cor:minDim}
A homogeneous linear pencil $L$ is
minimal if and only if it is a
minimal dimensional defining pencil for a ball.
\end{corollary}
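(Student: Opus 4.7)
The plan is to derive both directions from Theorem \ref{thm:bigDeal}. For the direction ``minimal dimensional defining pencil $\Rightarrow$ minimal'', let $L$ of size $d'\times d$ satisfy the minimal-dimensional hypothesis. Applying Theorem \ref{thm:bigDeal} yields $L = Q(\tL \oplus J)G^*$ with $\tL$ minimal of size $D' \times D$ and $\cB_L = \cB_{\tL}$. The direct sum forces $D \leq d$ and $D' \leq d'$, while the hypothesis on $L$ applied with the equivalent $\tL$ as competitor gives $dd' \leq DD'$. Hence $D = d$ and $D' = d'$, so $J$ is trivial and $L = Q\tL G^*$. The injective envelope of $\OB L$ is then the unitary conjugate of that of $\OB{\tL}$---a direct sum of full matrix spaces, up to unitary equivalence---so $L$ is minimal.

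For the converse, suppose $L$ is minimal of size $d' \times d$ with injective envelope $\oplus_j \OS{d_j}{d_j'}$, where $\sum d_j = d$ and $\sum d_j' = d'$. Given any nondegenerate $L'$ of size $d_1' \times d_1$ with $\cB_L = \cB_{L'}$, the goal is $dd' \leq d_1 d_1'$. Theorem \ref{thm:bigDeal} applied to $L'$ yields a minimal $\tL'$ of size $D_1' \times D_1$ with $\cB_{L'}=\cB_{\tL'}$ and $D_1 \leq d_1$, $D_1' \leq d_1'$. Since $\cB_L = \cB_{\tL'}$, the assignment $L(x) \mapsto \tL'(x)$ defines a completely isometric map $\psi: \OB L \to \OS{D_1}{D_1'}$. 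Applying the final statement of Theorem \ref{thm:bigDeal} to $\psi$ produces
\[
  \psi(L(x)) = U \begin{pmatrix} \tL(x) & 0 \\ 0 & \phi(\tL(x)) \end{pmatrix} V^*,
\]
where $\tL$ is the minimal pencil associated to $L$ by parts (1)--(3) of that theorem. Because $L$ is already minimal, this $\tL$ has size $d' \times d$, so the right-hand block matrix has at least $d$ columns and $d'$ rows. Thus $D_1 \geq d$ and $D_1' \geq d'$, and therefore $dd' \leq D_1 D_1' \leq d_1 d_1'$.

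The one nontrivial point, used implicitly in both directions, is that when $L$ is itself minimal, the pencil $\tL$ produced by Theorem \ref{thm:bigDeal}(1)--(3) has the same dimensions as $L$ (equivalently, the summand $J$ vanishes). This follows from the construction via Theorem \ref{thm:structureI}: the leftover parameters $s, s'$ there measure precisely the failure of the concrete injective envelope to be a direct sum of full matrix spaces, and thus vanish exactly when $L$ is minimal. Once this bridging observation is in hand, the rest of the argument is just the interleaving of the two size comparisons $D_1 \leq d_1, D_1' \leq d_1'$ (from decomposing $L'$) and $D_1 \geq d, D_1' \geq d'$ (from the complete isometry out of $\OB L$).
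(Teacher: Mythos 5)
Your proof is correct and follows essentially the same route as the paper: one direction comes from the decomposition $L=Q(\tL\oplus J)G^*$ of Theorem \ref{thm:bigDeal} together with the dimension count $D\le d$, $D'\le d'$ versus $dd'\le DD'$, and the other from the structure of completely isometric maps out of $\OB{L}$ (the final statement of Theorem \ref{thm:bigDeal}, i.e., Theorem \ref{thm:structureCI} applied after extending to the injective envelope $\oplus_j\OS{d_j}{d_j'}$). Your explicit bridging observation that $s=s'=0$ (equivalently $J=0$) exactly when $L$ is minimal is left implicit in the paper, but it is the same argument.
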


\begin{proof}
Suppose $\cB_{L'}=\cB_L$ for some $d'\times d$ homogeneous linear
\nonsingular pencil $L'$. Then
$$\Psi:\cR_L\to\OS{d}{d'},\quad
L(x)\mapsto L'(x)$$ is completely isometric.
Since $L$ is minimal, the injective envelope of $\cR_L$ is $\oplus \OS{d_j}{d_j'}$.
Hence $\Psi$ extends to a completely isometric $\Psi:\oplus \OS{d_j}{d_j'}
\to\OS{d}{d'}$ and is thus described by Theorem \ref{thm:structureCI}. There
exist unitaries $U,V$ and a completely contractive $\psi$ such that
$$
\Psi(y)=U \begin{pmatrix} y & 0 \\ 0 & \psi(y) \end{pmatrix} V^*
$$
for $y\in\oplus \OS{d_j}{d_j'}$. Applying this to $y=L(x)$ yields
$$
L'(x)=\Psi(L(x))= U \begin{pmatrix} L(x) & 0 \\ 0 & \psi(L(x)) \end{pmatrix} V^*.
$$
Thus if $L'$ is a minimal dimensional defining pencil for a ball,
$\psi=0$ and $L'(x)=U L(x) V^*$, so $L$ is a minimal dimensional defining pencil for a ball, too.

Conversely, if $L$ is a minimal dimensional defining pencil for a ball, then
by Theorem \ref{thm:bigDeal}, $\cB_L=\cB_{\tL}$ and the size of $\tL$ is
at most that of $L$. By the minimality of $L$, $J=0$ and
so $L$ equals the minimal pencil $\tL$ up to unitaries. Hence
$L$ is minimal.
\end{proof}

The proof shows that a minimal dimensional defining pencil for a ball
is also minimal with respect to $\ell$ and $\ell'$, respectively.

\subsection{Pencil Ball Maps}
In this subsection we present the proof of our main result, Theorem \ref{thm:main}.

\begin{lemma}\label{lem:isoDerivative}
Suppose $L$ is a minimal linear pencil and
let $f:\cB_{L}\to\OS{\ell}{\ell'}$ be a pencil ball map with $f(0)=0$.
Then
$$
f^{(1)}(x)=U
\begin{pmatrix}L(x)&0\\0&\ \phi(L(x))
\end{pmatrix} V^*
$$
for some completely contractive $\phi$ and unitaries $U,V$.
\end{lemma}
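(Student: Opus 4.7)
The plan is to reduce the claim to Theorem \ref{thm:bigDeal} applied to the linear map
$\psi:\cR_L\to\OS{\ell}{\ell'}$ defined by $L(x)\mapsto f^{(1)}(x)$, so the bulk of the work is to verify that $\psi$ is completely isometric. Well-definedness of $\psi$ is immediate: since $L$ is minimal it is nondegenerate, and by Lemma \ref{lem:nonsingular} its coefficients $A_1,\dots,A_g$ are linearly independent. Complete contractivity comes from the operator Schwarz lemma applied to $F(z):=f(zX/\|L(X)\|)$ on $\mathbb D$, which vanishes at $0$ and is contractive-valued; the standard argument ($z\mapsto F(z)/z$ together with the maximum principle) yields $\|f^{(1)}(X)\|\le\|L(X)\|$.

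The heart of the proof is establishing the reverse inequality via a nilpotent ampliation. Given $X\in\bBL(n)$ with $\|L(X)\|=1$, define $\tilde X\in (\CC^{2n\times 2n})^{g}$ by
\[
\tilde X_i=\begin{pmatrix}0&X_i\\ 0&0\end{pmatrix}.
\]
Then $\tilde X_i\tilde X_j=0$ for all $i,j$, so $w(\tilde X)=0$ whenever $|w|\ge 2$. Hence $f(\tilde X)=f^{(1)}(\tilde X)$, and the canonical shuffle shows that $L(\tilde X)$ and $f^{(1)}(\tilde X)$ are unitarily equivalent to $\begin{pmatrix}0 & L(X)\\ 0 & 0\end{pmatrix}$ and $\begin{pmatrix}0 & f^{(1)}(X)\\ 0 & 0\end{pmatrix}$, respectively. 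In particular $\|L(\tilde X)\|=1$, so $\tilde X\in\bBL(2n)$ and the pencil ball map hypothesis forces $\|f(e^{it}\tilde X)\|=1$ almost everywhere. But $f(e^{it}\tilde X)=e^{it}f^{(1)}(\tilde X)$ has $t$-independent norm equal to $\|f^{(1)}(X)\|$, so $\|f^{(1)}(X)\|=1=\|L(X)\|$. Rescaling (and noting that $L(X)=0$ forces $X=0$ by nondegeneracy) extends this to every $X$, so $\psi$ is completely isometric.

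With $\psi$ completely isometric, Theorem \ref{thm:bigDeal} produces unitaries $U,V$ and a completely contractive $\phi$ with $\psi(L(x))=U\bigl(\tL(x)\oplus\phi(\tL(x))\bigr)V^*$. Corollary \ref{cor:minDim} together with minimality of $L$ identifies $\tL$ with $L$ up to unitary equivalence, which may be absorbed into $U$ and $V$, giving the asserted form. The main obstacle is the nilpotent ampliation step: the Schwarz bound alone is too weak to reverse the inequality at a generic boundary point $X$ (operator inner functions can have arbitrarily small value at the origin), but the ampliated tuple $\tilde X$ is engineered so that every term of degree $\ge 2$ in the power series of $f$ vanishes, leaving the boundary modulus condition to isolate and pin down the norm of the linear part.
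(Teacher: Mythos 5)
Your proposal is correct and follows essentially the same route as the paper: the key step in both is the nilpotent ampliation $X_i\mapsto\left(\begin{smallmatrix}0&X_i\\0&0\end{smallmatrix}\right)$, which kills all homogeneous components of degree $\ge 2$ so that the boundary condition pins down $\|f^{(1)}(X)\|=\|L(X)\|$, after which Theorem \ref{thm:bigDeal} (with minimality of $L$) gives the stated form. Your extra Schwarz-lemma step for the contractive direction is harmless but redundant, since homogeneity of $f^{(1)}$ plus the boundary equality already yields the isometry on all of $\cR_L$.
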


\begin{proof}
By definition,
\beq\label{eq:bind}
\|L(X)\|< 1 \; \Rightarrow \; \|f(X)\|\leq 1, \quad
\|L(X)\|= 1 \; \Rightarrow \;\|f((\exp(it)X)\|= 1\text{ for a.e. }t\in\RR.
\eeq
For $X\in\cB_{L}$, $\begin{pmatrix}0&X\\0&0\end{pmatrix} \in \cB_{L}$
so
$$
f(\begin{pmatrix}0&X\\0&0\end{pmatrix} )= \begin{pmatrix}0&f^{(1)}(X)\\
0&0\end{pmatrix}
$$
has norm at most $1$ and is of norm $1$ (a.e.) if $X\in\bBL$.
By linearity, \eqref{eq:bind} implies $\|f^{(1)}(X)\|=\|L(X)\|$
for all $X\in\cB_L$.

Since $L$ is minimal,
$f$ induces
$$h:\OB{L}\to\OS{\ell}{\ell'}, \quad h(L(x))=f(x).
$$
Moreover, $h^{(1)}(L(x))=f^{(1)}(x)$ is a complete
isometry $\OB{L}\to\OS{\ell}{\ell'}$. Theorem \ref{thm:bigDeal}
implies
$$
f^{(1)}(x)=
U
\begin{pmatrix}L(x)&0\\0&\ \phi(L(x))
\end{pmatrix} V^*
$$
for unitaries $U,V$ and a completely contractive $\phi$.
\end{proof}

Before moving on to the general situation we explain the main idea
for the case of the
quadratic homogeneous component of a pencil ball map.

\begin{lemma}\label{lem:2}
Suppose $L$ is a minimal linear pencil and
let $f:\BL \to \OS{\ell}{\ell^\prime}$ be a pencil ball map with $f(0)=0$.
Suppose $f^{(1)}$ is as in Lemma {\rm \ref{lem:isoDerivative}}.
Then $f^{(2)}$ has the form
$$
f^{(2)}(x)= U
\begin{pmatrix}0&0\\0& \star
\end{pmatrix} V^*.
$$
\end{lemma}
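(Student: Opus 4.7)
My plan is to use a one-variable Schwarz-type rigidity argument along complex rays through the origin of $\cB_L$, combined with the block structure of $f^{(1)}$ from Lemma \ref{lem:isoDerivative}. First I would fix $X\in\partial\cB_L$ and consider the analytic map $h\colon\DD\to\CC^{n\ell'\times n\ell}$ defined by $h(z)=f(zX)$. Because $zX\in\cB_L$ iff $|z|<1$ and $f(0)=0$, one has $h(0)=0$, $\|h(z)\|\le1$ on $\DD$, and $\|h(e^{it})\|=1$ for a.e.\ $t$ by the pencil ball map hypothesis. Factoring $h(z)=z\tilde h(z)$, the function $\tilde h$ is analytic on $\DD$ with $\tilde h(0)=A:=f^{(1)}(X)$, and since $\log\|\tilde h\|$ is subharmonic with boundary values $\equiv 0$ a.e., the maximum principle gives $\|\tilde h(z)\|\le 1$ on $\DD$.

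Next comes the key rigidity: for any unit vector $v$ with $\|Av\|=1$, the analytic map $z\mapsto\tilde h(z)v$ has subharmonic squared-norm $\le 1$ attaining $1$ at the interior point $0$, hence will be identically $1$. Parseval's identity on circles of radius $r$ will then force the Taylor expansion $\tilde h(z)v=Av+z\,f^{(2)}(X)v+\cdots$ to collapse to $\tilde h(z)v\equiv Av$; in particular $f^{(2)}(X)v=0$. Invoking Lemma \ref{lem:isoDerivative}, I would write $A=U\bigl(\begin{smallmatrix}L(X)&0\\ 0&\phi(L(X))\end{smallmatrix}\bigr)V^*$ with $\|\phi(L(X))\|\le\|L(X)\|=1$. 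Unit vectors $v$ with $\|Av\|=1$ are precisely those of the form $v=V\bigl(\begin{smallmatrix}v_0\\ v_1\end{smallmatrix}\bigr)$ with $\|L(X)v_0\|=\|v_0\|$ and $\|\phi(L(X))v_1\|=\|v_1\|$; choosing $v_1=0$ and any unit norm-attaining $v_0$ of $L(X)$ will yield $f^{(2)}(X)\,V\bigl(\begin{smallmatrix}v_0\\ 0\end{smallmatrix}\bigr)=0$. The parallel analysis of $\tilde h^*$ will provide, for each $w_0$ attaining $\|L(X)^*\|$, the dual identity $\bigl(\begin{smallmatrix}w_0^*&0\end{smallmatrix}\bigr)U^*f^{(2)}(X)=0$.

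The final step, which I expect to be the main obstacle, is to upgrade these vanishings from the semi-algebraic locus of ``norm-attaining'' pairs to NC polynomial identities holding for \emph{all} $X$ and \emph{all} $v_0\in\CC^{nd}$, $w_0\in\CC^{nd'}$; the resulting identities are equivalent to the vanishing of the $(1,1)$, $(1,2)$, and $(2,1)$ blocks of $U^*f^{(2)}(x)V$, which is the stated form. Both $f^{(2)}(X)V\bigl(\begin{smallmatrix}v_0\\ 0\end{smallmatrix}\bigr)$ and $\bigl(\begin{smallmatrix}w_0^*&0\end{smallmatrix}\bigr)U^*f^{(2)}(X)$ are NC polynomials in their arguments, and the plan would exploit (i) the scale-homogeneity of the premise, (ii) minimality of $L$---which allows $n$ to be enlarged so that every prescribed direction $v_0$ (resp.\ $w_0$) arises as a top singular vector of some $L(X)$ with $\|L(X)\|=1$---and (iii) the fact that the real hypersurface $\partial\cB_L$ has full complex Zariski closure in $(\CC^{n\times n})^g$. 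The Schwarz-type rigidity of the first two paragraphs is essentially standard once the subharmonic setup is in place; the bulk of the difficulty lies in this translation from real-analytic vanishing on the boundary to an NC polynomial identity.
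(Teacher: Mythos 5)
Your first two paragraphs are correct and take a genuinely different, more elementary route than the paper to the pointwise information they deliver: the operator Schwarz lemma gives $\|\tilde h(z)\|\le 1$ on rays, and the subharmonicity/Parseval argument validly yields $f^{(m)}(X)v=0$ for all $m\ge 2$, every $X\in\bBL$, and every maximizing vector $v$ of $f^{(1)}(X)$, together with the dual statement for left maximizing vectors; the identification of such $v$ through the block form of Lemma \ref{lem:isoDerivative} is also right. But the step you yourself flag as the main obstacle is a genuine gap, and the three tools you propose do not close it. What you actually know is that the polynomial $Q(X,v_0)=f^{(2)}(X)(V\otimes I)(v_0\oplus 0)$ vanishes on the set $M=\{(X,v_0):L(X)^*L(X)v_0=\|L(X)\|^2v_0\}$. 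For a generic boundary point $X$ the maximizing space is one-dimensional, so for fixed $X$ you learn almost nothing about the linear map $v_0\mapsto Q(X,v_0)$; everything hinges on varying $X$ and $v_0$ jointly. The defining condition of $M$ involves $L(X)^*$, so $M$ is not a complex subvariety, and Zariski density of $\bBL$ in $(\Mnns)^g$ does not license analytic continuation of an identity that holds only on $M$. Enlarging $n$ also does not work as described: NC power series respect direct sums, so realizing a prescribed $v_0$ as a top singular vector of some $L(X')$ at a larger level gives information about $f^{(2)}(X')$, not about $f^{(2)}(X)$ for the original $X$, and for $X'=X\oplus Y$ the conclusion decouples and returns you to level $n$. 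The implication you need is plausibly true, but it is precisely the hard content of the lemma and is not established by the sketch.

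For contrast, the paper sidesteps the norm-attaining locus entirely. It evaluates $f$ not on rays $zX$ but on nilpotent block tuples $T$ built from $X$ and $\lambda I$, so that $f(T)$ contains the column $\Lambda(X)=\mathrm{col}\bigl(\sum_j f_{1,j}\otimes\lambda X_j,\ f^{(1)}(X)\bigr)$; the boundary-preservation hypothesis forces $\|\Lambda(X)\|=\|L(X)\|$ for \emph{every} $X$ (and every matrix level), so $L(x)\mapsto\Lambda(x)$ is completely isometric on $\OB{L}$. The rigidity is then supplied by the injective-envelope machinery (Lemma \ref{lem:F-map-auto-identity} and Theorems \ref{thm:structureCI}, \ref{thm:bigDeal}): the extension of the identity component of $\Lambda$ to $\oplus\OS{d_j}{d_j^\prime}$ is unique, which forces the extra block rows $J_{j1}$ to vanish identically, with a transposed construction handling the $(1,2)$ block. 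If you want to salvage your approach, you would need either to prove the ``maximizing locus implies identical vanishing'' implication for arbitrary minimal pencils, or to import the complete-isometry rigidity at exactly the point where your argument currently stops.
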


\begin{proof}
  Write the homogeneous linear pencil $L$ as
 \begin{equation*}
    L(x)=\sum_{j=1}^{g} A_j x_j.
 \end{equation*}

  Given a tuple $X\in  \cB_L(n)$,
  define,
 \begin{equation*}
      T_1 = \begin{pmatrix} 0 & \lambda I & 0 \\ 0 & 0 & X_1 \\
             0 & 0 & 0 \end{pmatrix}, \quad
      T_j = \begin{pmatrix} 0 & 0 & 0\\ 0 & 0 & X_j\\
             0 & 0 & 0\end{pmatrix} \quad \text{for }j\geq 2,
 \end{equation*}
    where $\lambda$ will be chosen later.
    For now, it suffices to  note
 \begin{equation*}
    L(T) = \begin{pmatrix} 0 & A_1\otimes \lambda I & 0 \\
                    0 & 0 & L(X) \\
                  0 & 0 & 0 \end{pmatrix},
 \end{equation*}
  and thus $\|L(T)\|\le 1$ if $|\lambda|$ is sufficiently small.
  For use below, observe that
 \begin{equation*}
    T_1 T_j = \begin{pmatrix} 0 & 0 & \lambda X_j \\
                0&0&0\\0&0&0\end{pmatrix}, \quad
    T_k T_j =  0  \quad \text{for }k\geq 2.
 \end{equation*}

  Write $f^{(2)}$, the quadratic part of $f$ as
 \begin{equation*}
   f^{(2)} = \sum_{a,b=1}^{g} f_{a,b}x_a x_b.
 \end{equation*}
  With this notation,
 \begin{equation*}
   f^{(2)}(T)=\begin{pmatrix} 0 & 0 & \sum f_{1,j}\otimes \lambda X_j\\
              0&0&0\\0&0&0\end{pmatrix}
 \end{equation*}
and
 \begin{equation*}
  f(T) =\begin{pmatrix} 0 & f^{(1)}(\lambda I,0,\ldots,0) & \sum f_{1,j}\otimes \lambda X_j\\
            0 & 0 & f^{(1)}(X) \\ 0 & 0 & 0 \end{pmatrix}.
 \end{equation*}
  And further, for $\lambda$ of sufficiently small norm, $f(T)$ is
  a contraction and $\|L(X)\|=1$ implies $\|f(T)\|=1$. Hence,
 \begin{equation*}
  \Lambda(X) = \begin{pmatrix}  \sum f_{1,j}\otimes \lambda X_j\\
            f^{(1)}(X) \end{pmatrix}
 \end{equation*}
 for small enough $\lambda$ satisfies
$$1 = \|L(X)\| = \|f(T)\| \geq \|\Lambda(X)\| \geq \|f^{(1)}(X)\|=
\|L(X)\|.$$
Hence by linearity, $\|\Lambda(X)\| = \|f^{(1)}(X)\|=
\|L(X)\|$ for all $X$.

  Since $L$ is nondegenerate,
 \begin{equation*}
  \Delta: \OB{L}\to \OS{\ell}{2\ell^\prime},\quad L(x) \mapsto \Lambda(x)
 \end{equation*}
 is a well defined linear map.
 Also,
 \begin{equation*}
  \Delta(L(x)) =\begin{pmatrix} J(L(x)) \\ f^{(1)}(x) \end{pmatrix}
 \end{equation*}
  for some linear $J:\OB{L}\to \OS{\ell}{\ell^\prime}$. In the coordinates
  given by $U,V$:
 $$
\Delta(L(x)) =\begin{pmatrix} J_{11}(L(x)) & J_{12}(L(x))\\
J_{21}(L(x)) & J_{22}(L(x)) \\
L(x)&0\\0&\ \phi(L(x))
\end{pmatrix},
 $$
where $\phi$ is a complete contraction.

  Obviously, $\Delta : \OB{L}\to \OS{\ell}{2\ell^\prime}$
  is completely isometric. It thus has a completely
  isometric extension to $\oplus \OS{d_j}{d_j^\prime}$.
  Moreover, since the $(3,1)$ term, $L(x)$, is the identity
  on $\OB L$, the only extension of this term to
  all of $\oplus \OS{d_j}{d_j^\prime}$ is the identity (cf.~Lemma
  \ref{lem:F-map-auto-identity}). It
  now follows that the extension of $J_{j1}$ must be
  zero (cf.~Theorem \ref{thm:bigDeal}).
   Now repeat the argument with $k$ replacing $1$ to
    get 
 \[ f_{j,k}=U
    \begin{pmatrix}0& f_{j,k}^{1,2}\\0&  f_{j,k}^{2,2}
    \end{pmatrix} V^*
 \]
 for all $j,k$.

    To show that $f_{j,k}^{1,2}=0$ for all $j,k$ we simply repeat
the entire argument using
      \[
      T_1 = \begin{pmatrix} 0 & 0 & 0 \\ \lambda I & 0 & 0 \\
             0 & X_1 & 0 \end{pmatrix}, \quad
      T_j = \begin{pmatrix} 0 & 0 & 0\\ 0 & 0 & 0\\
             0 & X_j & 0\end{pmatrix} \quad \text{for }j\geq 2.
             \qedhere \]
 \end{proof}

\begin{proof}[Proof of Theorem {\rm \ref{thm:main}}]
We may replace $L$ by a minimal pencil equivalent to it
and thus assume that $L$ is minimal.
Also, by Lemma \ref{lem:isoDerivative},
$$f^{(1)}(x)=U
\begin{pmatrix}L(x)&0\\0&\ \phi(L(x))
\end{pmatrix} V^*
$$
for unitaries $U,V$ and a complete contraction $\phi$. We will
prove that for $m\geq 2$,
\beq\label{eq:induct}
f^{(m)}(x)=U
\begin{pmatrix}0&0\\0&\ \phi^{(m)}(L(x))
\end{pmatrix} V^*.
\eeq
This holds for $m=2$ by Lemma \ref{lem:2}. Let $m\geq 2$, assume
\eqref{eq:induct} holds up to $m-1$ and write
$$f^{(m)}=\sum_{\substack{\; w\in\langle x\rangle\\ |w|=m}} f_w w, \quad
f_w\in\CC^{\ell'\times \ell}.$$

Fix $i_1,\ldots,i_{m-1}\in\{1,\ldots,\tg\}$ and consider
the coefficient $f_{x_{i_1}\cdots x_{i_{m-1}} x_j}$ of $f$.
To ``isolate'' this coefficient we
construct  block $(m+1)\times (m+1)$ matrices $T_1,\ldots, T_{g}$ as follows.
Each $T_i$ has $X_i$ as its $(m,m+1)$ entry. In addition to that, we put
a $\lambda I$ as the $(k,k+1)$ entry of $T_{i_k}$.
All the other entries not explicitly given above are set to $0$.
(Note that $T_i$ might have several nonzero superdiagonal entries
as the $i_k$ are not necessarily pairwise distinct.)

For example, if $m=4$ and $x_{i_1}x_{i_2}x_{i_3}=x_1x_2x_1$, then
$$
T_2=\begin{pmatrix}
0&0&0&0&0\\
0&0&\la I&0&0\\
0&0&0&0&0\\
0&0&0&0&X_2 \\
0&0&0&0&0
\end{pmatrix},$$
but $i_1=i_3=1$, so $\lambda I$ occurs twice in $T_1$, once as the
$(1,2)$ entry and also as the $(3,4)$ entry:
$$
T_1=\begin{pmatrix}
0&\la I&0&0&0\\
0&0&0&0&0\\
0&0&0&\la I&0\\
0&0&0&0&X_1\\
0&0&0&0&0
\end{pmatrix}.
$$
Thus
$$
T_1T_2T_1=
\begin{pmatrix}
0&0&0&\la^3 I&0\\
0&0&0&0&0\\
0&0&0&0&0\\
0&0&0&0&0\\
0&0&0&0&0
\end{pmatrix}.
$$
More generally,
if $u\in\langle x\rangle$ is of degree $k$, then all
the nonzero entries of $u(T)$ are on the $k$-th superdiagonal.

Let us consider a product $T_{j_1}\cdots T_{j_{m-1}}$ of the
$T_j$'s of length $m-1$. Its
$(1,m)$ entry is
$$
(T_{j_1})_{1,2} (T_{j_2})_{2,3}\cdots (T_{j_{m-1}})_{m-1,m}
$$
and is nonzero if and only if $(T_{j_k})_{k,k+1}\neq 0$ for all
$k=1,\ldots,m-1$. So
the only product of the $T_j$'s of length $m-1$ with a nonzero entry
in the $m$-th column is
$$
T_{i_1}\cdots T_{i_{m-1}}= \begin{pmatrix}
0&\cdots&0&\lambda^{m-1}I&0 \\
\vdots&\ddots &&0&\star \\
\vdots &&\ddots&&0\\
\vdots&&&\ddots&\vdots\\
0&\cdots&\cdots&\cdots&0
\end{pmatrix}.
$$
Likewise the only words that produce a nonzero top right entry
are those of length $m$ that start with
$x_{i_1}\cdots x_{i_{m-1}}$,
e.g.~$x_{i_1}\cdots x_{i_{m-1}} x_j$ produces
$\lambda^{m-1} T_j$ in the top right corner.
Hence the top right entry in $f(T)$ is
$$
\delta=\sum_j f_{x_{i_1}\cdots x_{i_{m-1}}x_j} \otimes \lambda^{m-1} X_j,
$$
so
$$
f(T)= \begin{pmatrix}
0&\star& \cdots & \cdots&
\delta
\\
0 & 0 & \star & \cdots & \star \\
\vdots & \vdots & \ddots & \ddots & \vdots\\
\vdots & \vdots & \vdots & \ddots & f^{(1)}(X)\\
0 & 0 & \cdots & \cdots & 0
\end{pmatrix}.
$$
Here all the entries denoted by
$\star$ come from homogeneous components of $f$ of degree $<m$ and
are of degree $\leq 1$ in the $X_i$.
The entries in the last column are all of degree $=1$ in the $X_i$.
Since $L$ is nondegenerate, $\Delta:\OB L\to \OS{\ell}{m\ell'}$ which
maps $L(X)$ to the last column of $f(T)$ (without the bottom entry $0$)
is a well defined linear
map. Furthermore, $\delta=J(L(X))$ for some linear
$J:\OB L\to\OS{\ell}{\ell'}$.

Assume $X\in\BL$ and $|\lambda|$ is small. Then all the entries $\star$
have norm $<1$ and $\|f(T)\|\leq 1$. Furthermore, $\|L(X)\|=1$ implies
$\|f(T)\|=1$. So
$$1 = \|L(X)\| = \|f(T)\| \geq \|\Delta(L(X))\| \geq \|f^{(1)}(X)\|=
\|L(X)\|.$$
Hence by linearity, $\|\Delta(L(X))\| = \|f^{(1)}(X)\|=
\|L(X)\|$ for all $X$.

The last column of
$f(T)$ in the coordinates given by $U,V$ is
$$
\Delta(L(x))=
\begin{pmatrix}
J_{11}(L(x))&J_{12}(L(x))\\
J_{21}(L(x))&J_{22}(L(x))\\
0 & 0 \\
0 & \star \\
0 & 0 \\
0 & \star \\
\vdots & \vdots \\
L(x) & 0\\
0 & \phi(L(x))
\end{pmatrix},
$$
where $\phi$ is completely contractive.
(Here we have used the induction hypothesis \eqref{eq:induct}
on all the terms of degree between 2 and $n-1$.)
We can now proceed as in the proof of Lemma \ref{lem:2} to conclude
that $J_{j1}(L(x))=0$. Similarly one obtains $J_{12}(L(x))=0$.
All this proves \eqref{eq:induct} and finishes the proof.
\end{proof}

\begin{proof}[Alternative proof of Theorem {\rm \ref{thm:main}}]
We may replace $L$ by the minimal pencil $\tilde L$ equivalent to it
and thus assume that $L$ is minimal.
Also, by Lemma \ref{lem:isoDerivative},
$$f^{(1)}(x)=U
\begin{pmatrix}L(x)&0\\0&\ \phi(L(x))
\end{pmatrix} V^*
$$
for unitaries $U,V$ and a complete contraction $\phi$. We will
prove that for $m\geq 2$,
\beq\label{eq:induc}
f^{(m)}(x)=U
\begin{pmatrix}0&0\\0&\ \phi^{(m)}(L(x))
\end{pmatrix} V^*.
\eeq
This holds for $m=2$ by Lemma \ref{lem:2}.
Consider $f^{(m)}$,
$$f^{(m)}=\sum_{\substack{\; w\in\langle x\rangle\\ |w|=m}} f_w w, \quad
f_w\in\CC^{\ell'\times \ell}.$$
For notational convenience we omit $U, V$ in what follows.

  Fix a word $v\in\langle x\rangle$ of length $m-1$. To show that $f_{vx_k}=0$ for each
  $k$ and hence $f_w=0$ for each word $w$ of length $m$, we shall use
  compressions of the creation operators on Fock space to
  build more elaborate versions of the block matrices in the proof
  of Lemma \ref{lem:2}. To be specific,
let $\mathcal K=\mathcal K_m$ denote the Hilbert space
  with orthonormal basis $\{ u\in\langle x\rangle: 1\le |u|\le m\}$.
  Define $S_j$ on $\mathcal K$ by $S_j u=x_ju$ provided $|u|<m$
  and $S_ju=0$ if $|u|=m$. Note that $S_j^*(x_ju)=u$ and $S_j^*(w)=0$ if
  the word $w\in\mathcal K$ does not begin with $x_j$.

  For $n\in \NN$ define $E_j^{(n)}:\CC^n \to \mathcal K \otimes \CC^n$ by $y \mapsto x_j\otimes y$.

  Given $n$ and a tuple $X \in (\Mnns)^{\tg}$, define, with
  $\lambda>0$ to be chosen later,  matrices
  $T_j$ mapping from $(\mathcal K\otimes \CC^n) \oplus \CC^n \oplus \CC^n$
  to itself
  by
 \begin{equation*}
   T_j =\begin{pmatrix} \lambda S_j \otimes I & \lambda E_j & 0\\
                    0&0&X_j \\ 0&0&0\end{pmatrix}.
 \end{equation*}
 (Here $E_j=E_j^{(n)}$).
  Note that
 \begin{equation*}
    L(T)^* L(T) = \begin{pmatrix}  \star  &\star & 0 \\ \star &\star &0 \\
                       0&0&L(X)^*L(X) \end{pmatrix},
 \end{equation*}
 where all the entries denoted by $\star$ are quadratic in $\lambda$.
  Thus we can choose $\lambda>0$ small enough so that if $\|L(X)\|=1$,
  then $\|L(T)\|=1$ too.

  As an example, let us compute $u(T)$ for $u=x_1x_2x_3$:
  $$
u(T)=T_1T_2T_3=\begin{pmatrix}
\la^3 (S_1S_2S_3\otimes I) & \la^3 (S_1S_2\otimes I)E_3 & \la^2
(S_1\otimes I)E_2X_3 \\
0&0&0\\
0&0&0
\end{pmatrix}
$$
and thus
$$
u(T) ((w\otimes y_1)\oplus y_2\oplus y_3) = \la^3 (S_1S_2S_3 w \otimes y_1) +
\la^3 (S_1S_2 x_2\otimes y_2) + \la^2 (S_1 x_2 \otimes X_3y_3)
$$
for $w\in\cK$ and $y_i\in\CC^n$.
The general calculation along the same lines yields
\begin{equation*}
  f(T)=\begin{pmatrix} \star & \star & \delta \\ 0 &0  & f^{(1)}(X) \\ 0&0&0\end{pmatrix}
  \end{equation*}
for
 \begin{align*}
  \delta &= \sum_k \sum_j \sum_{|u|\le m-2} \lambda^{|u|+1} f_{ux_jx_k} \otimes \big(( u(S)\otimes I ) E_j X_k\big) .
 \end{align*}
By Lemma \ref{lem:isoDerivative}, $\|L(X)\|=\|f^{(1)}(X)\|$ for all
$X$.

  Let $P_v:\mathcal K \otimes \CC^n \to \CC^n$ denote the
  projection $P_v (\sum_u u\otimes y_u) = y_v$.
For $y\in\CC^n$, and $u\in\cK$ with $|u|\leq m-2$, we have
$$P_v(( u(S)\otimes I ) E_j X_k\big) y=
P_v(u(S)x_j\otimes X_k y)= P_v(u x_j\otimes X_k y) =
\begin{cases}
X_k y& \text{if }ux_j=v \\
0 & \text{otherwise.}
\end{cases}
$$
  We extend $P_v$ to the projection
  $\Pi_v:\CC^{d'\times d}\otimes \mathcal K\otimes \CC^n \to \CC^{d'\times d}
  \otimes\CC^n$, $\Pi_v=I\otimes P_v$.
  With this notation,
 \begin{equation*}
      \begin{pmatrix} \Pi_v & 0 & 0\\ 0 & I & 0\end{pmatrix} f(T)
       \begin{pmatrix} 0\\ 0\\ I\end{pmatrix}=
        \begin{pmatrix}  \sum_k \lambda^{m-1} f_{vx_k} \otimes X_k \\ f^{(1)}(X) \end{pmatrix}
        .
 \end{equation*}

 Now proceed as in the proof of Lemma \ref{lem:2}
 to conclude
 \[
 f_{vx_k}(x)=\begin{pmatrix}0&0\\0&\ \phi^{(m)}(L(x))
 \end{pmatrix}
  \]
(in the coordinates given by $U,V$) for some completely contractive
$\phi^{(m)}$.
\end{proof}

\section{More generality}
 \label{sec:more}

In this section we extend the main results presented so far in two directions.
First of all, we use linear fractional transformations to classify
pencil ball maps that do not preserve the origin. For the second generalization
we study pencil ball maps mapping between two pencil balls and
preserving the boundary.

\subsection{Linear fractional transformations}
\label{subsec:linearfrac}
We provide a cursory treatment of linear fractional maps and refer
the reader to \cite[Section 5]{HKMS} for details and proofs.

Let $\Bll:=\{X\in\Mll :\|X\|<1\}$.
For a given $\ell^\prime \times \ell$ scalar matrix $v$ with $\|v\|<1$, define
$\cF_v:\Bll\to \Bll$ by
\beq\label{eq:deffIntro}
 \cF_v (u):=v-(I_{\ell'}-vv^\T )^{1/2}u(I_\ell-v^\T u)^{-1}(I_\ell-v^\T v)^{1/2}.
\eeq
 Of course it must be shown that $\cF_v$ actually takes values
 in $\Bll$; this is done in \cite[Lemma 5.2]{HKMS}.

Linear fractional transformations such
  as $\cF_v$ are common in circuit and system theory,
  since they are associated with energy conserving pieces of a circuit,
  cf.~\cite{Woh}.

Notice that if $\ell=\ell'=1$, then $v$ and $u$ are scalars,
hence
$$ \cF_v (u)= (v-u)(1-u\bar v)^{-1}
= (1-u\bar v)^{-1}(v-u).$$
Now fix $v\in\DD$ and consider the map $\DD\to\CC$,
$
u\mapsto \cF_v (u).
$
This map is a
linear fractional map that maps the unit disc to the unit disc, maps
the unit circle to the unit circle, and maps $v$ to 0.
The geometric interpretation of the map in
(\ref{eq:deffIntro}) is similar: 

\begin{lemma}[Lemma 5.2 in \cite{HKMS}] \label{lemma:multilinfracIntro}
Suppose that  $N\in \mathbb{N}$ and $V\in\Bll(N)$.
\begin{enumerate}[\rm (1)]
\item
$U\mapsto \cF_V(U)$ maps $\Bll(N)$ into itself
with boundary to the boundary.
\item If $U\in\Bll(N)$, then
$\cF_V(\cF_V(U))=U.$
\item $\cF_V(V)=0$ and $\cF_V(0)=V$.
\end{enumerate}
\end{lemma}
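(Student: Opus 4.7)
The plan is to reduce all three assertions to direct algebraic manipulations built around the defect operators
$$D_V := (I_\ell - V^*V)^{1/2}, \qquad D_{V^*} := (I_{\ell'} - VV^*)^{1/2}.$$
Since $\|V\|<1$, both are strictly positive and hence invertible. The single structural fact driving everything is the intertwining relation $V D_V = D_{V^*} V$, which follows from $V(V^*V)=(VV^*)V$ by passing from polynomials in $V^*V$ to their continuous functional calculus (in particular to the square root), together with its adjoint $D_V V^* = V^* D_{V^*}$.

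Part (3) is the warm-up and I would do it first. The identity $\cF_V(0)=V$ is immediate from \eqref{eq:deffIntro}. For $\cF_V(V)=0$, substitute $U=V$ and use the intertwining to collapse $D_{V^*} V D_V^{-1}$ to $V$. Next, for part (1), I would establish the Schur-style identity
\begin{equation*}
 I - \cF_V(U)^*\cF_V(U) \;=\; D_V\,(I-U^*V)^{-1}(I-U^*U)(I-V^*U)^{-1}\,D_V.
\end{equation*}
This is proved by expanding the left-hand side from the definition, pushing defect operators across $V$ and $V^*$ with the intertwining, and using the telescoping identity $I + B(I-B)^{-1} = (I-B)^{-1}$ applied to $B=V^*U$ (note $\|V^*U\|<1$ so the resolvent exists). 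Because $D_V$ and $(I-V^*U)^{-1}$ are both invertible, the map $y\mapsto (I-V^*U)^{-1}D_V y$ is a linear bijection, so the quadratic form on the left vanishes on some unit vector iff $I-U^*U$ has nontrivial kernel. This gives both $\cF_V(\Bll(N))\subseteq \Bll(N)$ and the boundary-to-boundary property simultaneously.

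For part (2), writing $W:=\cF_V(U)$, my plan is to first compute the two auxiliary simplifications
\begin{equation*}
 I - V^*W \;=\; D_V\,(I-V^*U)^{-1}\,D_V, \qquad
 D_{V^*}W \;=\; (V-U)(I-V^*U)^{-1}\,D_V,
\end{equation*}
each of which uses the intertwining and the same telescoping identity as before. Substituting these into the defining formula for $\cF_V(W)$ collapses the resolvent factors and leaves $\cF_V(W)=V-(V-U)=U$.

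The main obstacle is purely clerical: one has to keep careful track of which side defect operators sit on, since $D_V$ and $D_{V^*}$ live on different spaces and only commute with $V,V^*$ through the intertwining. Once one commits to systematically moving every $D_V$ past every $V^*$ (and likewise on the left) and to replacing every $V^*U(I-V^*U)^{-1}$ by $(I-V^*U)^{-1}-I$, the calculations for (1) and (2) become essentially mechanical.
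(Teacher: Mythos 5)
Your proposal is correct. Note that the paper itself does not prove this lemma --- it is quoted verbatim from \cite{HKMS} (Lemma 5.2 there), and the text explicitly refers the reader to Section 5 of that paper for the proof --- so there is no in-paper argument to compare against; judged on its own, your route via the defect operators $D_V,D_{V^*}$, the intertwining $VD_V=D_{V^*}V$, and the congruence identity $I-\cF_V(U)^*\cF_V(U)=D_V(I-U^*V)^{-1}(I-U^*U)(I-V^*U)^{-1}D_V$ is the standard and complete way to establish all three parts, and your auxiliary formulas for $I-V^*\cF_V(U)$ and $D_{V^*}\cF_V(U)$ do yield the involutivity in (2) exactly as claimed.
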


\subsection{Classification of pencil ball maps}
 \label{sec:classify}
   General pencil ball maps $f$ -- those where $f(0)$ is not necessarily
   $0$ -- are described using the linear fractional transformation $\cF$.

\begin{corollary}\label{cor:main}
     Suppose $L$ is a \nonsingular homogeneous linear pencil and
     $\tL$ is a minimal dimensional defining pencil for $\cB_L$.
     If $f:\BL \to \OS{\ell}{\ell^\prime}$ is a pencil ball map with
     $\|f(0)\|<1$,
     then   there exists a contraction-valued analytic
     $\tilde{f}:\BL \to \OS{m}{m^\prime}$ such that
\beq\label{eq:bigDeal0fla}
f(x)= \cF_{f(0)}\big(\varphi(x) \big),
\eeq
where
\beq\label{eq:bigDeal1fla}
\varphi(x)=\cF_{f(0)}\big(f(x)\big)=
U
    \begin{pmatrix} \tL(x) & 0\\ 0&\tilde{f}(x)  \end{pmatrix} V^*
 \eeq
 for some $m,m'\in\NN_0$ and
 unitaries $U\in \CC^{\ell'\times \ell'}$ and $V\in\CC^{\ell\times \ell}$.
\end{corollary}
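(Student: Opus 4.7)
The plan is to reduce this corollary to Theorem~\ref{thm:main} by conjugating $f$ with a linear fractional transformation chosen to kill the constant term. I would set
\[
\varphi(x) := \cF_{f(0)}\bigl(f(x)\bigr),
\]
interpreting $\cF_{f(0)}$ at level $n$ as its natural tensor lift $\cF_{f(0)\otimes I_n}$ acting on $\Mll\otimes\Mnns$, and show that $\varphi$ is itself a pencil ball map with $\varphi(0)=0$. Theorem~\ref{thm:main} applied to $\varphi$ then gives \eqref{eq:bigDeal1fla}, and the involutive property of $\cF_{f(0)}$ converts this into \eqref{eq:bigDeal0fla}.

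For analyticity of $\varphi$ on $\BL$, the only potentially singular factor in the definition \eqref{eq:deffIntro} of $\cF_{f(0)}$ is $(I_\ell - f(0)^\T u)^{-1}$. Since $\|f(0)\|<1$ and $\|f(X)\|\leq 1$ for $X\in\BL$, the norm of $f(0)^\T f(X)$ is bounded strictly by $\|f(0)\|<1$, so this inverse exists and depends analytically on $X$; hence $\varphi$ is analytic as a composition of analytic maps.

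Next I would verify that $\varphi$ is a pencil ball map vanishing at the origin. The vanishing is immediate from Lemma~\ref{lemma:multilinfracIntro}(3): $\varphi(0)=\cF_{f(0)}(f(0))=0$. By Lemma~\ref{lemma:multilinfracIntro}(1), the (lifted) map $\cF_{f(0)}$ sends the closed matrix unit ball into itself and carries boundary to boundary, so $\|\varphi\|\leq 1$ on $\BL$, and for any $X\in\bBL$ the hypothesis $\|f(\exp(it)X)\|=1$ a.e.\ forces $\|\varphi(\exp(it)X)\|=1$ a.e.\ as well.

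Finally, Theorem~\ref{thm:main} applied to $\varphi$ yields \eqref{eq:bigDeal1fla} verbatim. To recover $f$, apply $\cF_{f(0)}$ to both sides of $\varphi=\cF_{f(0)}\circ f$; by Lemma~\ref{lemma:multilinfracIntro}(2), $\cF_{f(0)}\circ\cF_{f(0)}=\mathrm{id}$, which immediately produces $f=\cF_{f(0)}\circ\varphi$ and establishes \eqref{eq:bigDeal0fla}. There is no genuine obstacle here; the mild bookkeeping point is to keep the tensor-lifting of $\cF_{f(0)}$ consistent across matrix levels and to interpret the boundary values via Fatou's theorem, as already built into the definition of pencil ball map.
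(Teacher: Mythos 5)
Your proposal is correct and is exactly the intended argument: the paper states Corollary \ref{cor:main} without a written proof, relying implicitly on the reduction via $\cF_{f(0)}$ and Lemma \ref{lemma:multilinfracIntro} (quoted from \cite[Section 5]{HKMS}) to pass to $\varphi=\cF_{f(0)}\circ f$ with $\varphi(0)=0$ and then invoke Theorem \ref{thm:main}. Your verification that $\varphi$ is an analytic, contraction-valued, boundary-preserving map (via $\|f(0)\otimes I_n\|=\|f(0)\|<1$ and the involution property) matches the paper's intent.
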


\subsection{Pencil ball to pencil ball maps}
 \label{sec:classifyPencilBall}
 Suppose $L,L'$ are homogeneous linear pencils. An analytic map
 $f:\cB_{L}\to\cB_{L'}$ is a {\bf pencil ball to pencil ball map} if
 $f(\partial\cB_{L})\subseteq \partial\cB_{L'}$.

\begin{corollary}\label{cor:pencil1}
    Suppose $L$ is a \nonsingular homogeneous linear pencil,
     $\tL$ is a minimal dimensional defining pencil for $\cB_L$,
    and let
    $L'$ be an arbitrary homogeneous linear pencil with $\CC^{\ell'\times \ell}$
    coefficients.
     If $f:\BL \to \cB_{L'}$ is a pencil ball to pencil ball map with
     $f(0)=0$, there exists
          a contraction-valued analytic
     $\tilde{f}:\BL \to \OS{m}{m^\prime}$ such that
 \begin{equation*}
    (L'\circ f)(x) =U \begin{pmatrix} \tL(x) & 0\\ 0&\tilde{f}(x)  \end{pmatrix} V^*
 \end{equation*}
 for some $m,m'\in\NN_0$ and unitaries $U\in \CC^{\ell'\times \ell'}$ and $V\in\CC^{\ell\times \ell}$.
\end{corollary}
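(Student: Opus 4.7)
The plan is to reduce the claim to a direct application of Theorem \ref{thm:main} by composing with $L'$. Set $F:=L'\circ f$. The key observation is that $F$ is itself a pencil ball map $\cB_L\to\OS{\ell}{\ell'}$ fixing the origin, so Theorem \ref{thm:main} applied to $F$ (together with the minimality of $\tL$) delivers the formula verbatim.

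First I would check that $F$ is a well-defined analytic map $\cB_L\to\OS{\ell}{\ell'}$. Since $f$ is analytic and $L'$ is a homogeneous linear pencil (in particular, an NC polynomial), the composition is analytic. For $X\in\cB_L(n)$ we have $f(X)\in\cB_{L'}(n)$, hence $L'(f(X))\in(\Mnns)^{\ell'\times\ell}=\OS{\ell}{\ell'}(n)$, with $\|L'(f(X))\|<1$. Thus $F$ is contraction-valued on $\cB_L$. Homogeneity of $L'$ gives $F(0)=L'(f(0))=L'(0)=0$.

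Next I would verify the boundary condition. For $X\in\bBL(n)$, the Fatou-type boundary interpretation built into the definition of ``pencil ball to pencil ball map'' (as in Section \ref{sec:formal} for the pencil ball map definition) says that $f(\exp(it)X)\in\partial\cB_{L'}(n)$ for almost every $t\in\RR$, i.e.\ $\|L'(f(\exp(it)X))\|=1$ a.e. Equivalently, $\|F(\exp(it)X)\|=1$ a.e., which is precisely the pencil ball map condition for $F$.

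Having established that $F=L'\circ f$ is a pencil ball map $\cB_L\to\OS{\ell}{\ell'}$ with $F(0)=0$, I would invoke Theorem \ref{thm:main} to produce unitaries $U\in\CC^{\ell'\times\ell'}$, $V\in\CC^{\ell\times\ell}$, integers $m,m'\in\NN_0$, and a contraction-valued analytic $\tilde f:\cB_L\to\OS{m}{m'}$ with
\[
(L'\circ f)(x)=F(x)=U\begin{pmatrix}\tL(x)&0\\0&\tilde f(x)\end{pmatrix}V^*,
\]
which is exactly the asserted formula. I do not expect a real obstacle here: the only thing to be careful about is the formal translation of the boundary condition on $f$ into the a.e.\ norm-one radial condition defining a pencil ball map, after which the result is just Theorem \ref{thm:main} applied to $F$.
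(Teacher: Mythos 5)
Your proposal is correct and matches the paper's intended argument: the paper leaves Corollary \ref{cor:pencil1} without an explicit proof, but the statement of Corollary \ref{cor:pencil2} (where $\cF_{L'\circ f(0)}(L'\circ f(x))$ is declared to be a pencil ball map ``therefore completely described by Theorem \ref{thm:main}'') shows that the intended route is exactly yours, namely observing that $L'\circ f$ is a pencil ball map $\cB_L\to\OS{\ell}{\ell'}$ fixing the origin and applying Theorem \ref{thm:main}. Your care with the Fatou-type reading of the boundary condition is the right way to make the reduction rigorous.
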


\begin{corollary}\label{cor:pencil2}
    Suppose $L$, $L'$ are \nonsingular homogeneous linear pencils.
     If $f:\BL \to \cB_{L'}$ is a pencil ball to pencil ball map,
     then
\beq\label{eq:bigDeal3fla}
(L'\circ f)(x) = \cF_{L'\circ f(0)} \big(\varphi(x)\big),
\eeq
where
\beq\label{eq:bigDeal4fla}
\varphi(x)=\cF_{L'\circ f(0)}\big(L'\circ f(x)\big)
\eeq
is a pencil ball map $\cB_L\to\OS {\ell}{\ell'}$ mapping $0$ to $0$ and is therefore completely
described by Theorem {\rm\ref{thm:main}}.
\end{corollary}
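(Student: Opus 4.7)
The plan is to reduce the corollary to Corollary~\ref{cor:main} by setting $g := L' \circ f$ and verifying that $g$ is a pencil ball map $\cB_L \to \OS{\ell}{\ell'}$ in the sense of Section~\ref{sec:formal}.

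First I would check that $g$ is an analytic contraction-valued map on $\cB_L$. Analyticity is immediate since $f$ is analytic and $L'$ is (homogeneous) linear. Because $f:\cB_L\to \cB_{L'}$, for every $n$ and $X\in\cB_L(n)$ we have $f(X)\in\cB_{L'}(n)$, i.e.\ $\|L'(f(X))\|<1$, so $\|g(X)\|<1$. Next I would verify the boundary condition: given $X\in\bBL(n)$, the slice function $g_X(z)=g(zX)=L'(f(zX))$ is analytic on $\mathbb D$ and contraction-valued, so by Fatou its nontangential boundary values exist a.e. Since $f$ is a pencil ball to pencil ball map, $f(\exp(it)X)\in\partial\cB_{L'}(n)$ for a.e.\ $t$, which by definition of $\cB_{L'}$ means $\|L'(f(\exp(it)X))\|=1$, i.e.\ $\|g(\exp(it)X)\|=1$ a.e. Hence $g$ satisfies the defining property of a pencil ball map.

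Second, I would apply Corollary~\ref{cor:main} to $g$. Note that $\|g(0)\|=\|L'(f(0))\|<1$ since $f(0)\in\cB_{L'}$, so the hypothesis of Corollary~\ref{cor:main} is met. The conclusion is that
\begin{equation*}
   g(x)=\cF_{g(0)}\bigl(\varphi(x)\bigr),\qquad \varphi(x)=\cF_{g(0)}\bigl(g(x)\bigr),
\end{equation*}
where $\varphi:\cB_L\to\OS{\ell}{\ell'}$ is a pencil ball map with $\varphi(0)=0$ admitting the explicit form \eqref{eq:bigDeal1fla}. Substituting $g=L'\circ f$ gives exactly \eqref{eq:bigDeal3fla} and \eqref{eq:bigDeal4fla}. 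Since $\varphi$ is a pencil ball map fixing the origin, Theorem~\ref{thm:main} applies to give its complete structure.

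The only subtle point, and the one step I would spell out carefully, is the boundary verification in the first paragraph: the hypothesis $f(\partial\cB_L)\subseteq\partial\cB_{L'}$ must be interpreted via Fatou boundary values along each radial slice $z\mapsto f(zX)$, and one needs that composing with the (continuous, linear) map $L'$ commutes with taking these a.e.\ nontangential limits. This is standard for bounded analytic functions into matrix spaces, since $L'$ is linear and bounded, so a.e.\ convergence of $f(zX)$ as $z\to\exp(it)$ nontangentially yields a.e.\ convergence of $L'(f(zX))$ to $L'(f(\exp(it)X))$. Once this is in place, the rest of the argument is a direct invocation of Corollary~\ref{cor:main} and Theorem~\ref{thm:main}.
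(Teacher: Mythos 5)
Your proposal is correct and follows exactly the route the paper intends (the paper leaves this corollary without an explicit proof): set $g=L'\circ f$, check via the nondegeneracy of $L'$ and Fatou boundary values that $g$ is a pencil ball map $\cB_L\to\OS{\ell}{\ell'}$ with $\|g(0)\|<1$, and then invoke Corollary~\ref{cor:main} together with Lemma~\ref{lemma:multilinfracIntro}. Your explicit attention to the fact that the a.e.\ nontangential limits commute with the bounded linear map $L'$ is the only point that needed care, and you handled it correctly.
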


It is clear that converses of Corollaries \ref{cor:pencil1} and \ref{cor:pencil2}
hold as well.

As a last result we show that origin-preserving
\emph{scalar}
analytic self-maps of $\cB_L$ are trivial.

\begin{corollary}\label{cor:autoPencil}
 Suppose $L$ is a \nonsingular homogeneous linear pencil.
     If $f:\BL \to \cB_{L}$ is a pencil ball to pencil ball map
with scalar coefficients and $f(0)=0$,
     then $f$ is linear.
\end{corollary}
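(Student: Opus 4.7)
The plan is to apply Corollary \ref{cor:pencil1} with $L'=L$ and then extract linearity from a homogeneous-degree comparison, using nondegeneracy of $L$ together with the scalar-coefficient hypothesis.

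First I would pass to a minimal equivalent of $L$. By Corollary \ref{cor:minDim}, there is a minimal dimensional defining pencil $\tilde L$ for $\cB_L$; since $\cB_L = \cB_{\tilde L}$ and the hypotheses and conclusion of the corollary refer only to the domain $\cB_L$ and to $f$, we may replace $L$ by $\tilde L$ and assume at the outset that $L$ is itself minimal, with coefficient space $\CC^{d'\times d}$.

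Next, apply Corollary \ref{cor:pencil1} with $L' = L$. It yields
\begin{equation*}
(L\circ f)(x) = U \begin{pmatrix} L(x) & 0 \\ 0 & \tilde f(x) \end{pmatrix} V^*
\end{equation*}
for some $m,m'\in\NN_0$, unitaries $U\in\CC^{d'\times d'}$, $V\in\CC^{d\times d}$, and a contraction-valued analytic $\tilde f:\cB_L\to\OS{m}{m'}$. Since the outer coefficient dimensions on both sides are $d'\times d$, and the minimality of $L$ means that the $\tilde L$-block is already of size $d'\times d$, matching dimensions forces $m=m'=0$. Consequently the $\tilde f$-block is absent and
\begin{equation*}
(L\circ f)(x) = U\, L(x)\, V^*,
\end{equation*}
which is a homogeneous linear function of $x$.

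To conclude, I would expand $f$ into its homogeneous components $f = f^{(1)} + f^{(2)} + \cdots$ (the constant term is $0$ because $f(0)=0$). The hypothesis that $f$ has scalar coefficients guarantees that for each $X\in(\Mnns)^g$ and each $k$, $f^{(k)}(X)$ lies in $(\Mnns)^g$ and is homogeneous of degree $k$ in the entries of $X$. Hence
\begin{equation*}
U L(x) V^* = L(f(X)) = \sum_{k\geq 1} L\bigl(f^{(k)}(X)\bigr),
\end{equation*}
where the $k$-th summand on the right is homogeneous of degree $k$. Matching homogeneous components yields $L(f^{(k)}(X))=0$ for every $X$ and every $k\geq 2$. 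Nondegeneracy of $L$ (Lemma \ref{lem:nonsingular}) then forces $f^{(k)}(X)=0$ for all $X$, so $f^{(k)}$ vanishes identically for $k\geq 2$ and $f=f^{(1)}$ is linear.

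The only point requiring care, and the main potential obstacle, is the dimension bookkeeping in the second step that eliminates the $\tilde f$-summand; this is settled transparently by Lemma \ref{lem:minDim} and Corollary \ref{cor:minDim} once we have reduced to the minimal case.
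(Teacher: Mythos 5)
Your proposal is correct and follows essentially the same route as the paper: reduce to a minimal pencil, apply Corollary \ref{cor:pencil1} with $L'=L$, kill the $\tilde f$-block by comparing dimensions, and then use nondegeneracy of $L$ to pass from $(L\circ f)(x)=UL(x)V^*$ to linearity of $f$. Your final step (expanding into homogeneous components and matching degrees) is just a welcome elaboration of the paper's terse closing sentence.
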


\begin{proof}
We may assume without loss of generality $L$ is minimal.
Then by Corollary \ref{cor:pencil1}, 
$$
(L\circ f)(x)= U \begin{pmatrix} L(x) & 0\\ 0&\tilde{f}(x)  \end{pmatrix} V^*
$$
 for some unitaries $U, V$, and contraction-valued analytic
$\tilde f:\cB_L\to\OS {m}{m'}$. Comparing dimensions we see
$m'=m=0$, i.e., there is no $\tilde f$. Hence
$$
(L\circ f)(x)= U L(x) V^*.
$$
Since $L$ is \nonsingular this implies $f$ is linear.
\end{proof}

\section*{Acknowledgments}
The authors thank an anonymous referee for his or her comments.

\end{document}